\tikzset{cross/.style={cross out, draw=black, minimum size=2*(#1-\pgflinewidth), inner sep=0pt, outer sep=0pt},
  cross/.default={2pt}}
\newtheorem{thm}{Theorem}[section]
\newtheorem{cor}[thm]{Corollary}
\newtheorem{lem}[thm]{Lemma}
\newtheorem{prop}[thm]{Proposition}
\newtheorem*{prob*}{Problem}
\newtheorem*{thm*}{Theorem}
\theoremstyle{definition}
\newtheorem*{defn*}{Definition}
\newtheorem{rem}[thm]{Remark}
\newtheorem*{rem*}{Remark}
\numberwithin{equation}{section}
\newcommand{\C}{\mathbb C}
\newcommand{\R}{\mathbb R}
\DeclareMathOperator{\vp}{v.p.}
\DeclareMathOperator{\crit}{crit}
\DeclareMathOperator{\Res}{Res}
\newcommand{\re}{\mathop{\mathrm{Re}}}
\newcommand{\im}{\mathop{\mathrm{Im}}}
\newcommand{\Tr}[1]{\mathrm{Tr}\, #1}
\newcommand{\op}[1]{\mathrm{#1}}
\newcommand{\fdet}[2][]{{\mathrm{det}}_{#1}\!\left(#2\right)}
\newcommand{\mexp}[2][]{\mathbb E_{#1}\! \left[#2\right]}
\newcommand{\prob}[2][]{\mathbb P_{#1}\! \left\{#2\right\}}
\newcommand{\myexp}[1]{\exp{\!\left( {#1} \right)}}
\newtheoremstyle{prbl}
{}                                   
{}                                   
{\itshape}                           
{}                                   
{\bfseries}                          
{}                                   
{ }                                  
{\thmname{#1}\thmnote{ #3}}          
\theoremstyle{prbl}
\newtheorem{problem}{Problem}
\def\namedlabel#1#2{\begingroup
  \def\@currentlabel{#2}%
  \label{#1}\endgroup
}
\begin{document}
\title[Gap probabilities for products of random matrices]
{\bf{Gap probability for products of random matrices\\ in the critical regime}}
\author{Sergey Berezin}
\address{Department of Mathematics, The Hebrew University of Jerusalem, Givat Ram, Jerusalem 91904, Israel; V.A. Steklov Mathematical Institute of RAS, 27 Fontanka 191023, St. Petersburg, Russia.}\email{servberezin@yandex.ru, serberezin@math.huji.ac.il}
\author{Eugene Strahov}
\address{Department of Mathematics, The Hebrew University of Jerusalem, Givat Ram, Jerusalem 91904, Israel}
\email{strahov@math.huji.ac.il}
\keywords{Products of  random matrices, gap probabilities, Riemann--Hilbert problems, determinantal point processes, singular value statistics}
\commby{}
\begin{abstract}
  The singular values of a product of~$M$ independent Ginibre matrices of size~$N\times N$ form a determinantal point process. Near the soft edge, as both~$M$ and~$N$ go to infinity in such a way that~$M/N\to \alpha$, $\alpha>0$, a scaling limit emerges.  We consider a gap probability for the corresponding limiting determinantal process, namely, the probability that there are no particles in the interval~$(a,+\infty)$. We derive a Tracy--Widom-like formula in terms of the unique solution of a certain matrix Riemann--Hilbert problem of size~$2 \times 2$. The right-tail asymptotics for this solution is obtained by the Deift--Zhou non-linear steepest descent analysis.
\end{abstract}

\maketitle
\section{Introduction}
Studying gap probabilities for determinantal and Pfaffian point processes is of interest to several fields of mathematics, such as random matrix theory, combinatorics, statistical mechanics, and representation theory. Gap probabilities quantify how likely it is to observe no particles in a given interval of the real line.

In the scope of random matrix theory, the results for gap probabilities, exact and asymptotic, can be found in Deift, Its, and Krasovsky~\cite{DeiftItsKrasovsky}, in Forrester~\cite[Chapters 8, 9]{Forrester}, in Forrester and Witte~\cite{ForresterWitteI, ForresterWitteII}, in Krasovsky~\cite{Krasovsky}, and in Tracy and Widom~\cite{TracyWidomI,TracyWidomII, TracyWidomIII, TracyWidomIV}; see also references therein. Applications of gap probabilities and related quantities to combinatorics and statistical mechanics are discussed in Deift~\cite{Deift} and in Baik, Deift, and Suidan~\cite{BaikDeiftSuidan}. In the context of representation theory of the infinite symmetric and unitary groups, the gap probabilities are studied in Borodin and Deift~\cite{BorodinDeift} and in Deift, Krasovsky, and Vasilevska~\cite{DeiftKrasovskyVasilevska}.

In this paper we consider point processes formed by singular values of the products of i.i.d. complex Gaussian matrices with i.i.d. entries. The determinantal structure of such processes was first established in Akemann and Burda~\cite{AkemannBurda}, in Akemann, Kieburg, and Wei~\cite{AkemannKieburgWei}, and in Akemann, Ipsen, and Kieburg~\cite{AkemannIpsenKieburg}. Originally, the corresponding correlation kernels were given in terms of the Meijer G-functions; later on, an alternative double contour integral representation was discovered by Kuijlaars and Zhang~\cite{KuijlaarsZhang}.

Scaling limits of the kernels related to the products of random matrices produces a variety of new limiting kernels and thus a variety of new determinantal point processes. A manifestation of the universality and fundamental importance of these determinantal processes lies in the fact that they are also scaling limits of combinatorial models without a priori relation to random matrices, much like it is the case for the classical Airy, Bessel, and sine processes. For further information see Ahn~\cite{Ahn}, and Borodin, Gorin and Strahov~\cite{BorodinGorinStrahov}.

We are concerned with the scaling limit near the soft edge under the condition that both the matrix size and the number of factors in the product approach infinity in such a way that their ratio converges to a positive number. The study of this asymptotic regime was initiated by Akemann, Burda, and  Kieburg in~\cite{AkemannBurdaKieburg1, AkemannBurdaKieburg2}; the scaling limit of the corresponding kernel was obtained by Liu, Wang, and Wang~\cite{LiuWangWang}. Following the latter paper, we refer to this limiting kernel as to the~\textit{critical kernel} and to the corresponding process as to the~\textit{critical determinantal process}. We examine a special case of the gap probability for the critical process, the probability of having no particles in the interval~$(a,+\infty)$.

The general fact of the matter is that gap probabilities of determinantal processes can be represented as Fredholm determinants of trace-class operators. Moreover, if the operators involved are of special~\textit{integrable form} (see~Its, Izergin, Korepin, and Slavnov~\cite{ItsIzerginKorepinSlavnov}), then one can relate the determinants to the Riemann--Hilbert problems. For the classical determinantal processes this leads to the theory of Painlev\'{e} transcendents (e.g., see Baik, Deift, and Suidan~\cite[Section 6.5]{BaikDeiftSuidan}, and Deift \cite{Deift}). Having said that, we emphasize that the critical kernel is not of the integrable form, and thus the standard techniques cannot be applied.

Instead, we use an alternative approach based on the earlier work by Bertola and Cafasso~\cite{BertolaCafasso}, and later by Girotti~\cite{Girotti1, Girotti2, Girotti3}. This approach was subsequently used for studying determinantal point processes related to products of random matrices in Claeys, Girotti, and Stivigny~\cite{ClaeysGirottiStivigny}, and in Charlier, Lenells, and Mauersberger~\cite{CharlierLenellsMauersbergerI, CharlierLenellsMauersbergerII}. The core idea is to show that the Fredholm determinant of the non-integrable operator can be replaced by that of an integrable operator. Then, we can evaluate the gap probability in terms of the unique solution of a Riemann--Hilbert problem and, by means of the non-linear steepest descent method of Deift and Zhou~\cite{DeiftZhou}, derive the right-tail asymptotics for this solution (see Theorem~\ref{TheoremMainResult} below).

The organization of the rest of the paper is as follows. In \textbf{Section~\ref{Section2}}, we describe determinantal processes related to the products of random matrices. In particular, the critical determinantal process is defined and the formula for the critical kernel is presented, as stated in Liu, Wang, and Wang~\cite{LiuWangWang}. In \textbf{Section~\ref{Section3}}, we formulate Proposition~\ref{PropositionTraceClass} and Theorem~\ref{TheoremMainResult}. The proposition says that after conjugating by a positive function, the critical kernel becomes a kernel of a trace-class operator on~$L_2(a,+\infty)$; this, in particular, ensures that the corresponding gap probability can be written as a Fredholm determinant. Theorem~\ref{TheoremMainResult} is the main result of our paper. It gives a Tracy--Widom-like formula for the gap probability in terms of the unique solution of a certain $2\times2$-matrix Riemann--Hilbert problem and the right-tail asymptotics of this solution. In \textbf{Section~\ref{Section4}}, we discuss Theorem~\ref{TheoremMainResult} and earlier related works. In particular, we compare formulas of Theorem~\ref{TheoremMainResult} with those of Tracy and Widom~\cite{TracyWidomI} for the Airy kernel. \textbf{Sections~\ref{Section5}--\ref{Section9}} present the proofs. In \textbf{Section~\ref{Section5}}, we establish Proposition~\ref{PropositionTraceClass}. In \textbf{Section~\ref{Section6}} we show that the gap probability can be written as a Fredholm determinant of an integrable operator. In \textbf{Section~\ref{Section7}} we represent the gap probability in terms of a matrix Riemann--Hilbert problem of size~$2\times2$. In \textbf{Section~\ref{Section8}}, we carry out the asymptotic analysis of this problem. Finally, in \textbf{Section~\ref{Section9}} we proof Theorem~\ref{TheoremMainResult}.    
\\[2ex]
\textbf{Acknowledgments.}
We are grateful to Christophe Charlier for helpful discussions and for pointing out related references. We also thank the anonymous referees for carefully reviewing the manuscript and for giving us constructive and useful remarks. This work is supported by the BSF grant 2018248 ``Products of random matrices via the theory of symmetric functions''.


\section{Determinantal processes related to products of random matrices}
\label{Section2}
We begin by describing the relevant determinantal point processes. Let~$X_1, X_2, \ldots, X_M$ be i.i.d. Ginibre matrices; their entries are i.i.d. standard complex Gaussian random variables. Assume that the~$X_j$ are all of the same size~$N\times N$. The squared singular values of~$\Pi_{N,M}= X_MX_{M-1}\cdot\ldots \cdot X_1$ form a determinantal process~$\mathscr{X}_{N,M}$, as shown in Akemann, Kieburg, and Wei~\cite{AkemannKieburgWei} and in Akemann, Ipsen, and Kieburg~\cite{AkemannIpsenKieburg}. The correlation kernel of~$\mathscr{X}_{N,M}$ has the following contour integral representation (see Kuijlaars and Zhang~\cite[Proposition 5.1.]{KuijlaarsZhang}),
\begin{equation}
  \label{KZKERNELFINITE}
  K_{N,M}(x,y)=\frac{1}{(2\pi i)^2}\int\limits_{-\frac{1}{2}-i\infty}^{-\frac{1}{2}+i\infty}ds
  \int\limits_{\Sigma_N}dt\left(\frac{\Gamma(s+1)}{\Gamma(t+1)}\right)^{M+1}\frac{\Gamma(t-N+1)}{\Gamma(s-N+1)} \frac{x^ty^{-s-1}}{s-t},\quad x,y>0,
\end{equation}
where~$\Sigma_N$ is a simple closed contour encircling~$0,1,\ldots,N-1$ in the positive direction and is such that~$\re{t}>-1/2$ for~$t\in\Sigma_N$.

For convenience, we transform~$\mathscr{X}_{N,M}$ and consider the process~$\tilde{\mathscr{X}}_{N,M}$ formed by the eigenvalues of~$\log{(\Pi_{N,M}^* \Pi_{N,M})}$. It is not hard to see that this process is also determinantal and that its correlation kernel~$\tilde{K}_{N,M}(x,y)$ can be evaluated in terms of~$K_{N,M}(x,y)$. After simple manipulations with the contours, one finds that (see Liu, Wang, and Wang~\cite[Section 1]{LiuWangWang})
\begin{equation}
  \label{init_kern}
  \tilde{K}_{N,M}(x,y) =\frac{1}{(2\pi i)^2}\int \limits_{\tilde{\gamma}}ds \int \limits_{\gamma} dt  \left(\frac{\Gamma{(s + N)}}{\Gamma{(t+N)}}\right)^{M+1}\frac{\Gamma{(t)}}{\Gamma{(s)}}
  \frac{e^{x t- y s}}{s-t} , \quad x,y \in \mathbb{R},
\end{equation}
where~$\gamma$ and~$\tilde{\gamma}$ are shown in Fig.~\ref{contours_init}.
\begin{figure}[ht!]
  \centering
  \begin{tikzpicture}[scale=2.5]
    \begin{scope}[compass style/.style={color=black}, color=black, decoration={markings,mark= at position 0.5 with {\arrow{stealth}}}]
      \draw[->] (-0.75,0) -- (1,0);
      \draw (-1.8,0) -- (-1.25,0);
      \draw[->] (0,-0.6) -- (0,0.6);

      \draw[thick, postaction=decorate] (0.5,-0.6) -- (0.5,0.6);

      \draw[thick, postaction=decorate] (-1.5,-0.25) -- (0,-0.25);
      \draw[thick, postaction=decorate] (0,0.25) -- (-1.5,0.25);
      \draw[thick, postaction=decorate] (0,-0.25)  arc[start angle=-90, end angle=90,radius=0.25];
      \draw[thick, postaction=decorate] (-1.45,0.25)  arc[start angle=90, end angle=270,radius=0.25];

      \node at (-0.05,-0.11) {0};
      \node at (0.56,-0.11) {1};
      \node at (0.58,+0.5) {$\tilde{\gamma}$};
      \node at (-0.5,+0.35) {$\gamma$};
      
      \draw (0,0) node [cross] {};
      \draw (-0.5,0) node [cross] {};
      \node at (-0.52,-0.11) {-1};
      \draw (-1,0) node {\ldots};
      \draw (-1.5,0) node [cross] {};
      \node at (-1.45,-0.11) {-N+1};

    \end{scope}
  \end{tikzpicture}  
  \caption{The contours~$\gamma$ and~$\tilde{\gamma}$ for the kernel~\eqref{init_kern}.}
  \label{contours_init}
\end{figure}
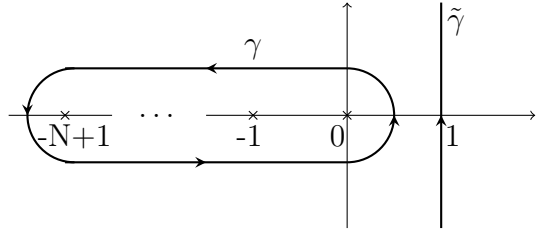
We mention that~$\tilde{\mathscr{X}}_{N,M}$ is also of relevance in the context of dynamical systems; for further information, see \cite[Section 1]{LiuWangWang}.

The starting point of our work is the following theorem. 
\begin{thm}
  Let~$M=M(N)$ be a function of~$N$ such that~$M(N)/N\to\alpha$, $\alpha>0$, as~$N \to \infty$. The following limit exists,
  \begin{equation}
    \label{th21_scaling_softedge}
    \lim\limits_{N\to\infty}\tilde{K}_{N,M(N)}(x+a_N, y+a_N)=K_{\crit}(x,y),
  \end{equation}
  where~$a_N= \left(M(N)+1\right)\left(\log N-1/(2N)\right)$, and the convergence is uniform for~$x$, $y$ in compact subsets of~$\R$. The kernel~$\tilde{K}_{N,M}(x,y)$ is defined in~\eqref{init_kern}, and the limiting kernel~$K_{\crit}(x,y)$ reads
  \begin{equation}
    \label{eq_crit_kern_init}
    K_{\crit}(x,y) =\frac{1}{(2\pi i)^2} \int \limits_{\tilde{\gamma}} ds \int \limits_{\gamma} dt \;\frac{\Gamma{(t)}}{\Gamma{(s)}} \frac{\exp{\Big(\frac{\alpha s^2}{2} - y s\Big)}}{\exp{\Big(\frac{\alpha t^2}{2}-x t\Big)}} \frac{1}{s-t}, \quad x,y\in\mathbb{R},
  \end{equation}
  where~$\gamma$ and~$\tilde{\gamma}$ are set out in Fig.~\ref{contours}.
\end{thm}
\begin{proof}
  See Liu, Wang, and Wang~\cite[Section 2]{LiuWangWang}.
\end{proof}
\begin{figure}[ht!] 
  \centering
  \begin{tikzpicture}[scale=2.5]
    \begin{scope}[compass style/.style={color=black}, color=black, decoration={markings,mark= at position 0.5 with {\arrow{stealth}}}]
      \draw[->] (-1.75,0) -- (1,0);
      \draw[->] (0,-0.6) -- (0,0.6);

      \draw[thick, postaction=decorate] (0.5,-0.6) -- (0.5,0.6);

      \draw[thick, postaction=decorate] (-1.75,-0.25) -- (0,-0.25);
      \draw[thick, postaction=decorate] (0,0.25) -- (-1.75,0.25);
      \draw[thick, postaction=decorate] (0,-0.25)  arc[start angle=-90, end angle=90,radius=0.25];

      \node at (-0.05,-0.11) {0};
      \node at (0.56,-0.11) {1};
      \node at (0.58,+0.5) {$\tilde{\gamma}$};
      \node at (-0.5,+0.35) {$\gamma$};
      
      \draw (0,0) node [cross] {};
      \draw (-0.5,0) node [cross] {};
      \node at (-0.52,-0.11) {-1};
      \draw (-1,0) node [cross] {};
      \node at (-1.02,-0.11) {-2};
      \draw (-1.5,0) node [cross] {};
      \node at (-1.52,-0.11) {-3};

    \end{scope}
  \end{tikzpicture}  
  \caption{The contours~$\gamma$ and~$\tilde{\gamma}$ for the kernel~\eqref{eq_crit_kern_init}.}
  \label{contours}
\end{figure}
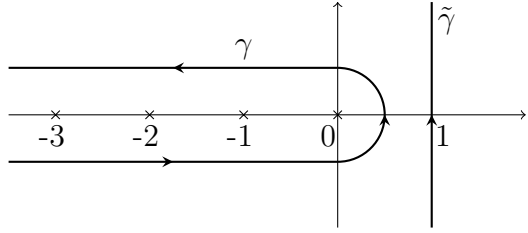
It is not hard to realize that the kernel~$K_{\crit}(x,y)$ defines a determinantal point process~$\mathscr{X}_{\crit}$ on~$\R$. We further refer to~$\mathscr{X}_{\crit}$ and~$K_{\crit}(x,y)$ as to the~\textit{critical determinantal process} and the~\textit{critical kernel}, respectively.

Our interest in~$\mathscr{X}_{\crit}$ is due to the following. First, consider the case such that the number~$M$ of factors in the product~$\Pi_{N,M}$ is fixed. The global behavior of the spectrum of~$(\Pi_{N,M}^* \Pi_{N,M})$ is well understood, and after the appropriate scaling the corresponding counting measure has a limiting density as~$N \to \infty$ (e.g., see Liu, Wang, and Zhang~\cite{LiuWangZhang}). The scaling limit at the soft edge yields the Airy determinantal point process. On the other hand, if both~$M$ and~$N$ approach infinity at an equal rate, a similar scaling limit produces another (critical) process, $\mathscr{X}_{\crit}$, which therefore can be regarded as a deformation of the Airy process. Note that the critical process~$\mathscr{X}_{\crit}$ can also be scaled back into the Airy process (see Liu, Wang, and Wang~\cite[Theorem 3.2]{LiuWangWang}).

\section{Main result}
\label{Section3}
Let~$\mathscr{X}_{\crit}$ be defined as in Section~\ref{Section2}. Set~$\mathcal{P}(a)$ to be the gap probability of having no particles of~$\mathscr{X}_{\crit}$ in the interval~$(a, +\infty)$,
\begin{equation}
  \label{gap_prob_to_find}
  \mathcal{P}(a) = \prob[\mathscr{X}_{\crit}]{\sum_{x \in \mathscr{X}_{\crit}} {1}_{(a, +\infty)}(x) =0}, \quad a \in \mathbb{R},
\end{equation}
where the probability is calculated under the measure induced by~$\mathscr{X}_{\crit}$.

The standard formula (e.g., see Anderson, Guionnet, and Zeitouni~\cite[Lemma 3.2.4]{AndersonGuionnetZeitouni}) for the gap probability is as follows,
\begin{equation}
  \label{Pset}
  \mathcal{P}(a)=1+\sum\limits_{n=1}^{\infty}\frac{(-1)^n}{n!} \int\limits_a^{+\infty}\ldots\int\limits_a^{+\infty} \det\left(K_{\crit}\left(x_j,x_k\right)\right)_{j,k=1}^n\,dx_1\cdot\ldots \cdot dx_n.
\end{equation}
Conjugating~$K_{\crit}(x,y)$ defined in~\eqref{eq_crit_kern_init} by a positive function, we have a new kernel~$K(x,y)$,
\begin{equation}
  \label{Knew}
  K(x,y)=e^{-\frac{(x-y)}{2}}K_{\crit}(x,y).
\end{equation}
Clearly, plugging~$K(x,y)$ in place of~$K_{\crit}(x,y)$ on the right-hand side of~\eqref{Pset} does not change~$\mathcal{P}(a)$.

\begin{prop}
  \label{PropositionTraceClass}
  Let~$a \in \mathbb{R}$ be fixed. The kernel~$K(x,y)$ given by~\eqref{Knew} defines a trace-class operator~$\op{K}|_{L_2(a,+\infty)}$ on~$L_2(a,+\infty)$. Moreover,
  \begin{equation}
    \label{eq:ineq_for_K}
    \left|K(x,y)\right|\leq C \myexp{-\frac{x+y}{2}}, \quad x,y > a,
  \end{equation}
  for some constant~$C>0$.
\end{prop}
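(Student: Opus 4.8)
The plan is to turn the double contour integral representation of~$K$ into a composition of explicit, well-behaved operators, from which both assertions follow. Substituting~\eqref{eq_crit_kern_init} into~\eqref{Knew} and absorbing the conjugating factor $e^{-(x-y)/2}$ into the $x$- and $y$-exponentials gives
\begin{equation*}
  K(x,y)=\frac{1}{(2\pi i)^2}\int_{\tilde\gamma}ds\int_\gamma dt\;\frac{\Gamma(t)}{\Gamma(s)}\,e^{\frac{\alpha}{2}(s^2-t^2)}\,\frac{e^{x(t-1/2)}}{e^{y(s-1/2)}}\,\frac{1}{s-t}.
\end{equation*}
Since $1/\Gamma$ is entire, the $s$-integrand is entire in $s$ and decays like $e^{-\frac{\alpha}{2}(\im s)^2}$, uniformly on vertical strips; I would therefore shift $\tilde\gamma$ to the vertical line $\re s=1$ (no residues are crossed and the horizontal connecting pieces vanish). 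On this line $|e^{-y(s-1/2)}|=e^{-y/2}$, which is the source of the decay in~$y$, while $\re(s-t)=1-\re t>0$ along~$\gamma$ (cf.~Fig.~\ref{contours}). Then, using $\frac{1}{s-t}=\int_0^\infty e^{-u(s-t)}\,du$ and Fubini (the triple integral is absolutely convergent for each fixed $x,y>0$), I obtain $K(x,y)=\int_0^\infty A(x,u)B(u,y)\,du$ with
\begin{equation*}
  A(x,u)=\frac{e^{-x/2}}{2\pi i}\int_\gamma \Gamma(t)\,e^{-\frac{\alpha}{2}t^2}\,e^{(x+u)t}\,dt,\qquad
  B(u,y)=\frac{1}{2\pi i}\int_{\tilde\gamma}\frac{e^{\alpha s^2/2}}{\Gamma(s)}\,e^{-y(s-1/2)}\,e^{-us}\,ds.
\end{equation*}

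Next I would make $A$ and $B$ explicit enough to estimate. Evaluating the $t$-integral by residues — the integrand is holomorphic except for the simple poles of $\Gamma$ at $t=0,-1,-2,\dots$, and the tails of $\gamma$ contribute nothing because $\Gamma(t)$ stays bounded there while $e^{-\alpha t^2/2}$ decays super-exponentially — yields the absolutely convergent series
\begin{equation*}
  A(x,u)=e^{-x/2}\,F(x+u),\qquad F(v)=\sum_{k\ge0}\frac{(-1)^k}{k!}\,e^{-\frac{\alpha}{2}k^2}\,e^{-kv},
\end{equation*}
so that $F$ is bounded on $[0,\infty)$ and $F(v)-1=O(e^{-v})$ as $v\to\infty$. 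For $B$, parametrising $s=1+i\tau$ and using the Stirling bound $|\Gamma(1+i\tau)|^{-1}=O(e^{\pi|\tau|/2})$, which is dominated by the Gaussian $|e^{\alpha s^2/2}|=e^{\alpha/2}e^{-\alpha\tau^2/2}$, I get $|B(u,y)|\le C_B\,e^{-y/2}e^{-u}$ with $C_B<\infty$. Combining the two estimates, $|K(x,y)|\le\int_0^\infty|A(x,u)|\,|B(u,y)|\,du\le C\,e^{-(x+y)/2}$, which is~\eqref{eq:ineq_for_K}.

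For the trace-class statement I would \emph{not} factor $K$ as the single product of the integral operators with kernels $A$ and $B$: since $A(x,u)=e^{-x/2}F(x+u)\to e^{-x/2}$ as $u\to\infty$ carries no decay in~$u$, that operator is not Hilbert--Schmidt. Instead I would peel off the constant $F(+\infty)=1$, writing $F=1+G$ with $|G(v)|\le C_G\,e^{-v}$; this splits $K=K_1+K_2$, where
\begin{equation*}
  K_1(x,y)=e^{-x/2}\int_0^\infty B(u,y)\,du=e^{-x/2}\,b(y)
\end{equation*}
has rank one, with $e^{-x/2}\in L_2(0,\infty)$ and $b\in L_2(0,\infty)$ (indeed $|b(y)|\le C_B e^{-y/2}$), hence is trace class; while $K_2(x,y)=\int_0^\infty\big(e^{-x/2}G(x+u)\big)B(u,y)\,du$ is the composition of the integral operators with kernels $e^{-x/2}G(x+u)$ and $B(u,y)$, both Hilbert--Schmidt on $L_2(0,\infty)$ thanks to $|e^{-x/2}G(x+u)|\le C_G e^{-3x/2}e^{-u}$ and $|B(u,y)|\le C_B e^{-y/2}e^{-u}$, hence also trace class. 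Therefore $K=K_1+K_2$ is trace class on $L_2(0,+\infty)$.

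The genuinely delicate point is the last one: the obvious Hilbert--Schmidt~$\times$~Hilbert--Schmidt factorisation fails, and one must identify and subtract the rank-one contribution created by $F(+\infty)=1$ before the remainder factorises. The other ingredients — the shift of~$\tilde\gamma$, the Fubini interchange, the residue expansion of~$A$ (controlling $\Gamma$ on the tails of~$\gamma$ and on large arcs through $\re t=-(n+1/2)$), and the Stirling bound for~$B$ — are routine estimates that nonetheless need to be written out carefully.
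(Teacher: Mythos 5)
Your proof is correct, but it takes a genuinely different decomposition from the paper's, and a slightly less efficient one. Both proofs use $\frac{1}{s-t}=\int_0^\infty e^{-q(s-t)}\,dq$ to factorize $K(x,y)$ through an integral over $(0,\infty)$. The difference is in how the factor $e^{-q(s-t)}=e^{qt}\,e^{-qs}$ is distributed: the paper rewrites it as $e^{q(t-1/2)}\cdot e^{-q(s-1/2)}$, moving an extra $e^{-q/2}$ into the $t$-factor and an extra $e^{q/2}$ into the $s$-factor, which produces the kernels $G(x,q)=\tfrac{1}{2\pi i}\int_\gamma\Gamma(t)\,e^{-\alpha t^2/2+(x+q)(t-1/2)}\,dt$ and $\tilde G(q,y)=\tfrac{1}{2\pi i}\int_{\tilde\gamma}(\Gamma(s))^{-1}e^{\alpha s^2/2-(y+q)(s-1/2)}\,ds$. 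The residue at $t=0$ then gives $e^{-(x+q)/2}$ (decay in \emph{both} $x$ and $q$), so $G(x,q)=O(e^{-(x+q)/2})$ directly, and both $G$ and $\tilde G$ are Hilbert--Schmidt with no need for a rank-one correction. (The paper splits $\gamma=\gamma_0\cup\gamma_1$ and evaluates the $\gamma_0$ loop by residue while estimating the $\gamma_1$ tail by Stirling, which avoids justifying the full residue series.) Your choice puts all of $e^{-x/2}$ into $A$ and all of $e^{-u}$ into $B$, so $A(x,u)=e^{-x/2}F(x+u)$ indeed fails to be $L_2$ in $u$; your diagnosis and the fix (peel off $F(+\infty)=1$ to get a rank-one term plus an HS\,$\times$\,HS remainder) are sound, so the argument goes through. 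Two cosmetic remarks: on the tails of $\gamma$, by the reflection formula $|\Gamma(t)|$ actually decays, not merely stays bounded, which makes the arcs in your residue expansion vanish trivially; and after shifting $\tilde\gamma$ to $\{\re s=1\}$ you should note that $\re(s-t)>0$ is preserved because $\gamma$ keeps $\re t<1/2$, so the Laplace representation of $(s-t)^{-1}$ remains valid.
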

A proof of this proposition is based on several estimates for the kernel~$K(x,y)$ and will be given later in Section~\ref{Section5}.

From now on, we denote all operators by the same letters as their kernels; however, in the former case we use the Roman typeface style, as opposed to the italics in the latter case.

Proposition~\ref{PropositionTraceClass} implies that~\eqref{Pset} can be written as a Fredholm determinant,
\begin{equation}
  \label{PFredholmFirst}
  \mathcal{P}(a)= \fdet{\op{I}-\op{K}|_{L_2(a,+\infty)}}, \quad a \in \mathbb{R}.
\end{equation}

In order to present our main result we state the following Riemann--Hilbert problem.
\begin{problem}[Y-RH]
  \needspace{5ex}
  \namedlabel{problem_y-rh}{Problem Y-RH}
  \leavevmode
  \begin{enumerate}[label=\textnormal{({\roman*})},ref=Y-RH-{\roman*}]
  \item  \label{yrh_cond1} $Y(z)$ is analytic in~$\mathbb{C} \setminus (\gamma \cup \tilde{\gamma})$;
  \item  \label{yrh_cond2} $Y^+(z) = Y^-(z) J_Y(x),\quad z \in \gamma \cup \tilde{\gamma}$,
    \begin{equation}
      \label{eq_jump_Y}
      J_Y(z) =
      \begin{pmatrix}
        1 &  {1}_{\tilde{\gamma}}(z)(\Gamma{(z)})^{-1} e^{\frac{\alpha z^2}{2} - a z}\\
        - {1}_{\gamma}(z) \Gamma{(z)} e^{-\frac{\alpha z^2}{2} + a z}  & 1
      \end{pmatrix};
    \end{equation}
  \item  \label{yrh_cond3} $Y(z) \to I$ as~$z \to \infty$, $z \in \mathbb{C} \setminus (\gamma \cup \tilde{\gamma})$.
  \end{enumerate}
\end{problem}
By~$Y^{\pm}(z)$ we denote the~$(\pm)$-boundary values of~$Y(z)$ on the contours~$\gamma$ and~$\tilde{\gamma}$ in Fig.~\ref{contours}.

\begin{rem}
  Note that on each of the contours, $\gamma$ and~$\tilde{\gamma}$, the jump matrix~$J_Y(z)$ is triangular.
\end{rem}
\begin{rem}
  Since~$J_Y(z)$ is analytic in a neighborhood of~$\gamma \cup \tilde{\gamma}$, the matrix function~$Y(z)$ is continuous up to the boundary~$\gamma \cup \tilde{\gamma}$, thus the limits~$Y^{\pm}(z)$ and the condition~(\ref{yrh_cond2}) can be understood pointwise (e.g., see Fokas, Its, Kapaev, and Novokshenov~\cite[Chapter 3]{FokasItsKapaevNovokshenov}). 
\end{rem}
Due to the standard argument based on Liouville's theorem the solution of~\ref{problem_y-rh} is unique (e.g., see Deift~\cite[p. 44]{Deift_book}). We will establish the existence in Proposition~\ref{prop_72}.

Now, let~$Y(z)$ be the solution of the Riemann--Hilbert problem stated above, and write
\begin{equation}
  \label{Y1}
  Y(z)=I+\frac{Y_1(a)}{z}+ \frac{Y_2(a)}{z^2} + O\left(\frac{1}{z^3}\right), 
\end{equation}
as~$z \to \infty$, $z \in \mathbb{C} \setminus (\gamma \cup \tilde{\gamma})$. We can present our main result as follows.
\begin{thm}
  \label{TheoremMainResult}
  The gap probability~\eqref{Pset} is given by
  \begin{equation}
    \label{MainExactFormula}
    \mathcal{P}(a)=\fdet{\op{I}-\op{K}|_{L_2(a,+\infty)}}=\myexp{-\int\limits_a^{+\infty}(x-a)u(x)\, dx},
  \end{equation}
  where~$u(x)$ reads
  \begin{equation}
    \label{URH}
    u(x)=-\left(Y_1(x)\right)_{1,2}\left(Y_1(x)\right)_{2,1},
  \end{equation}
  the quantities~$\left(Y_1(x)\right)_{1,2}$ and~$\left(Y_1(x)\right)_{2,1}$ are matrix elements of~$Y_1(x)$ in the asymptotic expansion~\eqref{Y1} of the solution of~\ref{problem_y-rh}.

  Moreover, the function~$u(x)$ has the following right-tail asymptotics,
  \begin{equation}
    \label{Asymptotics38}
    u(x) = \frac{\myexp{-\frac{1}{2 \alpha}\left(x^2 + \left(\log{\frac{x}{\alpha}}\right)^2\right)}}{ \Gamma{(\frac{x}{\alpha})} \sqrt{2 \pi \alpha}} \left(1+O\left(\frac{(\log{x})^2}{x}\right)\right)
  \end{equation}
  as~$x\to+\infty$.
\end{thm}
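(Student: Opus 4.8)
The plan is to prove the two assertions of Theorem~\ref{TheoremMainResult} separately, first the exact Fredholm-determinant formula~\eqref{MainExactFormula}--\eqref{URH} and then the right-tail asymptotics~\eqref{Asymptotics38}. For the exact formula I would follow the Bertola--Cafasso / Girotti strategy already advertised in the introduction. Starting from~\eqref{PFredholmFirst}, I would use the double-contour integral representation~\eqref{eq_crit_kern_init} of~$K_{\crit}$ to rewrite~$\op K|_{L_2(a,+\infty)}$ (after the conjugation~\eqref{Knew} and the shift $x\mapsto a+x$) as a composition of two operators acting between $L_2$ of the half-line and $L_2$ of the contours $\gamma,\tilde\gamma$ — that is, to realize it as an ``integrable'' operator in the Its--Izergin--Korepin--Slavnov sense, so that $\fdet{\op I-\chi_{(a,+\infty)}\op K}$ equals the Fredholm determinant of a kernel of the form $\frac{f(s)^T g(t)}{s-t}$ on $\gamma\cup\tilde\gamma$. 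This is the content of Sections~\ref{Section6}--\ref{Section7} referred to in the excerpt. The standard IIKS theory then attaches to this integrable kernel exactly the Riemann--Hilbert problem~\ref{problem_y-rh}, and the logarithmic derivative of the Fredholm determinant in the parameter $a$ is expressed through the residue matrix $Y_1(a)$ from~\eqref{Y1}. Concretely, I expect $\partial_a \log\mathcal P(a)$ to come out as a bilinear expression in the entries of $Y_1(a)$; differentiating once more in $a$, using the RH problem to compute $\partial_a Y_1$, one should land on $\partial_a^2\log\mathcal P(a)=-u(a)=(Y_1(a))_{12}(Y_1(a))_{21}$, and then two integrations together with the decay $\mathcal P(a)\to1$, $\partial_a\log\mathcal P(a)\to0$ as $a\to+\infty$ give~\eqref{MainExactFormula} with the weight $(x-a)$. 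Establishing the existence of $Y(z)$ (Proposition~\ref{prop_72}) is part of this and follows from the trace-class/invertibility input of Proposition~\ref{PropositionTraceClass}.

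For the asymptotics~\eqref{Asymptotics38} I would run the Deift--Zhou nonlinear steepest descent analysis on~\ref{problem_y-rh} as $a\to+\infty$; this is Section~\ref{Section8}. The jump matrices in~\eqref{eq_jump_Y} are triangular on each of $\gamma$ and $\tilde\gamma$, with off-diagonal entries $(\Gamma(z))^{\mp1}e^{\mp(\alpha z^2/2 - a z)}$, so the natural phase function is $\varphi(z)=\alpha z^2/2 - a z$, whose critical point sits at $z=a/\alpha$, drifting to $+\infty$ with $a$. The first step is to understand where $\re\varphi$ controls the size of the jumps: on $\gamma$ (to the left of the imaginary axis) one wants $e^{-\varphi}$ small, on $\tilde\gamma=\{\re z=1/2\}$ one wants $e^{\varphi}$ small, and one must account for the $\Gamma$-factors, which on the vertical line $\tilde\gamma$ decay super-exponentially in $|\im z|$ and near the critical point $a/\alpha$ contribute a Stirling factor $\Gamma(a/\alpha)$. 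I would deform/split the contours so that away from a neighbourhood of the relevant saddle the jump is exponentially close to the identity, and near the saddle extract the leading contribution by a local (Gaussian-type) parametrix. Since the off-diagonal jump on $\tilde\gamma$ is, by Stirling, $\sim (\text{Gaussian in the local variable})\cdot \big(\Gamma(a/\alpha)\sqrt{2\pi\alpha}\big)^{-1} e^{-\frac1{2\alpha}((a/\alpha\cdot\alpha\text{-terms}))}$, the $(1,2)$ entry of $Y_1(a)$ should be of order $e^{-\frac1{2\alpha}(a^2+(\log(a/\alpha))^2)}/(\Gamma(a/\alpha)\sqrt{2\pi\alpha})$ while the $(2,1)$ entry, coming from $\gamma$, turns out to be $-1+o(1)$ in the appropriate normalization — or more precisely the product $(Y_1)_{12}(Y_1)_{21}$ is dominated by the $\tilde\gamma$-contribution, giving~\eqref{Asymptotics38}. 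The error term $O((\log x)^2/x)$ is exactly what a single Gaussian parametrix with Stirling corrections produces: the $(\log x)^2$ reflects the $(\log(a/\alpha))^2$ already present in the exponent (the mismatch between $\Gamma(a/\alpha)$ and its Stirling approximation enters at this order), and the $1/x$ is the usual width-of-the-parametrix gain.

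Carrying out the steepest descent, the main obstacle will be the moving saddle together with the $\Gamma$-function weights: unlike a textbook RH problem with a fixed contour and polynomial phase, here the critical point $a/\alpha\to\infty$ and the amplitude involves $\Gamma(z)$, whose asymptotics on the vertical line $\tilde\gamma$ must be controlled uniformly — both in the Stirling regime $|\im z|$ bounded and in the super-exponential-decay regime $|\im z|$ large — and matched smoothly to the Gaussian parametrix. One has to choose the scaling of the local variable so that $\Gamma$ near $a/\alpha$ is well-approximated by $\Gamma(a/\alpha)$ times a slowly varying factor (this is where $\log(x/\alpha)$ and hence the $(\log x)^2/x$ error are born), verify that the resulting local parametrix matches the outer solution $Y\equiv I$ to the required order on the boundary of the disc, and show the small-norm estimate for the final error RH problem is uniform as $a\to+\infty$. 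A secondary technical point is to justify differentiating the Fredholm determinant in $a$ and interchanging it with the contour integrals; this is routine given the exponential bound~\eqref{eq:ineq_for_K} from Proposition~\ref{PropositionTraceClass} but must be done carefully to legitimize the ODE-type argument producing~\eqref{MainExactFormula}.
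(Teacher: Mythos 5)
Your outline of the exact formula follows the paper's route: reduction of $\op K|_{L_2(a,+\infty)}$ to an integrable operator $\op Q_a$ on $L_2(\gamma\cup\tilde\gamma)$ (Sections~\ref{Section6}--\ref{Section7}), the IIKS correspondence with \ref{problem_y-rh}, and two integrations of a second-order identity for $\log\mathcal P(a)$. One small correction: the first logarithmic derivative is not bilinear in $Y_1$ --- the paper gets $\mathcal P'(a)/\mathcal P(a)=(Y_1(a))_{1,1}$ (Proposition~\ref{prop_72}), a single entry; the product $(Y_1)_{1,2}(Y_1)_{2,1}$ only appears after differentiating once more via the Lax-pair-type identity $\tfrac{d}{da}(Y_1)_{1,1}=(Y_1)_{1,2}(Y_1)_{2,1}$ obtained from $\Psi=Ye^{-az\sigma_3/2}$ and Liouville's theorem (Proposition~\ref{prop_73}). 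You also gloss over the main technical content of Section~\ref{Section6}: the passage from the non-integrable kernel on the half-line to $\op Q_a$ on the contours is done by a Fourier/conjugation identity and a chain of trace-class approximations with unbounded inverses $\op R^{-1},\op M^{-1}$, plus the block-determinant identity $\fdet{\op I-\op A_a\op B}=\fdet{\op I-\bigl(\begin{smallmatrix}0&\op A_a\\ \op B&0\end{smallmatrix}\bigr)}$; this is where the real work lies, not in the IIKS step.

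For the asymptotics your proposal is correct in spirit but genuinely heavier than what the paper does, and the paper explicitly flags this (Section 4.6): no $g$-function and no local or global parametrices are needed. After deforming $\tilde\gamma$ through the saddle $a/\alpha$, $\gamma$ through $1/(\alpha a)$, rescaling $z=\tfrac{a}{\alpha}(1-i\zeta)$, and conjugating by $e^{\frac{a^2}{8\alpha}\sigma_3}$, the jump of the resulting \ref{problem_u-rh} is \emph{uniformly} within $O(e^{-a^2/(4\alpha)})$ of the identity on the whole contour, so plain small-norm theory gives $U_1(a)=-\tfrac{1}{2\pi i}\int_{\lambda\cup\tilde\lambda}\Delta+O(e^{-a^2/(2\alpha)})$; the two off-diagonal integrals are then evaluated directly, one by residues (giving $\tfrac{i\alpha}{a}e^{-a^2/(4\alpha)}(1+O(e^{-a}))$) and one by a Gaussian integral with Stirling control of the $\Gamma$-ratio (which is exactly where your $O((\log x)^2/x)$ error is born). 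Your Gaussian-parametrix-plus-matching construction would presumably reproduce this, but it manufactures machinery the problem does not require; the one genuinely nonstandard point you should instead address is the uniform boundedness of the Cauchy operators on the $a$-dependent contours (handled in the paper via the Lipschitz-graph criterion of Coifman--McIntosh--Meyer). Minor quibbles: in the $Y$-normalization $(Y_1)_{2,1}=1+o(1)$ rather than $-1+o(1)$, and the existence of $Y$ rests on the invertibility of $\op I-\op Q_a$, which the paper derives from positivity and analyticity of $\mathcal P(a)$ (Lemma~\ref{Lemma71}), not directly from Proposition~\ref{PropositionTraceClass}.
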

A proof of Theorem \ref{TheoremMainResult} will be given in Section~\ref{Section9}. This theorem has the following corollary.
\begin{cor}
  \label{cor_main_theorem}
  Set
  \begin{equation}
    \label{eq:u0x}
    u_0(x) = \frac{\myexp{-\frac{1}{2 \alpha}\left(x^2+ \left(\log{\frac{x}{\alpha}}\right)^2\right)}}{ \Gamma{(\frac{x}{\alpha})} \sqrt{2 \pi \alpha}}.
  \end{equation}
  In these terms one has
  \begin{equation}
    \label{eq:P_asymp}
    \mathcal{P}(a) = 1 -\int\limits_a^{+\infty}(x-a) u_0(x) \, dx + O\left( \int\limits_a^{+\infty}(x-a) u_0(x) \frac{(\log{x})^2}{x} \, dx\right), \quad a \to +\infty,
  \end{equation}
  or in its cruder form
  \begin{equation}
    \label{eq:P_asymp_crude}
    \mathcal{P}(a) = 1 + O(u_0(a)),\quad a \to +\infty.
  \end{equation}  
\end{cor}
One may think that~\eqref{eq:P_asymp} and~\eqref{eq:P_asymp_crude} can be obtained directly via the Fredholm determinant expansion~\eqref{Pset}. The first non-trivial term of this expansion is estimated in Lemma~\ref{Lemma71}, and the result turns out to be much coarser than the crude estimate in~\eqref{eq:P_asymp_crude}. Improving this result involves asymptotic analysis of multiple contour integrals, which is a difficult task in itself. On the other hand our Riemann--Hilbert approach reduces the problem to analyzing only one dimensional integrals (see Proposition~\ref{Proposition82}) and gives much more than just the bound~\eqref{eq:P_asymp}. In particular, we obtain a Tracy--Widom-like formula~\eqref{MainExactFormula} for the gap probability and the asymptotics~\eqref{Asymptotics38}, from which~\eqref{eq:P_asymp} follows directly.


\section{Comments and remarks on Theorem~\ref{TheoremMainResult} and related works}
\label{Section4}
\subsection{}
\label{SectionDiscussionDistributionFunction}
The process~$\mathscr{X}_{\mathrm{crit}}$ almost surely has an infinite number of particles in~$\mathbb{R}$, since it can be shown by a standard argument that the corresponding trace of~\eqref{eq_crit_kern_init} is infinite. Proposition~\ref{PropositionTraceClass} yields
\begin{equation}
  \mexp[\mathscr{X}_{\crit}]{\#_{(a,+\infty)}(\mathscr{X}_{\crit})}=\Tr{(\op{K}|_{L_2(a,+\infty)})} < +\infty,
\end{equation}
where~$\#_{(a,+\infty)}(\mathscr{X}_{\crit})$ is the number of particles of~$\mathscr{X}_{\crit}$ in~$(a, +\infty)$ and the expectation is taken under~$\mathscr{X}_{\crit}$. The standard argument based on Markov's inequality leads to
\begin{equation}
  \prob[\mathscr{X}_{\crit}]{\#_{(a,+\infty)}(\mathscr{X}_{\crit}) < +\infty} = 1.
\end{equation}
This allows us to define an almost surely finite random variable~$x_{\mathrm{rm}}$, the coordinate of the rightmost particle of~$\mathscr{X}_{\crit}$,
\begin{equation}
  x_{\mathrm{rm}} = \max_{x \in \mathscr{X}_{\crit}} x.
\end{equation}

The gap probability~$\mathcal{P}(a)$ given in~\eqref{gap_prob_to_find} has the following interpretation in terms of~$x_{\mathrm{rm}}$, 
\begin{equation}
  \mathcal{P}(a) = \prob[\mathscr{X}_{\crit}]{x_{\mathrm{rm}} \le a},
\end{equation}
meaning that the function~$\mathcal{P}(a)$ is the distribution function of the random variable~$x_{\mathrm{rm}}$.

Let us compare~$\mathcal{P}(a)$ with another distribution function important in random matrix theory, specifically, with the Tracy--Widom distribution function~$F_{TW}(a)$ (see Tracy and Widom~\cite{TracyWidomI}). This function is given by the formula
\begin{equation}
  F_{TW}(a)=\myexp{-\int\limits_a^{+\infty}(x-a)v^2(x)\, dx},
\end{equation}
where~$v(x)$ is the Hastings--McLeod solution of the second Painlev\'{e} equation
\begin{equation}
  v''(x)=xv(x)+2v^3(x),
\end{equation}
uniquely determined by the asymptotic condition
\begin{equation}
  v(x)=\frac{1}{2\sqrt{\pi}}x^{-\frac{1}{4}}e^{-\frac{2}{3}x^{\frac{3}{2}}}\left(1+o(1)\right),\;\; x\to+\infty.
\end{equation}

We are not able to represent our function~$u(x)$ in~\eqref{MainExactFormula} as a solution of a differential equation (see Section~\ref{subsect:asympt_of_P}); however, the formula~\eqref{URH} says that~$u(x)$ is expressible in terms of the solution of~\ref{problem_y-rh}. Likewise, the Hastings--McLeod solution~$v(x)$ of the second Painlev\'{e} equation can be understood in terms of the solution of a similar Riemann--Hilbert problem (e.g., see Its~\cite[Section 9.2]{Its}).

\subsection{}
\label{SectionDiscussionIntegrableOperators}
Let~$\Gamma$ be a contour in~$\mathbb{C}$. An operator~$\op{K}$ acting on~$L_2\left(\Gamma\right)$ is called \textit{integrable} in the sense of Its, Izergin, Korepin and Slavnov (see~\cite{ItsIzerginKorepinSlavnov}) if it has a kernel of the form
\begin{equation}\label{IntegrableKernel}
  K(z,z')=\frac{\sum \limits_{j=1}^mf_j(z)g_j(z')}{z-z'},\quad z,z'\in\Gamma,
\end{equation}
for some functions~$f_j$, $g_j$, $j=1,\ldots,m<\infty$, satisfying
\begin{equation}
  \sum_{j=1}^mf_j(z)g_j(z) = 0;
\end{equation}
the corresponding kernels are also called \textit{integrable}.

Integrable operators are well-studied (e.g., see Deift \cite{Deift}), and their key property is that the corresponding resolvents can be computed explicitly in terms of the solutions of Riemann--Hilbert problems (e.g., see Baik, Deift, and Suidan~\cite[Theorem 5.21]{BaikDeiftSuidan}). Given an integrable operator, if the dimension of the associated Riemann--Hilbert problem is sufficiently small (say, two), the non-linear steepest descent method of Deift and Zhou~\cite{DeiftZhou} can be used to carry out the asymptotic analysis of this problem, and thus to obtain the asymptotics of the corresponding Fredholm determinant. The representation in terms of the Riemann--Hilbert problem also enables one to obtain a Lax pair and then a system of differential equations for the Fredholm determinant (e.g., see Baik, Deift, and Suidan~\cite[Section 6.5]{BaikDeiftSuidan}, or Borodin and Deift~\cite{BorodinDeift}).

We note that both the kernel~$K(x,y)$ given by~\eqref{Knew} and the related kernel~$K_{\crit}(x,y)$ given by~\eqref{eq_crit_kern_init} lack integrability, and thus the usual analysis does not go through. At the same time, it is worth noticing that the following formula holds (see Liu, Wang, and Wang~\cite[Section 3.1]{LiuWangWang}), 
\begin{equation}
  \label{almost_integr}
  K_{\crit}(x,y) = \frac{\alpha f_{-1}(x)g_{-1}(y) + \sum \limits_{n=0}^\infty f_n(x) g_n(y)}{y-x},
\end{equation}
where
\begin{equation}
  \label{f_g_min1}
  f_{-1}(x) = \int \limits_{\gamma} \frac{dt}{2 \pi i} \Gamma{(t)}e^{-\frac{\alpha t^2}{2}+x t}, \quad   g_{-1}(x) = \int \limits_{\tilde{\gamma}} \frac{ds}{2 \pi i} \frac{1}{\Gamma{(s)}}e^{\frac{\alpha s^2}{2}-x s},
\end{equation}
and
\begin{equation}
  f_{k}(x) = \int \limits_{\gamma} \frac{dt}{2 \pi i} \frac{\Gamma{(t)}}{t+k} e^{-\frac{\alpha t^2}{2}+x t}, \quad   g_{k}(x) = \int \limits_{\tilde{\gamma}} \frac{ds}{2 \pi i} \frac{1}{(s+k)\Gamma{(s)}}e^{\frac{\alpha s^2}{2}-x s},\quad k=1,2,\ldots,  
\end{equation}
with the contours~$\gamma$ and~$\tilde{\gamma}$ specified in Fig.~\ref{contours}. Even though~\eqref{almost_integr} resembles an integrable kernel, the crucial difference is the number of terms in the sum. That is why the standard approach relating the determinant~$\fdet{\op{I}-\op{K}|_{L_2(a,+\infty)}}$ to a Riemann--Hilbert problem still falls short.

\subsection{}
If~$M$ is fixed, then the kernel~$K_{N,M}(x,y)$ in~\eqref{KZKERNELFINITE} has a hard edge scaling limit as~$N \to \infty$, which yields a limiting determinantal process on~$\R_{>0}$ (see~ Kuijlaars and Zhang~\cite[Theorem 5.3]{KuijlaarsZhang}). The corresponding limiting kernel is of the integrable form, and it was shown by Strahov~\cite{Strahov} that a Hamiltonian system associated with a gap probability can be derived and further related to the theory of isomonodromic deformations of Jimbo, Miwa, M\^{o}ri, and Sato~\cite{JimboMiwaMoriSato}. This leads to a formula for the gap probability in terms of a solution of a system of nonlinear differential equations (see Strahov~\cite[Proposition 3.9]{Strahov}). As it was demonstrated by Witte and Forrester~\cite{WitteForrester}, in the case~$M=2$ this system can be reduced to a single differential equation and the asymptotics of the large gap probability can be computed. Similar considerations were also used by Zhang~\cite{Zhang} and by Mangazeev and Forrester~\cite{MangazeevForrester} in related problems. We note, however, that it is highly unlikely that this method can be extended to the case of the critical kernel~\eqref{almost_integr}.

\subsection{}
\label{subsect:Krasovsky}
Krasovsky~\cite{Krasovsky} and Deift, Its, and Krasovsky~\cite{DeiftItsKrasovsky} present an approach to Fredholm determinants based on approximation of limiting kernels by finite dimensional kernels. For example, the Airy kernel related to the Tracy--Widom distribution is approximated with the Christoffel--Darboux kernel corresponding to the Hermite orthogonal polynomials.

In our situation, the kernel~$K_{N,M}(x,y)$ defined by~\eqref{init_kern} can be considered as an approximation for the limiting kernel~$K_{\crit}(x,y)$. It is known (see Kuijlaars and Zhang~\cite[Proposition 5.4]{KuijlaarsZhang}) that~$K_{N,M}(x,y)$ is of the integrable form. However, the number of terms in the enumerator of~\eqref{IntegrableKernel} will be equal to~$M+1$, and therefore the corresponding Riemann--Hilbert problem will be of size~$(M+1) \times (M+1)$. As~$M$ is growing, the size of the Riemann--Hilbert problem grows as well, and its asymptotic analysis does not seem to be feasible.

\subsection{}
\label{subsect:Girotti}
In the recent paper by Claeys, Girotti, and Stivigny~\cite{ClaeysGirottiStivigny}, an alternative approach is employed to calculating the asymptotics of a Fredholm determinant for the gap probability of the hard edge scaling limit of~$\mathscr{X}_{N,M}$ given by~\eqref{init_kern}, in the situation when~$M$ is fixed. This approach follows the earlier works by Bertola and Cafasso~\cite{BertolaCafasso} and by Girotti~\cite{Girotti1, Girotti2, Girotti3}. As a result, the large gap asymptotics at the hard edge for an arbitrary~$M$ is derived, generalizing the result of Witte and Forrester in~\cite{WitteForrester}. The result in~\cite{ClaeysGirottiStivigny} has recently been extended to include a full asymptotics expansion (with the  explicit constant term) for the gap probability by Charlier, Lenells and Mauersberger~\cite{CharlierLenellsMauersbergerI, CharlierLenellsMauersbergerII}, in an even more general setting.

The technique in~\cite{ClaeysGirottiStivigny} is based on the operator transformation which reduces the corresponding kernel to the integrable form~\eqref{IntegrableKernel}. It is also important to note that the corresponding Riemann--Hilbert problem turns out to be of size~$2\times2$, which substantially simplifies the subsequent steepest decent analysis. Our proof of Theorem~\ref{TheoremMainResult} uses the ideas similar to those of~\cite{BertolaCafasso, ClaeysGirottiStivigny, Girotti1,Girotti2, Girotti3}.

\subsection{}
An essential ingredient of the proof of our main result, Theorem~\ref{TheoremMainResult}, is the steepest descent analysis of~\ref{problem_y-rh}. There are several aspects of this analysis that make it different from the usual scenario considered in the literature.

Up to a simple normalization (see Section~\ref{Section8}) the jump matrix~\eqref{eq_jump_Y} of~\ref{problem_y-rh} is already close to the identity matrix and thus has a form suitable for applying the small norm theory (e.g., see Deift~\cite[Chapter 7]{Deift_book}). There is no need to construct the~$g$-function and the corresponding local and global parametrices. In that respect, our problem resembles the corresponding problem in the case of the Airy kernel (e.g., see Its~\cite{Its}). At the same time, one still needs to choose the normalization to guarantee that the jump matrix is as close to the identity matrix as possible. This is done in the steepest-descent fashion and thus justifies the name of the whole procedure.

However, even when the normalization has been chosen, accurate bounds on the gap probability $\mathcal{P}(a)$ are not guaranteed. For instance, the standard approach using Proposition~\ref{prop_72}, formula~\eqref{Relation87}, and the asymptotics 
\begin{equation}
  (U_1(a))_{1,1} = O\left(e^{-\frac{a^2}{2 \alpha}}\right), \quad a \to +\infty,
\end{equation}
from Proposition~\ref{th_RH_U_assympt} only yields the bound
\begin{equation}
\mathcal{P}(a) = 1 + O(e^{- \frac{a^2}{2 \alpha}}), \quad a \to +\infty,
\end{equation}
which is not even as strong as the crude bound~\eqref{eq:P_asymp_crude}. This is why Proposition~\ref{prop_73} is an important, non-standard, part of our Riemann--Hilbert analysis, which not only allows one to extract accurate asymptotics, but also yields a Tracy--Widom-like formula.

\subsection{}
\label{subsect:asympt_of_P}
An interesting problem is to find a system of differential equations for the function~$u(x)$ in Theorem~\ref{TheoremMainResult}, much like it is the case for the Airy kernel and the Painlev\'{e} II equation. However, this is an extremely hard problem and such equations are not available. Indeed, unlike the Airy case, the off-diagonal entries in the jump matrix~\eqref{eq_jump_Y} of~\ref{problem_y-rh} have an infinite number of zeros and poles in the complex plane. In particular, the standard approach (e.g., see Its~\cite{Its}) to finding the Lax pair is hard to implement fully because instead of rational, a meromorphic function needs to be determined.

Another interesting problem is to study the left-tail asymptotics for the function~$u(x)$ and thus that for the gap probability~$\mathcal{P}(a)$. Since our results ensures the reducibility to the integrable operator (Theorem~\ref{TheoremGapProbabilityIntegrableOperator}) and further to the Riemann--Hilbert problem (Proposition~\ref{prop_72}) for all~$a \in \mathbb{R}$, the consequent Riemann--Hilbert analysis is expected to be standard (the construction of the~$g$-function and the local and global parametrices) but of greater technical difficulty. This is a subject of our future work.


\section{Proof of Proposition~\ref{PropositionTraceClass}}
\label{Section5}
Let us write out the kernel~\eqref{Knew} using~\eqref{eq_crit_kern_init}. We have
\begin{equation}
  \label{K41}
  K(x,y) =\frac{1}{(2\pi i)^2} \int \limits_{\tilde{\gamma}} ds \int \limits_{\gamma} dt \;\frac{\Gamma{(t)}}{\Gamma{(s)}}\; \frac{\exp{\Big(\frac{\alpha s^2}{2} - y \big(s-\frac{1}{2}\big)\Big)}}{\exp{\Big(\frac{\alpha t^2}{2} - x \big(t-\frac{1}{2}\big)\Big)}}\frac{1}{s-t},\quad x,y > a,
\end{equation}
where the integration contours~$\gamma$ and~$\tilde{\gamma}$ are shown in Fig.~\ref{contours}.

Our first task is to prove that the double contour integral in~\eqref{K41} converges absolutely. Set $g(s,t;x,y)$ to be the integrand on the right-hand side of~\eqref{K41},
\begin{equation}
  g(s,t;x,y)=\frac{\Gamma{(t)}}{\Gamma{(s)}}\; \frac{\exp{\Big(\frac{\alpha s^2}{2} - y \big(s-\frac{1}{2}\big)\Big)}}{\exp{\Big(\frac{\alpha t^2}{2} - x \big(t-\frac{1}{2}\big)\Big)}} \frac{1}{s-t}.
\end{equation}
We write
\begin{equation}
  \label{eq:g_via_g1g2}
  g(s,t;x,y)=\frac{g_1(t)g_2(s)}{s-t},
\end{equation}
where
\begin{equation}
  \begin{aligned}
    g_1(t)=\Gamma(t)\myexp{-\frac{\alpha t^2}{2}+x \Big(t-\frac{1}{2}\Big)},\quad
    g_2(s)=\left(\Gamma(s)\right)^{-1}\myexp{\frac{\alpha s^2}{2}-y\Big(s-\frac{1}{2}\Big)}.
  \end{aligned}
\end{equation}

Recall Stirling's formula for the gamma function,
\begin{equation}\label{StirlingFormula}
  \Gamma(z)=\sqrt{2 \pi} e^{-z}z^{z-\frac{1}{2}}\left(1+O\left(\frac{1}{z}\right)\right)
\end{equation}
as~$z\to\infty$, $|\arg{z}| < \pi$.

For sufficiently large~$|t|$, the contour~$\gamma$ can be parameterized by~$t(r)=-r\pm i\delta$, where~$r>0$ and~$\delta>0$ is fixed. Also, for sufficiently large~$|s|$, the contour~$\tilde{\gamma}$ can be parameterized by~$s(\tau)=1 + i \tau$, where~$\tau\in\R$. Stirling's formula~\eqref{StirlingFormula} then implies
\begin{equation}
  g_1(t)=O\left(e^{-C|t|^2}\right)
\end{equation}
for some~$C>0$, as~$t\to\infty$, $t \in \gamma$, uniformly in~$x>a$; and
\begin{equation}
  g_2(s)=O\left(e^{-C|s|^2}\right) 
\end{equation}
for some~$C>0$, as~$s\to\infty$, $s \in \tilde{\gamma}$, uniformly in~$y>a$.

Since~$\gamma \cap \tilde{\gamma} = \varnothing$, the difference~$(s-t)$ in~\eqref{eq:g_via_g1g2} is bounded from below, and we can conclude that
\begin{equation}
  g(s,t;x,y)=O\left(e^{-C\left(|t|^2+|s|^2\right)}\right)
\end{equation}
as~$t,s \to \infty$, $t \in \gamma$, $s \in \tilde{\gamma}$, uniformly in~$x,y>a$. Thus the double contour integral in~\eqref{K41} converges absolutely.

Next, observe that the kernel~\eqref{K41} can be written as
\begin{equation}
  \label{KGG}
  K(x,y)=\int\limits_0^{+\infty}G(x,q)\widetilde{G}(q,y)\, dq,
\end{equation}
with~$G(x,q)$ and~$\widetilde{G}(q,y)$ given by
\begin{equation}
  \label{G45}
  G(x,q)=\frac{1}{2\pi i}\int\limits_{\gamma}dt\, \Gamma(t)e^{-\frac{\alpha t^2}{2}+(x+q)\left(t-\frac{1}{2}\right)}
\end{equation}
and
\begin{equation}
  \label{G46}
  \widetilde{G}(q,y)=\frac{1}{2\pi i}\int\limits_{\tilde{\gamma}}ds\, \left(\Gamma(s)\right)^{-1}
  e^{\frac{\alpha s^2}{2}-(y+q)\left(s-\frac{1}{2}\right)}.
\end{equation}
Indeed, since for fixed~$x, y> a$ we have
\begin{equation}
  \frac{\Gamma(t)}{\Gamma(s)} \; \frac{\exp{\Big(\frac{\alpha s^2}{2} - y \big(s-\frac{1}{2}\big)+q t\Big)}}{\exp{\Big(\frac{\alpha t^2}{2} - x \big(t-\frac{1}{2}\big)+q s\Big)}}=O\left(e^{-C\left(|s|^2+|t|^2\right)-\frac{q}{2}}\right)
\end{equation}
as~$s,t,q \to \infty$, $s \in \tilde{\gamma}$, $t \in \gamma$, $q>0$, Fubini's theorem yields~\eqref{KGG} immediately.

The next step is to show that the kernels~$G(x,q)$ and~$\widetilde{G}(q,y)$ define Hilbert--Schmidt operators,~$\op{G}$ and~$\widetilde{\op{G}}$, from~$L_2(0,+\infty)$ to~$L_2(a,+\infty)$ and from~$L_2(a,+\infty)$ to~$L_2(0,+\infty)$, respectively. Note that Stirling's formula~\eqref{StirlingFormula} yields
\begin{equation}
  \label{Ineq41}
  \left(\Gamma(s)\right)^{-1}e^{\frac{\alpha s^2}{2}-(y+q)\left(s-\frac{1}{2}\right)} = O\left(e^{-C|s|^2-\frac{y+q}{2}}\right)
\end{equation}
as~$s \to \infty$, $s \in \tilde{\gamma}$, uniformly in~$y>a$,~$q>0$. Thus,
\begin{equation}\label{GTILDEESTIMATE}
  \widetilde{G}(q,y) = O\left(e^{-\frac{y+q}{2}}\right)
\end{equation}
as~$q,y \to +\infty$. This means~$\widetilde{G}(q,y)$ is Hilbert--Schmidt.

To prove that~$G(x,q)$ is Hilbert--Schmidt, we split the contour~$\gamma$ into two parts as shown in Fig.~\ref{contours_break}
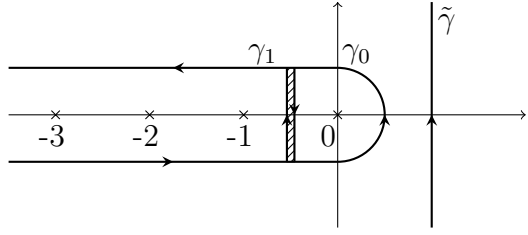
\begin{figure}[ht!]
  \centering
  \begin{tikzpicture}[scale=2.5]
    \begin{scope}[compass style/.style={color=black}, color=black, decoration={markings,mark= at position 0.5 with {\arrow{stealth}}}]
      \draw[->] (-1.75,0) -- (1,0);
      \draw[->] (0,-0.6) -- (0,0.6);

      \draw[thick, postaction=decorate] (0.5,-0.6) -- (0.5,0.6);

      \draw[thick, postaction=decorate] (-1.75,-0.25) -- (0,-0.25);
      \draw[thick, postaction=decorate] (0,0.25) -- (-1.75,0.25);
      \draw[thick, postaction=decorate] (0,-0.25)  arc[start angle=-90, end angle=90,radius=0.25];

      \draw[thick, postaction=decorate] (-0.27,-0.25) -- (-0.27,0.25);
      \draw[thick, postaction=decorate]  (-0.23,0.25) -- (-0.23,-0.25);
      \draw[fill, pattern=north east lines] (-0.23,0.25) -- (-0.23,-0.25) -- (-0.27,-0.25) -- (-0.27,0.25);

      \node at (-0.05,-0.11) {0};
      \node at (0.58,+0.5) {$\tilde{\gamma}$};
      \node at (-0.4,+0.32) {$\gamma_1$};
      \node at (0.1,+0.32) {$\gamma_0$};
      
      \draw (0,0) node [cross] {};
      \draw (-0.5,0) node [cross] {};
      \node at (-0.52,-0.11) {-1};
      \draw (-1,0) node [cross] {};
      \node at (-1.02,-0.11) {-2};
      \draw (-1.5,0) node [cross] {};
      \node at (-1.52,-0.11) {-3};

    \end{scope}
  \end{tikzpicture}  
  \caption{The contours~$\gamma = \gamma_0 \cup \gamma_1$ and~$\tilde{\gamma}$. The contour~$\gamma_0$ encircles zero, and the contour~$\gamma_1$ encircles negative integers. The contours~$\gamma_0$ and~$\gamma_1$ meet at~$\re{t} = -1/2$.}
  \label{contours_break}
\end{figure}
and write
\begin{equation}
  \label{Gsum}
  G(x,q)=\frac{1}{2\pi i}\int\limits_{\gamma_0}dt\, \Gamma(t)e^{-\frac{\alpha t^2}{2}+(x+q)\left(t-\frac{1}{2}\right)}+
  \frac{1}{2\pi i}\int\limits_{\gamma_1}dt\, \Gamma(t)e^{-\frac{\alpha t^2}{2}+(x+q)\left(t-\frac{1}{2}\right)}.
\end{equation}
The contour integral along~$\gamma_0$ can be computed explicitly by the residue theorem,
\begin{equation}
  \label{Eq49}
  \frac{1}{2\pi i}\int\limits_{\gamma_0}dt\, \Gamma(t)e^{-\frac{\alpha t^2}{2}+(x+q)\left(t-\frac{1}{2}\right)}=e^{-\frac{x+q}{2}}.
\end{equation}
To estimate the other integral in~\eqref{Gsum}, we use~\eqref{StirlingFormula} once again and write
\begin{equation}
  \Gamma(t)e^{-\frac{\alpha t^2}{2}+(x+q)\left(t-\frac{1}{2}\right)} = O\left(e^{-C|t|^2-\frac{x+q}{2}}\right)
\end{equation}
as~$t \to \infty$, $t \in \gamma_1$, uniformly in~$x>a$, $q>0$. This implies
\begin{equation}
  \label{Ineq410}
  \frac{1}{2\pi i}\int\limits_{\gamma_1}dt\, \Gamma(t)e^{-\frac{\alpha t^2}{2}+(x+q)\left(t-\frac{1}{2}\right)} =  O\left(e^{-\frac{x+q}{2}} \right),
\end{equation} 
as~$x,q \to +\infty$. Gathering up~\eqref{Eq49} and~\eqref{Ineq410}, we arrive at
\begin{equation}
  \label{E411}
  G(x,q) = O\left(e^{-\frac{x+q}{2}}\right)
\end{equation}
as~$x,q \to +\infty$. This proves the claim.

Clearly, the identity~\eqref{KGG} translates into
\begin{equation}
  \op{K} = \op{G} \tilde{\op{G}}.
\end{equation}

Since~$\op{G}$ and~$\op{\tilde{G}}$ are Hilbert--Schmidt, $\op{K}$ is of trace class (e.g., see Baik, Deift, and Suidan~\cite[Property (A.11)]{BaikDeiftSuidan}). Moreover, ~\eqref{Eq49} and~\eqref{E411}, together with~\eqref{KGG}, show that~\eqref{eq:ineq_for_K} holds. This concludes the proof of the proposition.
\qed


\section{Reduction to an integrable kernel}
\label{Section6}
Recall that due to~\eqref{PFredholmFirst}, the gap probability~$\mathcal{P}(a)$ is a Fredholm determinant of the operator~$\op{K}|_{L_2(a,+\infty)}$. It is possible to transform this operator into an integrable operator without changing the determinant.
\begin{thm}
  \label{TheoremGapProbabilityIntegrableOperator}
  The gap probability~$\mathcal{P}(a)$ can be written as
  \begin{equation}
    \label{eq:th_P_via_Fr}
    \mathcal{P}(a)=\fdet{\op{I}-\op{Q}_a},\quad a \in \mathbb{R},
  \end{equation}
  where the operator~$\op{Q}_a$ on~$L_2\left(\gamma\cup\tilde{\gamma}\right)$ is defined by the kernel~$Q_a(x,y)$,
  \begin{equation}
    \label{KernelQa}
    Q_a(x,y)=\frac{{1}_{\tilde{\gamma}}(x){1}_{\gamma}(y) \Gamma(y) e^{\frac{\alpha( x^2 - y^2)}{4}-a(x-y)} -{1}_{\gamma}(x){1}_{\tilde{\gamma}}(y) \left(\Gamma(y)\right)^{-1} e^{-\frac{\alpha (x^2-y^2)}{4}}}{2\pi i(x-y)}, \quad x,y \in \gamma\cup\tilde{\gamma}. 
  \end{equation}
\end{thm}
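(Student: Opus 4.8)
The plan is to build on the factorization of $\op{K}$ obtained in Section~\ref{Section5}. By~\eqref{KGG} we have $K(x,y)=\int_0^{+\infty}G(x,q)\widetilde{G}(q,y)\,dq$ with $G$, $\widetilde{G}$ as in~\eqref{G45}--\eqref{G46}, so that $\op{K}|_{L_2(a,+\infty)}=\chi_{(a,+\infty)}\op{G}\widetilde{\op{G}}\chi_{(a,+\infty)}$. Following Bertola and Cafasso~\cite{BertolaCafasso} and Claeys, Girotti, and Stivigny~\cite{ClayesGirottiStivigny}, the idea is to realize this restriction as a product of four operators shuttling between the spaces $L_2(a,+\infty)$, $L_2(0,+\infty)$, $L_2(\gamma)$, $L_2(\tilde\gamma)$, and then to use the cyclic invariance $\fdet{\op{I}-\op{U}\op{V}}=\fdet{\op{I}-\op{V}\op{U}}$ of Fredholm determinants to bring it to an operator on $L_2(\gamma\cup\tilde\gamma)$. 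Concretely, I would split the single contour integrals in~\eqref{G45} and~\eqref{G46} into iterated ones, writing $\op{G}=\op{A}_1\op{A}_2$ and $\widetilde{\op{G}}=\op{B}_1\op{B}_2$, where $\op{A}_1\colon L_2(\gamma)\to L_2(a,+\infty)$, $\op{A}_2\colon L_2(0,+\infty)\to L_2(\gamma)$, $\op{B}_1\colon L_2(\tilde\gamma)\to L_2(0,+\infty)$, $\op{B}_2\colon L_2(a,+\infty)\to L_2(\tilde\gamma)$ have the kernels
\[
  \op{A}_1(x,t)=\tfrac{\Gamma(t)}{2\pi i}\,e^{-\frac{\alpha t^2}{4}+(t-\frac12)x},\quad
  \op{A}_2(t,q)=e^{-\frac{\alpha t^2}{4}+(t-\frac12)q},\quad
  \op{B}_1(q,s)=\tfrac{1}{2\pi i\,\Gamma(s)}\,e^{\frac{\alpha s^2}{4}-(s-\frac12)q},\quad
  \op{B}_2(s,y)=e^{\frac{\alpha s^2}{4}-(s-\frac12)y}.
\]
The way the weights $\Gamma(t)e^{-\alpha t^2/2}$ and $\Gamma(s)^{-1}e^{\alpha s^2/2}$ are distributed between the two factors is chosen precisely so that the rearrangement below lands on~\eqref{KernelQa}; any other split differs from this one by a diagonal conjugation and hence changes no Fredholm determinant. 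Computing the inner integrations — the only nontrivial one being $\int_0^{+\infty}e^{q(t-s)}\,dq=\frac{1}{s-t}$, valid since $\re t<\tfrac12<\re s$ on $\gamma\cup\tilde\gamma$, which is exactly what reproduces the Cauchy factor in~\eqref{K41} — and invoking Fubini's theorem as in Section~\ref{Section5}, one verifies directly that $\op{A}_1\op{A}_2\op{B}_1\op{B}_2=\op{K}|_{L_2(a,+\infty)}$.

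Next I would check that $\op{A}_1,\op{A}_2,\op{B}_1,\op{B}_2$ are Hilbert--Schmidt between the indicated spaces, which legitimizes the cyclic rearrangements. This reuses the estimates of Section~\ref{Section5}: Stirling's formula~\eqref{StirlingFormula} gives decay $e^{-C|t|^2}$ along the unbounded part of $\gamma$ (and $e^{-C|s|^2}$ along $\tilde\gamma$), on the bounded parts $\Gamma$ is analytic and bounded since $\gamma$ stays away from the poles $0,-1,-2,\ldots$, and the strict inequalities $\re(t-\tfrac12)<0$ on $\gamma$ and $\re(s-\tfrac12)>0$ on $\tilde\gamma$ produce exponential decay in the half-line variables $x,q,y$; together these bound the Hilbert--Schmidt norms. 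Granting this, cyclic invariance gives
\[
  \mathcal{P}(a)=\fdet{\op{I}-\op{A}_1\op{A}_2\op{B}_1\op{B}_2}=\fdet{\op{I}-\op{A}_2\op{B}_1\op{B}_2\op{A}_1}=\fdet{\op{I}-\op{Q}\op{R}}=\fdet{\op{I}-\op{R}\op{Q}},
\]
where $\op{Q}:=\op{A}_2\op{B}_1\colon L_2(\tilde\gamma)\to L_2(\gamma)$ and $\op{R}:=\op{B}_2\op{A}_1\colon L_2(\gamma)\to L_2(\tilde\gamma)$; here the first two determinants are understood on $L_2(a,+\infty)$ and the last two on $L_2(\gamma)$ and $L_2(\tilde\gamma)$, respectively.

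It remains to identify the last expression with $\fdet{\op{I}-\op{Q}_a}$ on $L_2(\gamma\cup\tilde\gamma)=L_2(\gamma)\oplus L_2(\tilde\gamma)$. By the standard block-determinant identity, $\fdet{\op{I}-\op{Q}\op{R}}$ equals the Fredholm determinant on $L_2(\gamma\cup\tilde\gamma)$ of $\op{I}$ minus the operator whose only nonzero blocks are $\op{Q}$ (from the $\tilde\gamma$-component to the $\gamma$-component) and $\op{R}$ (from the $\gamma$-component to the $\tilde\gamma$-component). Evaluating $\op{A}_2\op{B}_1$ by $\int_0^{+\infty}e^{q(t-s)}\,dq=\tfrac{1}{s-t}$ gives its kernel $\frac{-\,(\Gamma(s))^{-1}e^{-\alpha(t^2-s^2)/4}}{2\pi i\,(t-s)}$ for $t\in\gamma$, $s\in\tilde\gamma$, and evaluating $\op{B}_2\op{A}_1$ by $\int_a^{+\infty}e^{(t-s)x}\,dx=\tfrac{e^{a(t-s)}}{s-t}$ gives its kernel $\frac{\Gamma(t)\,e^{\alpha(s^2-t^2)/4-a(s-t)}}{2\pi i\,(s-t)}$ for $s\in\tilde\gamma$, $t\in\gamma$. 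These are precisely the $1_{\gamma}(x)\,1_{\tilde{\gamma}}(y)$ and $1_{\tilde{\gamma}}(x)\,1_{\gamma}(y)$ entries appearing in~\eqref{KernelQa}, so assembling them proves~\eqref{eq:th_P_via_Fr}.

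The step I expect to demand the most care is the first one: fixing the distribution of the weights between the factors so that the cyclic rearrangement produces exactly the kernel~\eqref{KernelQa} and not a diagonal conjugate of it, while keeping the bookkeeping of the four function spaces straight. By contrast, the analytic input — the Hilbert--Schmidt bounds and the uses of Fubini's theorem — is essentially already contained in Section~\ref{Section5}, so that part should be routine.
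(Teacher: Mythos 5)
Your argument is correct, and it reaches the same destination as the paper by a noticeably shorter route. Both you and the paper ultimately land on the same intermediate operator --- your $\op{R}\op{Q}=(\op{B}_2\op{A}_1)(\op{A}_2\op{B}_1)$ on $L_2(\tilde\gamma)$ has exactly the kernel~$H_a(z,s)$ of Lemma~\ref{LemmaOperators}, formula~\eqref{eq_kern_h} --- and both then invoke the same block-determinant trick that appears in~\eqref{eq:det_idents}. The difference lies entirely in how one gets from $\fdet{\op{I}-\chi_{(a,+\infty)}\op{K}}$ to $\fdet{\op{I}-\op{H}_a}$. The paper's Lemma~\ref{LemmaOperators} uses the Fourier--transform identity~\eqref{Identity61} to write $\chi_{(a,+\infty)}\op{K}=\op{R}^{-1}\mathcal{F}^{-1}\op{U}^{-1}\op{M}^{-1}\op{H}_a\op{M}\op{U}\mathcal{F}\op{R}$ and then has to cope with the unbounded inverses $\op{R}^{-1}$, $\op{M}^{-1}$ through truncations $\chi_{(0,N)}$, strong-operator-topology limits, and continuity of the Fredholm determinant. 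You bypass all of that by factoring $\op{K}|_{L_2(a,+\infty)}=\op{A}_1\op{A}_2\op{B}_1\op{B}_2$ directly into four Hilbert--Schmidt operators --- the single contour integrals in~\eqref{G45}--\eqref{G46} are re-read as compositions through $L_2(\gamma)$, $L_2(\tilde\gamma)$, and the half-line --- so that every cyclic rearrangement is covered by the standard lemma, which the paper itself quotes, that $\fdet{\op{I}+\op{A}\op{B}}=\fdet{\op{I}+\op{B}\op{A}}$ when both products are trace class. Since $\re t<1/2<\re s$ for $t\in\gamma$, $s\in\tilde\gamma$ and the Gaussian-plus-gamma decay was already checked in Section~\ref{Section5}, the Hilbert--Schmidt bounds you need are genuinely routine, and the weight split $\Gamma(t)e^{-\alpha t^2/2}\mapsto\bigl(\Gamma(t)e^{-\alpha t^2/4}\bigr)\cdot e^{-\alpha t^2/4}$ (and similarly on $\tilde\gamma$) is chosen, as you rightly emphasize, precisely so that the two blocks you compute coincide with the two terms of~\eqref{KernelQa} rather than a diagonal conjugate of them. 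What your version buys is that it stays entirely within bounded operators; what the paper's version buys is a cleaner conceptual parallel to the Bertola--Cafasso/Claeys--Girotti--Stivigny scheme, where the passage from the half-line to a contour is always executed through a Fourier transform. Both are correct proofs of the theorem.
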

\begin{rem}
  \label{Q_a_analytic}
  It can be seen from the proof of this theorem that the right-hand side of~\eqref{eq:th_P_via_Fr} is well-defined for all~$a \in \mathbb{C}$.
\end{rem}
Before we prove this theorem, we need to establish an auxiliary lemma.
\begin{lem}
  \label{LemmaOperators}
  Fix~$a \in \mathbb{R}$, and let~$\op{K}|_{L_2(a, + \infty)}$ be an integral operator whose kernel~$K(x,y)$ is given by the formula~\eqref{K41}. Then, 
  \begin{equation}
    \fdet{\op{I}-\op{K}|_{L_2(a, + \infty)}} = \fdet{\op{I}-\op{H}_a},
  \end{equation}
  where~$\op{H}_a$ is the trace-class operator on~$L_2(\tilde{\gamma})$ given by the kernel
  \begin{equation}
    \label{eq_kern_h}
    H_a(z,s) = - \frac{1}{4 \pi^2} \int \limits_{\gamma}\frac{e^{a(t-z)}}{(s-t)(z-t)}\frac{\Gamma{(t)}}{\Gamma{(s)}} \myexp{{\frac{\alpha}{4}(z^2 + s^2 -2t^2)}}\, dt.
  \end{equation}
\end{lem}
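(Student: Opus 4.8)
# Proof Proposal for Lemma~\ref{LemmaOperators}

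The plan is to exploit the factorization $\op{K} = \op{G}\widetilde{\op{G}}$ established in Section~\ref{Section5} and the general principle that $\fdet{\op{I} - \op{A}\op{B}} = \fdet{\op{I} - \op{B}\op{A}}$ whenever both products are trace class. First I would restrict the factorization to $L_2(a,+\infty)$: writing $\chi_a$ for the multiplication operator by $1_{(a,+\infty)}$, we have $\op{K}|_{L_2(a,+\infty)} = \chi_a \op{G}\widetilde{\op{G}}\chi_a$, so $\fdet{\op{I} - \op{K}|_{L_2(a,+\infty)}} = \fdet{\op{I} - \chi_a\op{G}\cdot\widetilde{\op{G}}\chi_a} = \fdet{\op{I} - \widetilde{\op{G}}\chi_a\op{G}}$, where now $\widetilde{\op{G}}\chi_a\op{G}$ acts on $L_2(\tilde{\gamma})$ once we interpret $\op{G}: L_2(0,+\infty)\to L_2(\gamma)$ appropriately — actually the cleaner route is to view $\op{G}$ and $\widetilde{\op{G}}$ directly as operators between $L_2$ on a contour and $L_2(0,+\infty)$. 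Concretely, I would define $\op{A}: L_2(a,+\infty) \to L_2(\tilde{\gamma})$ with kernel built from $\widetilde{G}$, and $\op{B}: L_2(\tilde{\gamma})\to L_2(a,+\infty)$ with kernel built from $G$, arranged so that $\op{B}\op{A} = \op{K}|_{L_2(a,+\infty)}$; then $\op{A}\op{B}$ is an operator on $L_2(\tilde{\gamma})$ whose kernel is exactly $H_a(z,s)$ after one computes the composition.

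The key computation is then: the kernel of $\op{A}\op{B}$ at $(z,s)\in\tilde{\gamma}\times\tilde{\gamma}$ is obtained by integrating over the intermediate variable. Using the contour integral representations~\eqref{G45} and~\eqref{G46} (with the $x$-variable replaced by the relevant contour points and the auxiliary variable $q$ shifted to run over $(a,+\infty)$ instead of $(0,+\infty)$), the composition produces an integral of the form
\begin{equation}
  H_a(z,s) = \int\limits_a^{+\infty} dq\;\frac{1}{2\pi i}\int\limits_{\gamma}dt\,\frac{\Gamma(t)}{\Gamma(s)}\,e^{-\frac{\alpha t^2}{2} + \frac{\alpha s^2}{2}}\,e^{-(q-z)t}\,e^{-(q-?)\cdots}\,\cdots,
\end{equation}
where the $q$-integral is an elementary exponential integral that converges because $\re(t) < 0$ on $\gamma$ and evaluates (after shifting by $a$) to a factor $e^{a(t-\cdots)}/(\cdots)$ contributing the denominators $(s-t)$ and $(z-t)$. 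Carrying out the $q$-integration and bookkeeping the $\alpha$-dependent Gaussian factors should reproduce exactly~\eqref{eq_kern_h}, including the prefactor $-1/(4\pi^2)$ coming from the two factors of $1/(2\pi i)$ and the sign from orienting the contours. I would need to be careful about which of $G$ or $\widetilde{G}$ supplies which denominator and about the exact form of the conjugating exponentials $e^{\pm(x-y)/2}$ from~\eqref{Knew}, tracking how they combine with the $q$-shift.

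Finally, I would verify that $\op{H}_a$ is genuinely trace class on $L_2(\tilde{\gamma})$: since $\op{A}$ and $\op{B}$ are each Hilbert--Schmidt (inherited from the Hilbert--Schmidt bounds~\eqref{GTILDEESTIMATE} and~\eqref{E411} on $\widetilde{G}$ and $G$, restricted to $q>a$, together with the Gaussian decay of the contour integrands along $\tilde{\gamma}$), their product $\op{A}\op{B}$ is trace class, and the identity $\fdet{\op{I}-\op{B}\op{A}} = \fdet{\op{I}-\op{A}\op{B}}$ for trace-class products of Hilbert--Schmidt operators then gives the claim rigorously. The main obstacle I anticipate is purely computational rather than conceptual: getting the $q$-integral and all the exponential/Gaussian factors to line up precisely with~\eqref{eq_kern_h}, in particular correctly distributing the $\frac{\alpha}{4}(z^2 + s^2 - 2t^2)$ in the exponent — this requires splitting $\frac{\alpha}{2}(s^2 - t^2)$ and the $q$-dependent pieces symmetrically, and making sure the shift $q \mapsto q + a$ deposits the factor $e^{a(t-z)}$ (and no spurious $s$-dependence) in the right place. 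A secondary point to handle carefully is justifying the interchange of the $q$-integral with the $t$-contour integral, which follows from the same absolute-convergence estimates used in Section~\ref{Section5}.
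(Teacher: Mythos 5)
Your outline is correct and it is a genuinely different, considerably shorter route than the paper's. The paper proves the lemma by first rewriting the indicator $1_{(a,+\infty)}(x)$ as a principal-value contour integral (its identity~\eqref{Identity61}), then expressing $\chi_{(a,+\infty)}\op{K}$ as a long conjugation $\op{R}^{-1}\mathcal{F}^{-1}\op{U}^{-1}\op{M}^{-1}\op{H}_a\op{M}\op{U}\mathcal{F}\op{R}$ in which $\op{R}^{-1}$ and $\op{M}^{-1}$ are unbounded; it then has to approximate these by cut-offs $\chi_{(0,N)}$ and $\chi_{(1-iN,1+iN)}$ and pass to the limit in the trace norm. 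Your plan — factor $\op{K}|_{L_2(a,+\infty)}=\op{B}\op{A}$ through the intermediate space $L_2(\tilde\gamma)$ and apply $\fdet{\op{I}-\op{B}\op{A}}=\fdet{\op{I}-\op{A}\op{B}}$ for Hilbert--Schmidt $\op{A},\op{B}$ — bypasses all of that machinery; the $q$-integral over $(a,+\infty)$ supplies $e^{a(t-z)}/(z-t)$ as you anticipate. Two caveats. First, the operators you want are not ``built from $G$ and $\widetilde{G}$'': those kernels live on $(0,+\infty)^2$ and never see $\tilde\gamma$. You must read $\op{A}$ and $\op{B}$ directly off the double-contour formula~\eqref{K41}, e.g.
\begin{equation*}
A(s,y)=\frac{1}{2\pi i}\,e^{\frac{\alpha s^2}{4}-y\left(s-\frac{1}{2}\right)},\qquad
B(x,s)=\frac{1}{2\pi i}\,\frac{e^{\frac{\alpha s^2}{4}}}{\Gamma(s)}\int\limits_{\gamma}\frac{\Gamma(t)\,e^{x\left(t-\frac12\right)-\frac{\alpha t^2}{2}}}{s-t}\,dt,
\end{equation*}
with $s\in\tilde\gamma$, $x,y\in(a,+\infty)$. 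Second — and this is exactly the concern you flag when you speak of ``splitting $\frac{\alpha}{2}(s^2-t^2)$ symmetrically'' — the Gaussian $e^{\alpha s^2/2}$ \emph{must} be distributed as $e^{\alpha s^2/4}\cdot e^{\alpha s^2/4}$ between $A$ and $B$. If you instead make the naive split (all of $e^{\alpha s^2/2}/\Gamma(s)$ on one side), both operators are still Hilbert--Schmidt and $\op{B}\op{A}=\op{K}|_{L_2(a,+\infty)}$ still holds, but the resulting kernel of $\op{A}\op{B}$ is $H_a$ conjugated by multiplication by $\Gamma(s)e^{-\alpha s^2/4}$, which is \emph{unbounded} on $\tilde\gamma$ — and then you are back to the very difficulty the paper's truncation argument exists to handle. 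With the symmetric split, $\op{A}\op{B}$ has kernel exactly $H_a(z,s)$ and the cyclic identity finishes the proof cleanly.
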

\begin{proof}
  Let us first check that~$\op{H}_a$ is indeed of trace class. Set
  \begin{equation}
    \label{63a}
    A_a(z,t) = \frac{\Gamma{(t)}}{2\pi i} \frac{e^{-a(z-t)}}{z-t} \myexp{\frac{\alpha}{4}(z^2-t^2)},
  \end{equation}
  where~$z\in\tilde{\gamma}$, $t\in\gamma$, and set
  \begin{equation}
    \label{63b}
    B(t,s) = \frac{1}{2\pi i} \frac{1}{(s-t)\Gamma{(s)}} \myexp{\frac{\alpha}{4}(s^2-t^2)}.
  \end{equation}
  where~$t\in\gamma$, $s\in\tilde{\gamma}$. It is not hard to see that
  \begin{equation}
    \int\limits_{\tilde{\gamma}}\int\limits_{\gamma}\left|A_a(z,t)\right|^2|dz||dt|<\infty,\quad \int\limits_{\gamma}\int\limits_{\tilde{\gamma}}|B(t,s)|^2|dt||ds|<\infty.
  \end{equation}
  Therefore~\eqref{63a} and~\eqref{63b} are kernels of the Hilbert--Schmidt operators~$\op{A}_a\!: L_2(\gamma)\to L_2(\tilde{\gamma})$ and $\op{B}\!: L_2\left(\tilde{\gamma}\right)\to L_2(\gamma)$. Also, clearly
  \begin{equation}
    \label{eq:H_comp_AB}
    \op{H}_a=\op{A}_a\op{B},
  \end{equation}
  which implies that~$\op{H}_a$ is of trace class.

  Next, observe that the following identity holds
  \begin{equation}
    \label{Identity61}
    {1}_{(0,+\infty)}(x)e^{xt} =\frac{1}{2\pi i}{\lim\limits_{R\to+\infty}} \int\limits_{b-iR}^{b+iR} \frac{e^{xz} \, dz}{z-t}
    =\frac{1}{2\pi i}{\vp}\int\limits_{b-i\infty}^{b+i\infty}\frac{e^{xz} \, dz}{z-t},
  \end{equation}
  where~$t\in\gamma$, $b>\re{t}$, and~$x \ne 0$. The standard way of proving this formula is to consider two cases, $x>0$ and~$x<0$, to deform the contours into half-circles in the left and right half planes, respectively, and finally to apply the residue theorem together with Jordan's lemma.

  Using~\eqref{Identity61} and~\eqref{K41}, we obtain
  \begin{equation}
    \label{K62}
    \begin{aligned}
      &1_{(a,+\infty)}(x) K(x,y)\\
      &=\frac{1}{(2\pi i)^2} \int \limits_{\tilde{\gamma}} ds \int\limits_{\gamma} dt \, \frac{\Gamma{(t)}}{\Gamma{(s)}} \frac{\exp{\Big(\frac{\alpha s^2}{2}-y\big(s-\frac{1}{2}\big)\Big)}}{\exp{\Big(\frac{\alpha t^2}{2}+\frac{x}{2}\Big)}} \frac{e^{a t}}{(s-t)}
      \left(\frac{1}{2\pi i}{\vp}\int\limits_{\tilde{\gamma}}\frac{e^{(x-a)z}\, dz}{z-t}\right).
    \end{aligned}
  \end{equation}
  Define
  \begin{equation}
    g(s,t,z)=\frac{\Gamma{(t)}}{\Gamma{(s)}} \frac{\exp{\Big(\frac{\alpha s^2}{2}-y\big(s-\frac{1}{2}\big)\Big)}}{\exp{\Big(\frac{\alpha t^2}{2}+\frac{x}{2}\Big)}} \frac{e^{a(t-z)}}{(s-t)(z-t)} e^{xz}.
  \end{equation}
  The equation~\eqref{K62} takes the form
  \begin{equation}
    1_{(a,+\infty)}(x) K(x,y)=\frac{1}{(2\pi i)^3}  \int \limits_{\tilde{\gamma}} ds \int \limits_{\gamma} dt \left({\vp}\int\limits_{\tilde{\gamma}}g(s,t,z)\, dz\right).
  \end{equation}

  Let us prove that
  \begin{equation}\label{64}
    \int \limits_{\tilde{\gamma}} ds \int \limits_{\gamma} dt
    \left({\vp}\int\limits_{\gamma} g(s,t,z)\, dz\right)
    =\vp\int\limits_{\tilde{\gamma}}dz \int \limits_{\tilde{\gamma}} ds
    \int\limits_{\gamma}g(s,t,z)\, dt.
  \end{equation}
  First, suppose that~$x<a$.  Then the identity~\eqref{Identity61} implies that the left-hand side of~\eqref{64} is zero. To see that the right-hand side of~\eqref{64} is also zero, we complete the $z$-contour~$\tilde{\gamma}$ by a half-circle with~$\re{z} \ge 1$ and apply Jordan's lemma, taking into account that
  \begin{equation}
    z\mapsto\int\limits_{\tilde{\gamma}}ds\int\limits_{\gamma}g(s,t,z)\, dt 
  \end{equation}
  is analytic for~$\re{z} > 1$ and continuous up to the boundary~$\re{z}=1$. Now, suppose that~$x>a$. Deform the contour~$\tilde{\gamma}$ in the $z$-plane into two rays coming from/going to the left half plane, not intersecting~$\gamma$, and meeting at~$z=1$. The direction on the rays is chosen coherently with that on~$\tilde{\gamma}$. This deformation renders the integral on the right-hand side of~\eqref{64} absolutely convergent, and thus Fubini's theorem allows us to change the order of the integrals. Finally, we deform the rays back into their original form and conclude that the identity~\eqref{64} also holds for~$x>a$.

  We arrive at
  \begin{equation}
    \label{K65}
    {1}_{(a,+\infty)}(x) K(x,y)=\frac{1}{(2\pi i)^3} 
    \vp\int\limits_{\tilde{\gamma}} dz \int \limits_{\tilde{\gamma}}ds \int \limits_{\gamma} dt\;\frac{\Gamma{(t)}}{\Gamma{(s)}} \frac{\exp{\Big(\frac{\alpha s^2}{2}-y\big(s-\frac{1}{2}\big)\Big)}}{\exp{\Big(\frac{\alpha t^2}{2}+\frac{x}{2}\Big)}} \frac{e^{a(t-z) + xz}}{(s-t)(z-t)}.
  \end{equation}
  This formula enables us to write~$\op{K}|_{L_2(a,+\infty)}$ as the following composition
  \begin{equation}
    \label{OperatorEquation}
    \op{K}|_{L_2(a,+\infty)}=\op{R}^{-1}\mathcal{F}^{-1}\op{U}^{-1}\op{M}^{-1}\op{H}_a\op{M}\op{U}\mathcal{F}\op{R};
  \end{equation}
  The operator~$\op{R}\!: L_2(a,+\infty) \to L_2(-\infty,+\infty)$ is defined by
  \begin{equation}
    \left(\op{R}[f]\right)(x)=e^{-\frac{x}{2}}\widetilde{f}(x),  
  \end{equation}
  where~$\widetilde{f}$ is the extension of~$f$ from~$(a,+\infty)$ to~$(-\infty,+\infty)$ by zero; the operator~$\mathcal{F}$ on~$L_2(-\infty,+\infty)$ is the Fourier transform
  \begin{equation}
    \left(\mathcal{F}[g]\right)(x)=\lim\limits_{R\to +\infty}\int_{-R}^Rg(y)e^{-ixy}dy;
  \end{equation}
  the operator~$\op{U}\!:L_2(-\infty,+\infty) \to L_2\left(\tilde{\gamma}\right)$ is defined by
  \begin{equation}
    \left(\op{U}[f]\right)(s)=g(s), \quad s\in \tilde{\gamma},
  \end{equation}
  where~$g(1+iy)=f(y)$; and finally, the operator~$\op{M}$ on~$L_2(\tilde{\gamma})$ is defined by
  \begin{equation}
    \label{eq:opM_def}
    \left(\op{M}[f]\right)(s)=e^{\frac{\alpha s^2}{4}}f(s).
  \end{equation}

  The operators~$\op{U}$ and~$\mathcal{F}$ are clearly unitary and their inverses~$\op{U}^{-1}$ and~$\mathcal{F}^{-1}$ are well-defined. The operators~$\op{R}$ and~$\op{M}$ are bounded, but their (right) inverses~$\op{R}^{-1}$ and~$\op{M}^{-1}$ are not. In particular, the domains of~$\op{R}^{-1}$ and~$\op{M}^{-1}$ are strict subsets of~$L_2(-\infty, +\infty)$ and~$L_2(\tilde{\gamma})$, respectively.

  We are going to need the following well-known property of trace-class operators. Let~$\op{A}$ be a trace-class operator acting on a complex separable Hilbert space~$\mathcal{H}$. Let~$\left\{\op{T}_n\right\}$ and~$\left\{\op{S}_n\right\}$ be sequences of bounded linear operators on~$\mathcal{H}$ such that~$\op{T}_n\to T$ and~$\op{S}_n\to S$ pointwise (i.e., in strong operator topology). Then, $\op{T}_n\op{A}\op{S}^{*}_n\to \op{T}\op{A}\op{S}$ in the trace-class norm (e.g., see Gohberg, Gohberg, and Krupnik~\cite[Theorem 11.3]{Gohberg}). This can be generalized to the case of bounded operators between different Hilbert spaces.
  
  The continuity of a Fredholm determinant in the trace-class norm and the property described above yield the identity
  \begin{equation}
    \label{eq:621}
    \fdet{\op{I}-\op{K}|_{L_2(a,+\infty)}} = \lim_{N \to \infty} 
    \fdet{\op{I} - \chi_{(a,N)}\op{R}^{-1}\mathcal{F}^{-1}\op{U}^{-1}\op{M}^{-1}\op{H}_a \op{M}\op{U}\mathcal{F}\op{R}},
  \end{equation}
  where~$\chi_{(a,N)}$ is the multiplication by the indicator~$1_{(a,N)}$ of the interval~$(a,N)$. Now, the operator~$\chi_{(a,N)}\op{R}^{-1}$ becomes bounded.  Since the left-hand side of~\eqref{OperatorEquation} is well-defined, the range of~$\mathcal{F}^{-1}\op{U}^{-1}\op{M}^{-1}\op{H}_a\op{M}\op{U}\mathcal{F}\op{R}$ must be in the domain of~$\op{R}^{-1}$ and thus in that of~$\chi_{(a,N)}\op{R}^{-1}$. Consequently, we can naturally extend the domain of~$\chi_{(a,N)} \op{R}^{-1}$ to~$L_2(-\infty,\infty)$ without changing the determinant. We keep using the same notation for the extended operator.
  
  Note that it follows from~\eqref{OperatorEquation} that
  \begin{equation}
    \chi_{(a,N)}\op{R}^{-1}\mathcal{F}^{-1}\op{U}^{-1}\op{M}^{-1}\op{H}_a \op{M}\op{U}\mathcal{F}\op{R} = \chi_{(a,N)} \op{K}|_{L_2(a,+\infty)},
  \end{equation}
  thus the operator on the left-hand side of this formula is of trace class, the right-hand side being a composition of a bounded and a trace-class operator. Also, by writing out~\eqref{eq:H_comp_AB} via~\eqref{63a}--\eqref{63b} and by using~\eqref{eq:opM_def}, one can easily prove that
  \begin{equation}
    \op{M}^{-1} \op{H}_a \op{M}
  \end{equation}
  is of trace class, and consequently so is~$\mathcal{F}^{-1}\op{U}^{-1}\op{M}^{-1}\op{H}_a \op{M}\op{U}\mathcal{F}$. This together with a general fact that if~$\op{A}$ and~$\op{B}$ are bounded linear operators such that both~$\op{A}\op{B}$ and~$\op{B}\op{A}$ are of trace class, then~$\fdet{\op{I}+\op{A}\op{B}}=\fdet{\op{I}+\op{B}\op{A}}$, shows that
  \begin{equation}
    \fdet{\op{I}-\op{K}|_{L_2(a,+\infty)}} = \lim_{N \to \infty}
    \fdet{\op{I} - \op{R}\chi_{(a,N)}\op{R}^{-1}\mathcal{F}^{-1}\op{U}^{-1}\op{M}^{-1}\op{H}_a \op{M}\op{U}\mathcal{F}}.
  \end{equation}
  Clearly,
  \begin{equation}
    R\chi_{(a,N)}R^{-1} =\chi_{(a, N)},
  \end{equation}
  and we have
  \begin{equation}
    \fdet{\op{I}-\op{K}|_{L_2(a,+\infty)}} = \lim_{N \to \infty}
    \fdet{\op{I} - \chi_{(a,N)}\mathcal{F}^{-1}\op{U}^{-1}\op{M}^{-1}\op{H}_a \op{M}\op{U}\mathcal{F}}.
  \end{equation}
  Using the continuity of Fredholm determinants once again, we find that
  \begin{equation}
    \fdet{\op{I}-\op{K}|_{L_2(a,+\infty)}} = \fdet{\op{I} - \mathcal{F}^{-1}\op{U}^{-1}\op{M}^{-1}\op{H}_a \op{M}\op{U}\mathcal{F}}.
  \end{equation}
  Now, recall that~$\mathcal{F}$ and~$\op{U}$ are unitary and thus do not change the spectra of operators. This means that the determinants are preserved under unitary conjugations, and we have
  \begin{equation}
    \fdet{\op{I}-\op{K}|_{L_2(a,+\infty)}} = \fdet{\op{I} -\op{M}^{-1}\op{H}_a \op{M}}.
  \end{equation}

  Finally, the argument used above to eliminate~$\op{R}$ carries over, mutatis mutandis, to the case of~$\op{M}$. We have
  \begin{equation}
    \begin{aligned}
      \fdet{\op{I}-\op{K}|_{L_2(a,+\infty)}}  &=\lim_{N \to \infty}\fdet{\op{I} -\chi_{(1-iN,1+iN)}\op{M}^{-1}\op{H}_a \op{M}}\\
      &=\lim_{N \to \infty}\fdet{\op{I} -\op{M}\chi_{(1-iN,1+iN)}\op{M}^{-1}\op{H}_a} \\
      &=\lim_{N \to \infty}\fdet{\op{I} -\chi_{(1-iN,1+iN)}\op{H}_a}=\fdet{\op{I}-\op{H}_a}.
    \end{aligned}
  \end{equation}
  This concludes the proof of the lemma.
\end{proof}

\begin{proof}[Proof of Theorem \ref{TheoremGapProbabilityIntegrableOperator}]
  In order to prove the theorem,  we use the formula~\eqref{eq:H_comp_AB} with~$\op{A}_a$ and~$\op{B}$ defined by the kernels~\eqref{63a} and~\eqref{63b}, respectively.

  Let us first show that both~$\op{A}_a$ and~$\op{B}$ are of trace class. We write each of them as a composition of Hilbert--Schmidt operators.

  Introduce operators~$\op{A}^{(1)}_a\!: L_2(0,+\infty) \to L_2(\tilde{\gamma})$ and~$\op{A}^{(2)}_a\!: L_2(\tilde{\gamma}) \to L_2(0,+\infty)$ by their kernels,
  \begin{equation}
    A^{(1)}_a(z,w) = \frac{1}{2 \pi i} \myexp{-w\left(z-\frac{3}{4}\right) -a z +  \frac{\alpha z^2}{4}},
  \end{equation}  
  and
  \begin{equation}
    A^{(2)}_a(w,t) = \myexp{w\left(t-\frac{3}{4}\right) +a t
      -\frac{\alpha t^2}{4}} \Gamma(t),
  \end{equation}
  where~$w>0$, $z\in\tilde{\gamma}$, and~$t\in\gamma$.  It is not hard to check that
  \begin{equation}
    A^{(1)}_a(z,w) = O(e^{-C|z|^2 -\frac{w}{4}}), \quad A^{(2)}_a(w,t) = O(e^{-C|t|^2 -\frac{w}{4}})
  \end{equation}
  for some~$C>0$, as~$z,t, w \to \infty$, $z \in \tilde{\gamma}$, $t\in\gamma$, $w > 0$. And we conclude that both~$\op{A}^{(1)}$ and~$\op{A}^{(2)}$ are Hilbert--Schmidt.

  By a direct computation, we see that
  \begin{equation}
    \op{A}_a=\op{A}^{(1)}_a\op{A}^{(2)}_a, 
  \end{equation}
  which justifies that~$\op{A}_a$ is a trace-class operator.

  In a similar way, we introduce operators~$\op{B}^{(1)}\!: L_2\left(0,+\infty\right) \to L_2(\gamma)$ and~$\op{B}^{(2)}\!: L_2(\tilde{\gamma}) \to L_2\left(0,+\infty\right)$ by their kernels
  \begin{equation}
    B^{(1)}(t,w) = \frac{1}{2 \pi i} \myexp{w\left(t-\frac{3}{4}\right) 
      - \frac{\alpha t^2}{4}},
  \end{equation}  
  and
  \begin{equation}
    B^{(2)}(w,s) = \myexp{-w\left(s-\frac{3}{4}\right) +\frac{\alpha s^2}{4}} \left(\Gamma(s)\right)^{-1}.
  \end{equation}
  We have
  \begin{equation}
    B^{(1)}(t,w)=O\left(e^{-C|t|^2-\frac{w}{4}}\right), \quad B^{(2)}(w,s)=O\left(e^{-C|s|^2-\frac{w}{4}}\right)
  \end{equation}
  as~$t, s, w \to \infty$, $t\in\gamma$, $s \in \tilde{\gamma}$, $w>0$. Again, a direct computation using kernels shows that
  \begin{equation}
    \op{B}=\op{B}^{(1)}\op{B}^{(2)},
  \end{equation}
  and we conclude that~$\op{B}$ is of trace class. 

  The final step of the proof is as follows. Write 
  \begin{equation}
    \label{eq:det_idents}
    \begin{aligned}
      \fdet{\op{I} - \op{H}_a} &= \fdet{\op{I} -\op{A}_a\op{B}} = \fdet{
        \begin{pmatrix}
          \op{I}_1 & 0\\
          0 & \op{I}_2
        \end{pmatrix}
        -
        \begin{pmatrix}
          \op{A}_a\op{B} & 0\\
          0 & 0
        \end{pmatrix}
      }\\
      &= \fdet{
        \begin{pmatrix}
          \op{I}_1 & 0\\
          0 & \op{I}_2
        \end{pmatrix}
        -
        \begin{pmatrix}
          \op{A}_a\op{B} & 0\\
          \op{B} & 0
        \end{pmatrix}
      }\\
      &=\fdet{\left(
          \begin{pmatrix}
            \op{I}_1 & 0\\
            0 & \op{I}_2
          \end{pmatrix}
          +
          \begin{pmatrix}
            0 & \op{A}_a\\
            0 & 0
          \end{pmatrix}
        \right)\left(
          \begin{pmatrix}
            \op{I}_1 & 0\\
            0 & \op{I}_2
          \end{pmatrix}
          -
          \begin{pmatrix}
            0 & \op{A}_a\\
            \op{B} & 0
          \end{pmatrix}
        \right)
      }\\
      &= \fdet{
        \begin{pmatrix}
          \op{I}_1 & 0\\
          0 & \op{I}_2
        \end{pmatrix}
        +
        \begin{pmatrix}
          0 & \op{A}_a\\
          0 & 0
        \end{pmatrix}
      }\fdet{
        \begin{pmatrix}
          \op{I}_1 & 0\\
          0 & {I}_2
        \end{pmatrix}
        -
        \begin{pmatrix}
          0 & \op{A}_a\\
          \op{B} & 0
        \end{pmatrix}
      }\\
      &=\fdet{
        \begin{pmatrix}
          \op{I}_1 & 0\\
          0 & \op{I}_2
        \end{pmatrix}
        -
        \begin{pmatrix}
          0 & \op{A}_a\\
          \op{B} & 0
        \end{pmatrix}
      },
    \end{aligned}
  \end{equation}
  where~$\op{I}_1$ and~$\op{I}_2$ are identity operators on~$L_2(\tilde{\gamma})$ and~$L_2(\gamma)$, respectively. The identities above  make sense since all involved  operators  are trace-class, as proved earlier.

  Since~$\gamma \cap \tilde{\gamma} = \varnothing$, we can use an isometric isomorphism between~$L_2(\tilde{\gamma}) \oplus L_2(\gamma)$ and~$L_2(\tilde{\gamma} \cup \gamma)$ 
  \begin{equation}
    f_1 \oplus f_2 \mapsto {1}_{\tilde{\gamma}} \tilde{f}_1 
    + {1}_{\gamma}\tilde{f}_2, 
  \end{equation}
  where~$\tilde{f}_1$ and~$\tilde{f}_2$ are the extensions of~$f_1$ and~$f_2$ to the domain~$\tilde{\gamma} \cup \gamma$ by zero.

  For the operator in the last determinant~\eqref{eq:det_idents}, this isomorphism yields
  \begin{equation}
    \begin{pmatrix}
      0 & \op{A}_a\\
      \op{B} & 0
    \end{pmatrix}
    \mapsto \chi_{\tilde{\gamma}} \widetilde{\op{A}} + \chi_{\gamma} \widetilde{\op{B}} \overset{\mbox{def}}{=}\op{Q}_a,
  \end{equation}
  where~$\widetilde{A}$ and~$\widetilde{B}$ are extensions of~$\op{A}$ and~$\op{B}$ to~$L_2(\tilde{\gamma} \cup \gamma)$ by zero, and where~$\chi_{\gamma}$ and~$\chi_{\tilde{\gamma}}$ are the multiplication operators by the indicators~$1_{\gamma}$ and~$1_{\tilde{\gamma}}$.

  The kernel of~$\op{Q}_a$ is
  \begin{equation}
    Q_a(x,y) = A_a(x,y) {1}_{\tilde{\gamma}}(x) {1}_{\gamma}(y) + B(x,y) {1}_{\tilde{\gamma}}(y) {1}_{\gamma}(x),
  \end{equation}
  and~$A_a(x,y)$ and~$B(x,y)$ are given by~\eqref{63a} and \eqref{63b}.

  We end up with the formula~\eqref{KernelQa} for~$Q(x,y)$. This concludes the proof of the theorem.
\end{proof}


\section{Gap probability in terms of the Riemann--Hilbert problem}
\label{Section7}
In this section we represent the gap probability~$\mathcal{P}(a)$ in terms of the solution of~\ref{problem_y-rh}. Due to our Theorem~\ref{TheoremGapProbabilityIntegrableOperator}, we know that~$\mathcal{P}(a)$ can be written as the Fredholm determinant of the operator~$\op{Q}_a$, so it remains to show that this determinant can be laid out in terms of the solution of~\ref{problem_y-rh}.

We start off by proving an auxiliary lemma.
\begin{lem}
  \label{Lemma71}
  There exists some constant~$C>0$ such that
  \begin{equation}
    \label{In72}
    |1-\mathcal{P}(a)| = \left|1-\fdet{\op{I}-\op{K}|_{L_2\left(a,+\infty\right)}}\right|\leq C e^{-\frac{a}{2}},\quad a>0.
  \end{equation}

  Moreover, for all~$a \in \mathbb{R}$ the gap probability~$\mathcal{P}(a)$ is strictly positive and~$(\op{I}-\op{Q}_a)$ has an inverse.
\end{lem}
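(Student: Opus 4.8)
The plan is to estimate the Fredholm determinant $\fdet{\op{I}-\op{K}|_{L_2(a,+\infty)}}$ directly using the trace-class bound from Proposition~\ref{PropositionTraceClass}. Recall the standard inequality for Fredholm determinants of trace-class operators: $\left|\fdet{\op{I}+\op{T}}-1\right|\le \|\op{T}\|_1 e^{\|\op{T}\|_1}$, where $\|\cdot\|_1$ denotes the trace-class norm. Thus it suffices to show that $\|\chi_{(a,+\infty)}\op{K}\|_1 \le C' e^{-a/2}$ for large $a$. First I would use the factorization $\op{K}=\op{G}\widetilde{\op{G}}$ established in the proof of Proposition~\ref{PropositionTraceClass}, so that $\chi_{(a,+\infty)}\op{K}=\left(\chi_{(a,+\infty)}\op{G}\right)\widetilde{\op{G}}$, and bound $\|\chi_{(a,+\infty)}\op{K}\|_1 \le \|\chi_{(a,+\infty)}\op{G}\|_2\,\|\widetilde{\op{G}}\|_2$, the product of Hilbert--Schmidt norms. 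From the pointwise bounds $G(x,q)=O(e^{-(x+q)/2})$ and $\widetilde{G}(q,y)=O(e^{-(y+q)/2})$ proved earlier (equations~\eqref{E411} and~\eqref{GTILDEESTIMATE}), one computes
\begin{equation}
  \|\chi_{(a,+\infty)}\op{G}\|_2^2 = \int\limits_a^{+\infty}\!\int\limits_0^{+\infty} |G(x,q)|^2\,dq\,dx = O\!\left(\int\limits_a^{+\infty} e^{-x}dx\right) = O\!\left(e^{-a}\right),
\end{equation}
while $\|\widetilde{\op{G}}\|_2$ is a finite constant. Hence $\|\chi_{(a,+\infty)}\op{K}\|_1 = O(e^{-a/2})$, which gives \eqref{In72}.

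For the second assertion, strict positivity of $\mathcal{P}(a)$ for \emph{all} $a>0$ (not just large $a$): by \eqref{In72}, there is $a_0>0$ such that $\mathcal{P}(a)>1/2>0$ for $a\ge a_0$. For $0<a<a_0$ one notes that $\mathcal{P}(a)$ is a probability, namely $\prob[\mathscr{X}_{\crit}]{x_{\mathrm{rm}}\le a}$ as in Section~\ref{SectionDiscussionDistributionFunction}, hence $\mathcal{P}(a)\ge 0$ automatically; but more is needed to rule out $\mathcal{P}(a)=0$. I would argue that $\mathcal{P}(a)$ is monotone nondecreasing in $a$ (as a distribution function) and that the operator $\chi_{(a,+\infty)}\op{K}$ is a positive, self-adjoint (in the appropriately conjugated picture $\op{K}$ arises from a correlation kernel of a determinantal process, hence its restriction is a nonnegative contraction with spectrum in $[0,1]$), so that $\fdet{\op{I}-\chi_{(a,+\infty)}\op{K}}=\prod_j(1-\lambda_j)$ with $0\le\lambda_j\le 1$; the product is strictly positive unless some eigenvalue equals exactly $1$. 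An eigenvalue equal to $1$ would force $\mathcal{P}(a)=0$, i.e. almost surely at least one particle in $(a,+\infty)$; but since $\Tr{(\chi_{(a,+\infty)}\op{K})}=\mexp{\#_{(a,+\infty)}}<\infty$ and in fact $\to 0$ as $a\to\infty$, and since for large $a$ we already know $\mathcal{P}(a)>0$, monotonicity of the eigenvalues $\lambda_j=\lambda_j(a)$ (decreasing in $a$, again because $\chi_{(a',+\infty)}\le\chi_{(a,+\infty)}$ for $a'>a$ gives an operator inequality $0\le\chi_{(a',+\infty)}\op{K}\chi_{(a',+\infty)}\le \chi_{(a,+\infty)}\op{K}\chi_{(a,+\infty)}$ on the symmetrized operator) would be needed to propagate positivity down from large $a$ — however this is not quite automatic. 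The cleaner route, which I would actually take, is: the symmetrized operator $\chi_{(a,+\infty)}\sqrt{\op{K}}$ — or rather $\sqrt{K_{\mathrm{sym}}}\,\chi\,\sqrt{K_{\mathrm{sym}}}$ built from the genuinely self-adjoint correlation kernel $K_{\crit}$ restricted to $(a,+\infty)$ — is a trace-class operator with norm $<1$ for \emph{all} $a>0$, because the operator norm is bounded by the trace, and the trace $\int_a^\infty K_{\crit}(x,x)\,dx$ is finite; combined with the fact that a determinantal correlation kernel always has spectrum in $[0,1]$ on any subdomain, if the trace is finite then the largest eigenvalue is at most the trace, and if it is also $<1$ we are done, otherwise we use that the eigenvalue $1$ cannot occur because the expected number of particles being finite forces the probability of the empty configuration to be positive by a second-moment / Markov argument exactly as in Section~\ref{SectionDiscussionDistributionFunction}. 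I expect this positivity-for-all-$a$ step to be the main obstacle, since the easy trace-norm estimate only handles large $a$, and one has to invoke the determinantal structure (spectrum in $[0,1]$) plus a soft probabilistic argument to cover the remaining range.

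Finally, invertibility of $(\op{I}-\op{Q}_a)$: by Theorem~\ref{TheoremGapProbabilityIntegrableOperator} we have $\fdet{\op{I}-\op{Q}_a}=\mathcal{P}(a)$, and $\op{Q}_a$ is trace-class (proved in that theorem). For a trace-class operator, $\op{I}-\op{Q}_a$ is invertible if and only if $\fdet{\op{I}-\op{Q}_a}\ne 0$ (Fredholm alternative for $\op{I}+$ trace-class). Since we have just shown $\mathcal{P}(a)>0$ for all $a>0$, it follows that $\fdet{\op{I}-\op{Q}_a}\ne 0$, hence $(\op{I}-\op{Q}_a)$ is invertible. This completes the proof.
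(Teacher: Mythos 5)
Your bound on the tail $\left|1-\fdet{\op{I}-\op{K}|_{L_2(a,+\infty)}}\right|$ is correct and takes a genuinely different route from the paper's. You dominate the trace norm $\|\chi_{(a,+\infty)}\op{K}\|_1$ by the product of Hilbert--Schmidt norms $\|\chi_{(a,+\infty)}\op{G}\|_2\,\|\widetilde{\op{G}}\|_2$ coming from the factorization $\op{K}=\op{G}\widetilde{\op{G}}$ of Section~\ref{Section5}, and then invoke $\left|\fdet{\op{I}+\op{T}}-1\right|\leq\|\op{T}\|_1 e^{\|\op{T}\|_1}$. The paper instead rewrites the determinant with respect to the weight $\nu(dx)=e^{-x/2}\,dx$ and the conjugated kernel $\widetilde{K}(x,y)=K(x,y)e^{(x+y)/4}$, and applies the Hadamard-type Fredholm-series estimate of Anderson--Guionnet--Zeitouni (their Lemma~3.4.5) in terms of $\sup_{x,y>a}|\widetilde{K}(x,y)|$. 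Both yield the exponent $e^{-a/2}$; your trace-norm route is arguably cleaner, and would in fact give $O(e^{-a})$ if you also placed the projection $\chi_{(a,+\infty)}$ on the right side of $\op{K}$.

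The strict positivity of $\mathcal{P}(a)$ for \emph{all} $a>0$, however, has a genuine gap, and you are right to flag it as the main obstacle. Your plan is to use the determinantal characterization $0\le\op{K}\le 1$ on any subinterval, write $\mathcal{P}(a)=\prod_j(1-\lambda_j(a))$ with $\lambda_j(a)\in[0,1]$, and then rule out an eigenvalue equal to $1$. Neither of the two mechanisms you sketch closes this: the eigenvalues $\lambda_j(a)$ \emph{increase} as $a$ decreases (because $\chi_{(a',+\infty)}\op{K}\chi_{(a',+\infty)}\le\chi_{(a,+\infty)}\op{K}\chi_{(a,+\infty)}$ for $a<a'$), so positivity at large $a$ does not propagate downward; and finiteness of the trace $\int_a^\infty K_{\crit}(x,x)\,dx$ does \emph{not} forbid a unit eigenvalue --- a rank-one projection has trace $1$, top eigenvalue $1$, and corresponds to a determinantal process that almost surely has exactly one particle in the interval, hence $\mathcal{P}=0$. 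Markov's inequality only shows the particle count is a.s.\ finite, not that the empty configuration has positive probability. The paper's proof closes the gap differently: by Theorem~\ref{TheoremGapProbabilityIntegrableOperator}, $\mathcal{P}(a)=\fdet{\op{I}-\op{Q}_a}$ where the kernel $Q_a(x,y)$ in~\eqref{KernelQa} is entire in $a$, so $\mathcal{P}(a)$ is (real-)analytic in $a$; combined with the fact that $\mathcal{P}$ is a nondecreasing distribution function, a single zero $a_*>0$ would force $\mathcal{P}\equiv 0$ on $(0,a_*]$, hence everywhere by the identity theorem, contradicting $\mathcal{P}(a)\to 1$ as $a\to+\infty$. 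That analyticity-plus-monotonicity argument is the missing ingredient. Your final step, deducing invertibility of $\op{I}-\op{Q}_a$ from $\fdet{\op{I}-\op{Q}_a}=\mathcal{P}(a)\ne 0$ via the Fredholm alternative, is correct and matches the paper once the positivity is in place.
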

\begin{proof}First, we note that
  \begin{equation}
    \label{eq72}
    \fdet{\op{I}-\op{K}|_{L_2\left(a,+\infty\right)}}  =\fdet[\nu]{\op{I}-\widetilde{\op{K}}|_{L_2\left(a,+\infty\right)}},
  \end{equation}
  where~$\widetilde{K}(x,y)=K(x,y)e^{\frac{x+y}{4}}$, and the Fredholm determinant on the right-hand side of~\eqref{eq72} is defined with respect to the measure~$\nu(dx)=e^{-x/2}\, dx$.

  Next, Lemma~3.4.5 from Anderson, Guionnet, and Zeitouni~\cite{AndersonGuionnetZeitouni} yields
  \begin{equation}
    \label{eq:ineq1}
    \left|1-\fdet{\op{I}-\widetilde{\op{K}}|_{L_2\left(a,+\infty\right)}}\right|\leq\left(\sum\limits_{n=1}^{\infty}
      \frac{n^{\frac{n+1}{2}}\left(\|\nu\|_1\right)^n\left(\left\|\widetilde{\op{K}}|_{L_2\left(a,+\infty\right)}\right\|_{\infty}\right)^{n-1}}{n!}\right)\left\|\widetilde{\op{K}}|_{L_2\left(a,+\infty\right)}\right\|_{\infty},
  \end{equation}
  where~$\left\|\widetilde{\op{K}}|_{L_2\left(a,+\infty\right)}\right\|_{\infty}=\underset{x,y>a}{\sup} \left| \widetilde{K}(x,y)\right|$ and~$\|\nu\|_1=\int\limits_a^{+\infty}|\nu(dx)|=2e^{-\frac{a}{2}}$.
  
  Take~$a = 0$ in Proposition~\ref{PropositionTraceClass}. The bound follows,
  \begin{equation}
    \label{eq:ineq2}
    \left\|\widetilde{\op{K}}|_{L_2\left(a,+\infty\right)} \right\|_{\infty} \le C e^{-a/2}, \quad a>0,
  \end{equation}
  where~$C>0$ is an absolute constant. It is readily verified that
  \begin{equation}
    \label{eq:ineq3}
    \sum\limits_{n=1}^{\infty} \frac{n^{\frac{n+1}{2}}\left(\|\nu\|_1\right)^n\left(\left\|\widetilde{\op{K}}|_{L_2\left(a,+\infty\right)}\right\|_{\infty}\right)^{n-1}}{n!}\le \widetilde{C},\quad a>0,
  \end{equation}
  where~$\widetilde{C}>0$ is an absolute constant. Then, the inequalities~\eqref{eq:ineq1}, \eqref{eq:ineq2}, and~\eqref{eq:ineq3} prove~\eqref{In72}. 

  Due to Remark~\ref{Q_a_analytic} and the fact that~\eqref{KernelQa} is analytic in~$a$, the determinant~$\fdet{\op{I}-\op{Q}_a}$ is an entire function of~$a \in \mathbb{C}$. The uniqueness theorem for analytic functions, the fact that~$\fdet{\op{I}-\op{Q}_a} = \mathcal{P}(a)$, $a \in \mathbb{R}$, and the observation that~\eqref{In72} implies~$\mathcal{P}(a) > 0$ for large enough~$a>0$ show that~$\mathcal{P}(a)$ has at most a finite number of zeros in~$[a_0,+\infty)$ for arbitrary~$a_0 \in \mathbb{R}$. It is also clear that, being a distribution function (see the discussion in Section~\ref{SectionDiscussionDistributionFunction}), $\mathcal{P}(a)$ is non-negative and weakly increasing, so its zero with the largest coordinate can only be at~$a=a_0$. Since~$a_0$ is arbitrary, we conclude that~$\mathcal{P}(a) = \fdet{\op{I}-\op{Q}_a} >0$ for all~$a \in \mathbb{R}$. In particular, the determinant is non-vanishing and thus $(\op{I}-\op{Q}_a)$ has an inverse.
\end{proof}

Now, we can establish the following statement.
\begin{prop}
  \label{prop_72}
  The Riemann--Hilbert problem, ~\ref{problem_y-rh}, has a unique solution~$Y(z)$. Let~$Y_1(a)$ be defined by~\eqref{Y1}. Then, $Y_1(a)$ is smooth and
  \begin{equation}
    \label{PP}
    \frac{\mathcal{P}'(a)}{\mathcal{P}(a)}=\left(Y_1(a)\right)_{1,1}.
  \end{equation}
\end{prop}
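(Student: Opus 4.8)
The plan is to recognize $\op{Q}_a$ from Theorem~\ref{TheoremGapProbabilityIntegrableOperator} as an \emph{integrable} operator and to run the standard Its--Izergin--Korepin--Slavnov machinery. Concretely, I would set
\[
  f_1(z)={1}_{\tilde{\gamma}}(z)\,e^{\frac{\alpha z^2}{4}-az},\quad
  f_2(z)={1}_{\gamma}(z)\,e^{-\frac{\alpha z^2}{4}},\quad
  g_1(z)=\tfrac{1}{2\pi i}\,{1}_{\gamma}(z)\,\Gamma(z)\,e^{-\frac{\alpha z^2}{4}+az},\quad
  g_2(z)=-\tfrac{1}{2\pi i}\,{1}_{\tilde{\gamma}}(z)\,(\Gamma(z))^{-1}e^{\frac{\alpha z^2}{4}},
\]
and check directly that $Q_a(x,y)=\dfrac{f_1(x)g_1(y)+f_2(x)g_2(y)}{x-y}$ on $\gamma\cup\tilde{\gamma}$, that $\sum_j f_j(z)g_j(z)=0$ (the two products live on disjoint contours), and that the IIKS jump matrix $I-2\pi i\,\mathbf f(z)\mathbf g(z)^{T}$, with $\mathbf f=(f_1,f_2)^{T}$ and $\mathbf g=(g_1,g_2)^{T}$, coincides with $J_Y(z)$ in~\eqref{eq_jump_Y}. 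Since $\mathbf f,\mathbf g$ are analytic in a neighbourhood of $\gamma\cup\tilde{\gamma}$ and decay super-exponentially along it, the integrable-operator theory (e.g.\ Baik, Deift, and Suidan~\cite[Theorem~5.21]{BaikDeiftSuidan}, or Deift~\cite{Deift}) reduces solvability of~\ref{problem_y-rh} to invertibility of $(\op{I}-\op{Q}_a)$ on $L_2(\gamma\cup\tilde{\gamma})$, which is exactly what Lemma~\ref{Lemma71} supplies for every $a>0$; together with the Liouville-type uniqueness noted before~\eqref{Y1}, this gives the first assertion. The same theory then provides the representation
\[
  Y(z)=\op{I}-\int\limits_{\gamma\cup\tilde{\gamma}}\frac{\mathbf F(w)\,\mathbf g(w)^{T}}{w-z}\,dw,
  \qquad \mathbf F:=(\op{I}-\op{Q}_a)^{-1}\mathbf f\ \ \text{(componentwise)},
\]
so that, expanding $1/(w-z)$ as $z\to\infty$ and matching with~\eqref{Y1}, I would read off $\left(Y_1(a)\right)_{1,1}=\int_{\gamma\cup\tilde{\gamma}}F_1(w)\,g_1(w)\,dw$ with $F_1=(\op{I}-\op{Q}_a)^{-1}f_1$.

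The second, and main, step is to identify this integral with $\mathcal{P}'(a)/\mathcal{P}(a)$. Here the crucial observation is that $\op{Q}_a$ depends on $a$ only through $f_1$ and $g_1$, with $\partial_a f_1(z)=-z\,f_1(z)$ and $\partial_a g_1(z)=z\,g_1(z)$; consequently the factor $(x-y)$ in the numerator of $Q_a$ cancels under $\partial_a$ and
\[
  \partial_a Q_a(x,y)=-\,f_1(x)\,g_1(y),
\]
that is, $\partial_a\op{Q}_a=-\,f_1\otimes g_1$ is rank one. Since $\mathcal{P}(a)=\fdet{\op{I}-\op{Q}_a}$ by Theorem~\ref{TheoremGapProbabilityIntegrableOperator}, $(\op{I}-\op{Q}_a)^{-1}$ exists (Lemma~\ref{Lemma71}), and $\partial_a\op{Q}_a$ is trace class, the logarithmic-derivative formula for Fredholm determinants gives
\[
  \frac{\mathcal{P}'(a)}{\mathcal{P}(a)}
  =-\Tr{\bigl((\op{I}-\op{Q}_a)^{-1}\partial_a\op{Q}_a\bigr)}
  =\Tr{\bigl((\op{I}-\op{Q}_a)^{-1}(f_1\otimes g_1)\bigr)}
  =\int\limits_{\gamma\cup\tilde{\gamma}}\bigl[(\op{I}-\op{Q}_a)^{-1}f_1\bigr](w)\,g_1(w)\,dw,
\]
which is precisely $\int_{\gamma\cup\tilde{\gamma}}F_1(w)g_1(w)\,dw=\left(Y_1(a)\right)_{1,1}$, establishing~\eqref{PP}.

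I expect the main obstacle to be the careful bookkeeping in the IIKS correspondence on the unbounded, disconnected contour $\gamma\cup\tilde{\gamma}$: one must confirm that the singular integral equation underlying~\ref{problem_y-rh} is equivalent to the invertibility of $(\op{I}-\op{Q}_a)$, and that the super-exponential decay of $\mathbf f,\mathbf g$ justifies the manipulations with the Cauchy operator and with the large-$z$ expansion~\eqref{Y1}. Once that is in place, the rank-one identity for $\partial_a\op{Q}_a$ and the ensuing trace computation are essentially mechanical. A minor side point is legitimizing differentiation in $a$ inside the Fredholm determinant, which follows from analyticity of $Q_a(x,y)$ in $a$ together with the decay bounds already obtained in Section~\ref{Section5} and in the proof of Lemma~\ref{Lemma71}.
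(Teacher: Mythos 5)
Your proposal is correct and follows essentially the same route as the paper: identify $Q_a$ as an IIKS-integrable kernel with $f$ and $g$ supported on disjoint contours, invoke the standard correspondence to equate solvability of Problem Y-RH with invertibility of $(\op{I}-\op{Q}_a)$ (supplied by Lemma~\ref{Lemma71}), read off $(Y_1(a))_{1,1}$ from the Cauchy-integral representation, and finally use that $\partial_a \op{Q}_a$ is the rank-one operator $-f_1\otimes g_1$ to evaluate the logarithmic derivative of the Fredholm determinant as a trace. The only difference from the paper's proof is the placement of the scalar factor $\frac{1}{2\pi i}$ (you attach it to $g$ while the paper attaches it to $f$), which is an inconsequential change of normalization.
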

\begin{proof}
  Set
  \begin{equation}
    \label{f73}
    f(z)=
    \begin{pmatrix}
      f_1(z)\\
      f_2(z) 
    \end{pmatrix}=\frac{1}{2\pi i}
    \begin{pmatrix}
      {1}_{\tilde{\gamma}}(z)e^{\frac{\alpha z^2}{4}-az} \\
      {1}_{\gamma}(z)e^{-\frac{\alpha z^2}{4}} 
    \end{pmatrix},
  \end{equation}
  and
  \begin{equation}\label{h73a}
    h(z)=
    \begin{pmatrix}
      h_1(z)\\
      h_2(z) 
    \end{pmatrix}=
    \begin{pmatrix}
      {1}_{\gamma}(z)\Gamma(z) e^{-\frac{\alpha z^2}{4}+az}\\
      -{1}_{\tilde{\gamma}}(z) \left(\Gamma(z)\right)^{-1} e^{\frac{\alpha z^2}{4}} 
    \end{pmatrix}.
  \end{equation}
  Observe that in these terms, the kernel~$Q_a(x,y)$ in~\eqref{KernelQa} reads
  \begin{equation}
    Q_a(x,y)=\frac{f_1(x)h_1(y)+f_2(x)h_2(y)}{x-y}.
  \end{equation}
  Also, clearly
  \begin{equation}
    f_1(x)h_1(x)+f_2(x)h_2(x)=0,
  \end{equation}
  so~$Q_a(x,y)$ is continuous.

  Since~$(\op{I}-\op{Q}_a)$ has an inverse by Lemma~\ref{Lemma71}, we can introduce a vector function~$F(z)$ whose components~$F_j(z)$ are defined by
  \begin{equation}
    F_j=\left(\op{I}-\op{Q}_a\right)^{-1}[f_j],\quad j=1,2.
  \end{equation}
  It is well-known (e.g., see Its~\cite[Section 9.4.1]{Its}) that~$F(z)$ can be also represented in terms of the solution of the following~$2\times 2$ Riemann--Hilbert problem
  \begin{itemize}
  \item $Y(z)$ is analytic in~$\mathbb{C} \setminus (\gamma \cup \tilde{\gamma})$;
  \item $Y^+(z) = Y^-(z) J_Y(z),\; z \in \gamma \cup \tilde{\gamma}$, $J_Y(z)=I-2\pi i f(z) h^{T}(z)$;
  \item $Y(z)\to  I$, as~$z\to \infty$, $z\in\C\setminus(\gamma\cup\tilde{\gamma})$;
  \end{itemize}  
  by the formula
  \begin{equation}
    F(z)=Y^+(z)f(z).
  \end{equation}

  Writing out~$J_{Y}(z)$, we see that the Riemann--Hilbert problem stated above is identical to~\ref{problem_y-rh}. This problem has a unique solution if and only if~$(\op{I}-\op{Q}_a)$ is invertible (e.g., see Baik, Deift, and Suidan ~\cite[Theorem 5.2.1]{BaikDeiftSuidan}); the latter has already been established.

  It is commonly known that the solution~$Y(z)$ of~\eqref{problem_y-rh} is given via the Cauchy-type integral (e.g., see Its~\cite[Section 9.4.1]{Its} or Deift~\cite{Deift}),
  \begin{equation}
    Y(z)=I-\int\limits_{\gamma\cup\tilde{\gamma}} \frac{F(s)h^{T}(s)}{s-z}\, ds.
  \end{equation}
  From which it is easy to extract the coefficient~$Y_1(a)$ in the expansion~\eqref{Y1},
  \begin{equation}
    \label{eq74}
    Y_1(a)=\int\limits_{\gamma\cup\tilde{\gamma}}F(s)h^{T}(s)ds.
  \end{equation}
  
  Also, we can use the invertibility of~$\left(\op{I}-\op{Q}_a\right)$ and the smoothness of~$Q_a(x,y)$ with respect to~$a$ to conclude that~$Y_1(a)$ is smooth and to obtain
  \begin{equation}
    \label{eq75}
    \frac{\mathcal{P}'(a)}{\mathcal{P}(a)}=-\Tr\left(\left(\op{I}-\op{Q}_a\right)^{-1}\frac{d}{da}\op{Q}_a\right).
  \end{equation}
  It is immediate to see from~\eqref{KernelQa} that
  \begin{equation}
    \frac{d}{da}Q_a(x,y)=-f_1(x)h_1(y),
  \end{equation}
  which gives
  \begin{equation}
    \label{eq76}
    \Tr\left(\left(\op{I}-\op{Q}_a\right)^{-1}\frac{d}{da}\op{Q}_a\right)=-\int\limits_{\gamma\cup\tilde{\gamma}} F_1(s)h_1(s)ds.
  \end{equation}
  The formulas~\eqref{eq74}--\eqref{eq76} yield~\eqref{PP}, and the proof is completed.
\end{proof}

For the purpose of the subsequent asymptotic analysis, it is convenient to relate~$\left(Y_1(a)\right)_{1,1}$ to the off-diagonal elements
of~$Y_1(a)$.
\begin{prop}
  \label{prop_73}
  Let~$Y_1(a)$ be defined via~\eqref{Y1} in terms of the solution~$Y(z)$ of~\ref{problem_y-rh}. Then,
  \begin{equation}
    \label{71part2}
    \frac{d}{da}\left(Y_1(a)\right)_{1,1}=\left(Y_1(a)\right)_{1,2}\left(Y_1(a)\right)_{2,1}.
  \end{equation}
\end{prop}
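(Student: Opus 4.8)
The plan is to derive a differential equation in~$a$ for the solution~$Y(z)$ of Problem Y-RH by a gauge transformation that removes the $a$-dependence of the jump, and then to read off the $(1,1)$-entry of the resulting identity for~$\partial_a Y_1(a)$.

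First I would record that~$Y(z)$ depends analytically on~$a$. This follows from Lemma~\ref{Lemma71}: since~$\op{I}-\op{Q}_a$ is invertible and~$\op{Q}_a$ is analytic in~$a$, the resolvent~$(\op{I}-\op{Q}_a)^{-1}$, and hence through the Cauchy-type formula for~$Y(z)$ used in the proof of Proposition~\ref{prop_72}, the function~$Y(z)$, is analytic in~$a$; moreover this holds with enough uniformity in~$z$ near and away from~$\gamma\cup\tilde\gamma$ that the asymptotic expansion~\eqref{Y1} may be differentiated term by term, so that~$\partial_a Y(z)=z^{-1}\partial_a Y_1(a)+z^{-2}\partial_a Y_2(a)+O(z^{-3})$ as~$z\to\infty$. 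I would also note that~$\det Y\equiv 1$, because~$J_Y$ is unimodular on each of~$\gamma$ and~$\tilde\gamma$, so~$\det Y$ is entire and tends to~$1$; in particular~$Y(z)^{-1}$ is analytic in~$\mathbb{C}\setminus(\gamma\cup\tilde\gamma)$ and continuous up to the boundary.

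The key step is the gauge transformation. Put~$P_+=\begin{pmatrix}1&0\\0&0\end{pmatrix}$, let~$E(z)=e^{-azP_+}=\begin{pmatrix}e^{-az}&0\\0&1\end{pmatrix}$, and set~$\Psi(z)=Y(z)E(z)$. Since~$E$ is entire and invertible in~$z$, the function~$\Psi$ shares the analyticity of~$Y$ off~$\gamma\cup\tilde\gamma$ and is continuous up to the boundary, and a direct computation of~$E^{-1}J_YE$ on each of~$\gamma$ and~$\tilde\gamma$ shows that the jump matrix of~$\Psi$, namely~$\tilde J(z)=E(z)^{-1}J_Y(z)E(z)$, does not depend on~$a$ (the factors~$e^{\pm az}$ in~\eqref{eq_jump_Y} are absorbed, leaving~$\begin{pmatrix}1&0\\-\Gamma(z)e^{-\alpha z^2/2}&1\end{pmatrix}$ on~$\gamma$ and~$\begin{pmatrix}1&\Gamma(z)^{-1}e^{\alpha z^2/2}\\0&1\end{pmatrix}$ on~$\tilde\gamma$). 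Consequently~$M(z):=\bigl(\partial_a\Psi(z)\bigr)\Psi(z)^{-1}$ has no jump across~$\gamma\cup\tilde\gamma$: from~$\Psi^+=\Psi^-\tilde J$ and~$\partial_a\tilde J=0$ one gets~$M^+=M^-$ on the contour, and since~$J_Y$ is analytic in a neighborhood of~$\gamma\cup\tilde\gamma$, the function~$M$ extends analytically across it and is therefore entire. Using~$\partial_a E\cdot E^{-1}=-zP_+$ one computes~$M(z)=\partial_aY\cdot Y^{-1}-zYP_+Y^{-1}$; expanding this with~\eqref{Y1} and the termwise-differentiated expansion of~$\partial_a Y$ gives~$M(z)=-zP_+-[Y_1(a),P_+]+O(z^{-1})$ as~$z\to\infty$, so by Liouville's theorem~$M(z)=-zP_+-[Y_1(a),P_+]$ identically.

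Substituting back, the identity~$\partial_aY\cdot Y^{-1}=zYP_+Y^{-1}-zP_+-[Y_1(a),P_+]$ holds for all~$z$; matching the~$z^{-1}$-coefficients of both sides, using~$Y^{-1}=I-Y_1 z^{-1}+(Y_1^2-Y_2)z^{-2}+O(z^{-3})$, yields~$\partial_aY_1(a)=[Y_2(a),P_+]-[Y_1(a),P_+]Y_1(a)$. Finally I would take the~$(1,1)$-entry: one has~$[Y_2,P_+]_{11}=0$, and, since~$[Y_1,P_+]=\begin{pmatrix}0&-(Y_1)_{12}\\(Y_1)_{21}&0\end{pmatrix}$, also~$\bigl([Y_1,P_+]Y_1\bigr)_{11}=[Y_1,P_+]_{12}(Y_1)_{21}=-(Y_1)_{12}(Y_1)_{21}$, so~$\partial_a(Y_1(a))_{11}=(Y_1(a))_{12}(Y_1(a))_{21}$, which is~\eqref{71part2}. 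The only step that is not pure bookkeeping is the first one, namely justifying the analytic dependence of~$Y$ on~$a$ together with the legitimacy of differentiating the asymptotic expansion~\eqref{Y1} term by term; this requires the analyticity in~$a$ of the resolvent $(\op{I}-\op{Q}_a)^{-1}$ (available from Lemma~\ref{Lemma71}) and uniform control of the Cauchy integral representing~$Y$, after which everything else follows from manipulations with the expansion at infinity.
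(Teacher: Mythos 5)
Your argument is correct and is essentially the same as the paper's: the authors set $\Psi(z;a)=Y(z;a)e^{-\frac{az}{2}\sigma_3}$ and likewise show that $V=(\partial_a\Psi)\Psi^{-1}$ is an entire matrix-valued polynomial of degree one, then read off the $(1,1)$-entry of the $z^{-1}$-coefficient; your choice $E(z)=e^{-azP_+}$ differs from $e^{-\frac{az}{2}\sigma_3}$ only by the scalar factor $e^{-az/2}$ (since $P_+=\tfrac12(I+\sigma_3)$), which adds a multiple of $zI$ to $M$ and hence has no effect on the resulting commutator identity. Your additional care in justifying the analytic dependence of $Y(z;a)$ on $a$ and the term-by-term differentiation of the expansion \eqref{Y1} is a welcome supplement to the paper, which takes these points for granted.
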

\begin{proof} 
  Let~$Y(z;a)$ be the solution of~\ref{problem_y-rh}. For the sake of transparency, we are  explicitly writing the parameter~$a$ in the list of arguments of~$Y(z)$. Set
  \begin{equation}
    \Psi(z;a)=Y(z;a)e^{-\frac{az}{2}\sigma_3},\quad \sigma_3=
    \begin{pmatrix}
      1 & 0 \\
      0 & -1
    \end{pmatrix}.
  \end{equation}
  The jump matrix of~$\Psi(z,a)$,
  \begin{equation}
    J_{\Psi}(z)=
    \begin{pmatrix}
      1 & {1}_{\tilde{\gamma}}(z)\left(\Gamma(z)\right)^{-1}e^{\frac{\alpha z^2}{2}} \\
      -{1}_{\gamma}(z)\Gamma(z)e^{-\frac{\alpha z^2}{2}} & 1
    \end{pmatrix},
  \end{equation}
  does not depend on~$a$. Therefore,
  \begin{equation}
    \label{VDPSI}
    V(z;a)=\left(\frac{\partial}{\partial a}\Psi(z;a)\right)\left(\Psi(z;a)\right)^{-1}.
  \end{equation}
  has  no jump in the complex~$z$-plane and thus is entire. The formula~\eqref{Y1} yields the following asymptotic expansion for~$\Psi(z;a)$,
  \begin{equation}
    \Psi(z;a)=\left(I+\frac{Y_1(a)}{z} + \frac{Y_2(a)}{z^2}+O\left(\frac{1}{z^3}\right)\right)e^{-\frac{az}{2}\sigma_3}
  \end{equation}
  as~$z\to \infty$. This gives
  \begin{equation}
    \label{PSI1}
    \begin{aligned}
      &\left(\frac{\partial}{\partial a}\Psi(z;a)\right)\left(\Psi(z;a)\right)^{-1}\\
      &=\left(\frac{Y_1'(a)}{z}+\frac{Y_2'(a)}{z^2}+O\left(\frac{1}{z^3}\right)\right)
      \left(I+\frac{Y_1(a)}{z}+\frac{Y_2(a)}{z^2}+O\left(\frac{1}{z^3}\right)\right)^{-1}\\
      &+\left(I+\frac{Y_1(a)}{z}+\frac{Y_2(a)}{z^2}+O\left(\frac{1}{z^3}\right)\right)\left(-\frac{z}{2}\sigma_3\right)
      \left(I+\frac{Y_1(a)}{z}+\frac{Y_2(a)}{z^2}+O\left(\frac{1}{z^3}\right)\right)^{-1}
    \end{aligned}
  \end{equation}
  as~$z\to\infty$.

  Define the matrices~$A(a)$, $B(a)$, and~$C(a)$ to be the coefficients in the expansion~\eqref{PSI1}, i.e.,
  \begin{equation}\label{PSI2}
    \left(\frac{\partial}{\partial a}\Psi(z;a)\right)\left(\Psi(z;a)\right)^{-1}=A(a) z + B(a) +\frac{C(a)}{z}+O\left(\frac{1}{z^2}\right)
  \end{equation}
  as~$z\to\infty$. Comparing~\eqref{PSI1} and~\eqref{PSI2}, we find
  \begin{equation}
    A(a)=-\frac{1}{2}\sigma_3,\quad B(a)=\frac{1}{2}\left[\sigma_3, Y_1(a)\right],
  \end{equation}
  and
  \begin{equation}
    C(a)=Y_1'(a)+\frac{1}{2}
    \left[\sigma_3,Y_2(a)\right]-\frac{1}{2}\left[\sigma_3,Y_1(a)\right] Y_1(a).
  \end{equation}
  Recall that~$V(z;a)$ is entire in the complex $z$-plane. Liouville's theorem yields
  \begin{equation}
    V(z;a) = A(a) z + B(a),
  \end{equation}
  so~$C=0$, and we find out that
  \begin{equation}
    \label{eq725}
    Y_1'(a)+\frac{1}{2} \left[\sigma_3,Y_2(a)\right]-\frac{1}{2}\left[\sigma_3,Y_1(a)\right] Y_1(a)=0.
  \end{equation}
  Extracting the~$(1,1)$-elements of the left and right hand sides of~\eqref{eq725}, we arrive at~\eqref{71part2}.
\end{proof}


\section{Asymptotic analysis}
\label{Section8}
By Proposition~\ref{prop_72} and Proposition~\ref{prop_73}, the logarithmic derivative of the gap probability~$\mathcal{P}(a)$ is determined by the product of the off-diagonal elements of~$Y_1(a)$. Recall that the matrix~$Y_1(a)$ is a coefficient before~$z^{-1}$ in the large z expansion of the unique solution~$Y(z)$ of~\ref{problem_y-rh}.

In order to find the asymptotics of the product~$\left(Y_1(a)\right)_{1,2} \left(Y_1(a)\right)_{2,1}$ as~$a\to +\infty$, we need to transform our original problem, \ref{problem_y-rh}. Since we study the asymptotics as~$a \to +\infty$, we assume that~$a>0$ in this section.  

To begin with, we switch the direction on~$\tilde{\gamma}$ and deform this contour in such a way that it passes through~$a/\alpha$. Then, we deform~$\gamma$ so that it intersects the $x$-axis at~$1/(\alpha a)$. As a result, we obtain a new Riemann--Hilbert problem, which can be stated in a similar way as~\ref{problem_y-rh}; the only difference is that its jump matrix,
\begin{equation}
  \begin{aligned}
    \widetilde{J}_Y(z)= 
    \begin{pmatrix}
      1 &  -1_{\tilde{\gamma}}(z)(\Gamma{(z)})^{-1}e^{\frac{\alpha z^2}{2} - a z}\\
      - 1_{\gamma}(z) \Gamma{(z)} e^{-\frac{\alpha z^2}{2} + a z}  & 1
    \end{pmatrix},
  \end{aligned}
\end{equation}
has an opposite sign in the~$(1,2)$ entry. It is not hard to check that these problems are equivalent.

Next, consider the transformation
\begin{equation}
  \label{Transformation82}
  \zeta(z)=i\left(\frac{\alpha z}{a}-1\right)
\end{equation}
and its inverse
\begin{equation}
  \label{Transformation83}
  z(\zeta)=\frac{a}{\alpha}\left(1-i\zeta\right).
\end{equation}
The images~$\lambda$ and~$\tilde{\lambda}$ of~$\gamma$ and~$\tilde{\gamma}$ under~\eqref{Transformation82} are shown in Fig.~\ref{norm_cont}.
\begin{figure}[ht!] 
  \centering
  \begin{tikzpicture}[scale=2.5]
    \begin{scope}[compass style/.style={color=black}, color=black, decoration={markings,mark= at position 0.5 with {\arrow{stealth}}}]
      \draw[->, thick] (-1,0) -- (1,0);
      \draw[->] (0,-1.4) -- (0,0.5);

      \draw[thick, postaction=decorate, rotate=90, xshift=-0.5cm, yscale=1.5, xscale=0.75] (-1.05,-0.25) -- (0,-0.25);
      \draw[thick, postaction=decorate, rotate=90, xshift=-0.5cm, yscale=1.5, xscale=0.75] (0,0.25) -- (-1.05,0.25);
      \draw[thick, postaction=decorate, rotate=90, xshift=-0.5cm, yscale=1.5] (0,-0.25)  arc[start angle=-90, end angle=90,radius=0.25];

      \node at (-0.05,-0.1) {0};
      \node at (0.58,+0.1) {$\tilde{\lambda}$};
      \node at (-0.5,-0.3) {$\lambda$};

      \node[circle,fill, inner sep=1.0pt] () at (0,0){};
      
      \draw (0,-0.5) node [cross] {};
      \node at (0.07,-0.15) {$\theta$};
      \node at (0.2,-0.52) {$-i$};
      \node at (0.2,-1.02) {$-2i$};
      \node at (-0.7,0.4) {$\zeta$-plane};
      
      \draw (0,-1) node [cross] {};
    \end{scope}
  \end{tikzpicture}  
  \caption{The contours~$\lambda$ and~$\tilde{\lambda}$ in the $\zeta$-plane. The contour~$\lambda$ intersects the y-axis at~$\theta = -i(1-1/a^2)$.}
  \label{norm_cont}
\end{figure}
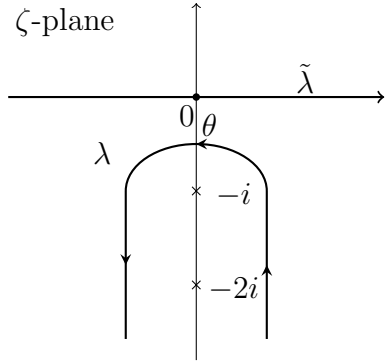

We normalize our Riemann--Hilbert problem by defining a new matrix function~$U(\zeta)$,
\begin{equation}
  \label{eq:small_norm_u_transform}
  U(\zeta) = e^{\frac{a^2}{8 \alpha} \sigma_3} Y\left(\frac{a}{\alpha}(1-i \zeta)\right)e^{-\frac{a^2}{8 \alpha} \sigma_3},\quad \zeta \in \mathbb{C} \setminus(\lambda \cup \tilde{\lambda}).
\end{equation}
This function satisfies the following Riemann--Hilbert problem.
\begin{problem}[U-RH]
  \needspace{5ex}
  \namedlabel{problem_u-rh}{Problem U-RH}
  \leavevmode
  \begin{enumerate}[label=\textnormal{({\roman*})},ref=U-RH-{\roman*}]
  \item  \label{urh_cond1} $U(\zeta)$ is analytic in~$\mathbb{C} \setminus (\lambda \cup \tilde{\lambda})$;
  \item  \label{urh_cond2} $U^+(\zeta) = U^-(\zeta) J_U(\zeta),\quad \zeta \in \lambda \cup \tilde{\lambda}$,
    \begin{equation}
      \label{eq_jump_U}
      J_U(\zeta) =
      \begin{pmatrix}
        1 &  -{1}_{\tilde{\lambda}}(\zeta)\left(\Gamma{\left(\frac{a}{\alpha}(1- i \zeta)\right)}\right)^{-1} e^{-\frac{a^2}{2 \alpha}(\zeta^2+\frac{1}{2})}\\
        - {1}_{\lambda}(\zeta) \Gamma{\left(\frac{a}{\alpha}(1- i \zeta)\right)} e^{\frac{a^2}{2 \alpha}(\zeta^2+\frac{1}{2})}  & 1
      \end{pmatrix},
    \end{equation}
  \item  \label{urh_cond3} $U(\zeta) \to I$ as~$\zeta \to \infty$, $\zeta \in \mathbb{C} \setminus (\lambda \cup \tilde{\lambda})$.
  \end{enumerate}
\end{problem}

Equations~\eqref{eq:small_norm_u_transform}, \eqref{Transformation82}, and~\eqref{Y1} show that
\begin{equation}
  \label{Relation87}
  U_1(a)=\frac{i\alpha}{a}e^{\frac{a^2}{8\alpha}\sigma_3}Y_1(a) e^{-\frac{a^2}{8\alpha}\sigma_3},
\end{equation}
where~$U_1(a)$ is the coefficient before~$\zeta^{-1}$ in the large~$\zeta$ expansion of~$U(\zeta)$
\begin{equation}\label{U86}
  U(\zeta)=I+\frac{U_1(a)}{\zeta}+O\left(\frac{1}{\zeta^2}\right), \quad \zeta \to \infty.
\end{equation}

In particular, the formula~\eqref{Relation87} implies
\begin{equation}
  \label{Rl88}
  \left(Y_1(a)\right)_{1,2}\left(Y_1(a)\right)_{2,1}=-\frac{a^2}{\alpha^2}
  \left(U_1(a)\right)_{1,2}\left(U_1(a)\right)_{2,1}.
\end{equation}
Therefore, the asymptotic analysis of~$Y_1(a)$ as~$a \to +\infty$ is reduced to that of~$U_1(a)$.

\begin{prop}
  \label{th_RH_U_assympt}
  Let~$U(\zeta)$ be a solution of~\ref{problem_u-rh}, and let~$U_1(a)$ be defined by~\eqref{U86}. Then the following asymptotic identities hold
  \begin{equation}
    \label{eq:U_asymp_RH}
    \begin{aligned}
      (U_1(a))_{1,1} &= O\left(e^{-\frac{a^2}{2 \alpha}}\right), \quad (U_1(a))_{2,2} = O\left(e^{-\frac{a^2}{2 \alpha}}\right),\\
      (U_1(a))_{2,1} &= \frac{1}{2 \pi i} \int \limits_{\lambda} \Gamma{\left(\frac{a}{\alpha}(1- i s)\right)} e^{\frac{a^2}{2 \alpha}\left(s^2+\frac{1}{2}\right)}\, ds + O\left(e^{-\frac{a^2}{2 \alpha}}\right), \\
      (U_1(a))_{1,2} &= \frac{1}{2 \pi i} \int \limits_{-\infty}^{+\infty} \left(
        \Gamma{\left(\frac{a}{\alpha}(1- i s)\right)} \right)^{-1} e^{-\frac{a^2}{2 \alpha}\left(s^2+\frac{1}{2}\right)}\, ds + O\left(e^{-\frac{a^2}{2 \alpha}}\right),
    \end{aligned}
  \end{equation}
  as~$a \to +\infty$.
\end{prop}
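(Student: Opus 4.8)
The plan is to analyze \ref{problem_u-rh} by the small-norm Riemann--Hilbert method, exploiting the fact (already noted in Section~\ref{Section4}) that after the normalization \eqref{eq:small_norm_u_transform} the jump matrix $J_U$ is, for large $a$, a small perturbation of the identity, so that no $g$-function and no local or global parametrices are needed. Write $J_U=I+\Delta_U$, where $\Delta_U$ is supported on $\lambda\cup\tilde\lambda$ and is purely off-diagonal; by the deformations carried out just before the statement we may and do take $\tilde\lambda=\mathbb R$ (the image of the line $\re z=a/\alpha$, oriented left to right), while $\lambda$ is the image of $\gamma$, a curve encircling the poles $s_n=-i(1+n\alpha/a)$, $n\ge 0$, of $\Gamma\!\left(\tfrac a\alpha(1-i\,\cdot\,)\right)$ and crossing the imaginary axis at $-i(1-1/a^2)$. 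The standard theory (e.g., Deift~\cite[Chapter~7]{Deift_book}) gives, once $\|C^-_{\Delta_U}\|_{L^2\to L^2}<1$, that \ref{problem_u-rh} is uniquely solvable and
\begin{equation}
  U(\zeta)=I+\frac{1}{2\pi i}\int_{\lambda\cup\tilde\lambda}\frac{(\rho\,\Delta_U)(s)}{s-\zeta}\,ds,\qquad \rho=I+(I-C^-_{\Delta_U})^{-1}C^-(\Delta_U),
\end{equation}
where $C^-_{\Delta_U}f:=C^-(f\Delta_U)$ and $\|\rho-I\|_{L^2(\lambda\cup\tilde\lambda)}=O\big(\|\Delta_U\|_{L^2(\lambda\cup\tilde\lambda)}\big)$. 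Matching this with \eqref{U86} yields $U_1(a)=-\frac{1}{2\pi i}\int_{\lambda\cup\tilde\lambda}(\rho\,\Delta_U)(s)\,ds$, which I split as $-\frac{1}{2\pi i}\int\Delta_U-\frac{1}{2\pi i}\int(\rho-I)\Delta_U$.

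The first term already produces the claimed leading expressions: since $\Delta_U$ has zero diagonal, $(2,1)$-entry $-1_\lambda(s)\,\Gamma\!\left(\tfrac a\alpha(1-is)\right)e^{\frac{a^2}{2\alpha}(s^2+\frac12)}$, and $(1,2)$-entry $-1_{\tilde\lambda}(s)\,\Gamma\!\left(\tfrac a\alpha(1-is)\right)^{-1}e^{-\frac{a^2}{2\alpha}(s^2+\frac12)}$, the quantity $-\frac{1}{2\pi i}\int\Delta_U$ contributes exactly $\frac{1}{2\pi i}\int_\lambda\Gamma(\cdots)e^{\frac{a^2}{2\alpha}(\cdots)}ds$ in the $(2,1)$ slot, $\frac{1}{2\pi i}\int_{-\infty}^{+\infty}\Gamma(\cdots)^{-1}e^{-\frac{a^2}{2\alpha}(\cdots)}ds$ in the $(1,2)$ slot, and $0$ on the diagonal. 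Hence it remains to show the correction $-\frac{1}{2\pi i}\int(\rho-I)\Delta_U$ is $O(e^{-a^2/(2\alpha)})$, and for the diagonal entries that the whole of $U_1(a)$ is $O(e^{-a^2/(2\alpha)})$. Both follow once one proves $\|\Delta_U\|_{L^\infty(\lambda\cup\tilde\lambda)}\to 0$ and $\|\Delta_U\|_{L^2(\lambda\cup\tilde\lambda)}=O(e^{-a^2/(4\alpha)})$ as $a\to+\infty$: then $\|C^-_{\Delta_U}\|=O(\|\Delta_U\|_{L^\infty})<1$ for large $a$, $\|\rho-I\|_{L^2}=O(e^{-a^2/(4\alpha)})$, so $\big|\int(\rho-I)\Delta_U\big|\le\|\rho-I\|_{L^2}\|\Delta_U\|_{L^2}=O(e^{-a^2/(2\alpha)})$; and the diagonal of $(\rho-I)\Delta_U$, being a product of off-diagonal factors, obeys the same bound, giving the first line of \eqref{eq:U_asymp_RH}.

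The estimates on $\Delta_U$ are the heart of the matter. On $\tilde\lambda=\mathbb R$ the exponential is $e^{-\frac{a^2}{2\alpha}(s^2+\frac12)}=e^{-a^2/(4\alpha)}e^{-\frac{a^2}{2\alpha}s^2}$, a narrow Gaussian, and Stirling's formula~\eqref{StirlingFormula} shows $\Gamma\!\left(\tfrac a\alpha(1-is)\right)^{-1}$ grows at most exponentially in $|s|$ (dominated by the Gaussian) and equals $O(\Gamma(a/\alpha)^{-1})$ near $s=0$; hence the $L^\infty$- and $L^2$-norms of the $(1,2)$-entry on $\mathbb R$ are $O\big(\Gamma(a/\alpha)^{-1}e^{-a^2/(4\alpha)}\big)=o\big(e^{-a^2/(4\alpha)}\big)$. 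On $\lambda$ one must route the contour at distance $\gtrsim 1/a^2$ from every pole $s_n$: this is consistent with the pinned crossing at $-i(1-1/a^2)$ (distance $1/a^2$ from $s_0=-i$), after which $\lambda$ may be taken midway between consecutive $s_n$, which are spaced $\alpha/a$ apart. Using $\Res_{s=s_n}\Gamma\!\left(\tfrac a\alpha(1-is)\right)=\tfrac{i\alpha}{a}\tfrac{(-1)^n}{n!}$ and Stirling elsewhere, $\big|\Gamma\!\left(\tfrac a\alpha(1-is)\right)\big|=O(a)$ on the arc of $\lambda$ of length $O(1/a^2)$ near $s_0$ and is much smaller along the rest of $\lambda$, while $\big|e^{\frac{a^2}{2\alpha}(s^2+\frac12)}\big|\le e^{-a^2/(4\alpha)}$ on that arc and decays super-exponentially away from $s_0$. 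Therefore $\|\Delta_U\|_{L^\infty(\lambda)}=O\big(a\,e^{-a^2/(4\alpha)}\big)=o(1)$ and $\|\Delta_U\|_{L^2(\lambda)}^2=O\big(\tfrac1{a^2}\cdot a^2 e^{-a^2/(2\alpha)}\big)=O\big(e^{-a^2/(2\alpha)}\big)$, i.e. $\|\Delta_U\|_{L^2(\lambda)}=O(e^{-a^2/(4\alpha)})$, completing the argument. (If one prefers not to rederive solvability from the small-norm estimate, existence and uniqueness of $U$ for all $a>0$ also follow from the equivalence of \ref{problem_u-rh} with \ref{problem_y-rh} and Proposition~\ref{prop_72}.)

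The main obstacle is exactly this last step: obtaining uniform (in $a$) control of $\Gamma\!\left(\tfrac a\alpha(1-is)\right)$ along $\lambda$ near its poles $s_n$, which crowd together at rate $\alpha/a$ as $a\to\infty$, and verifying that the contour — pinned to pass through $-i(1-1/a^2)$ and to enclose all the $s_n$ — can be chosen close enough to the poles for the integral defining $(U_1(a))_{2,1}$ to be small, yet far enough that $|\Gamma|$ stays $O(a)$ there, so that $\|\Delta_U\|_{L^2}^2$ is genuinely $O(e^{-a^2/(2\alpha)})$. Once $\Delta_U$ is controlled the remainder is the routine small-norm machinery, and \eqref{eq:U_asymp_RH} follows.
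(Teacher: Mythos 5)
Your proposal is correct and follows essentially the same route as the paper: the identical small-norm decomposition $U_1(a)=-\tfrac{1}{2\pi i}\int\Delta-\tfrac{1}{2\pi i}\int m_0\Delta$ (your $\rho-I$ is the paper's $m_0$), with the off-diagonal leading term producing the two explicit integrals, the Cauchy--Schwarz bound $\|m_0\|_{L_2}\|\Delta\|_{L_2}=O(e^{-a^2/(2\alpha)})$ controlling the correction, and the same $L_2$/$L_\infty$ estimates on $\Delta$ (the paper obtains the bound on $\lambda$ by changing variables back to the $z$-plane rather than via the residue of $\Gamma$ at $s_0$, but the content is the same). Two minor points: the uniform-in-$a$ boundedness of the Cauchy projection $C^-$ on the varying contour $\lambda\cup\tilde\lambda$, which you use implicitly, is justified in the paper via the Lipschitz criterion of Coifman--McIntosh--Meyer; and your contour cannot literally run ``midway between consecutive $s_n$'' since it must enclose all of them --- it is a hairpin with legs at fixed horizontal distance from the poles --- though the estimates you actually invoke (pinch at distance $1/a^2$ from $s_0$, rapid decay elsewhere) are the correct ones.
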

\begin{proof}
  Recall the following inequality for the gamma function (see~\cite[Formula (5.6.6), (5.6.7)]{DLMF}),
  \begin{equation}
    \label{eq:est_gamma_inv}
    |(\Gamma{(x + i y)})^{-1}| \le   \frac{\sqrt{\cosh{(\pi y)}}}{{\Gamma}{(x)}}, \quad x \ge 1/2. 
  \end{equation}

  For~$\zeta \in \tilde{\lambda}$, we have
  \begin{equation}
    \left|\left(\Gamma{\left(\frac{a}{\alpha}(1- i\zeta)\right)}\right)^{-1} e^{-\frac{a^2}{2 \alpha}\left(\zeta^2+\frac{1}{2}\right)}\right| \le \frac{\sqrt{
        \cosh{\left(\frac{\pi a \zeta}{\alpha}\right)}}}{\Gamma{\left(\frac{a}{\alpha}\right)}} e^{-\frac{a^2}{2 \alpha}\left(\zeta^2+\frac{1}{2}\right)}, \quad \zeta \in \tilde{\lambda}.
  \end{equation}
  This yields
  \begin{equation}
    \label{I812}
    \left\|\left(\Gamma{\left(\frac{a}{\alpha}(1- i\zeta)\right)}\right)^{-1} e^{-\frac{a^2}{2 \alpha}\left(\zeta^2+\frac{1}{2}\right)}\right\|_{L_p(\tilde{\lambda})} \le C 
    \left\|\sqrt{\cosh{(\pi \zeta)}} e^{-\frac{{\alpha \zeta^2}}{2}}\right\|_{L_p(\tilde{\lambda})} \frac{e^{-\frac{a^2}{4 \alpha}}}{\Gamma{\left(\frac{a}{\alpha}\right)}}, \quad p \in [1,\infty],
  \end{equation}
  for some constant~$C>0$ independent of~$a$.

  For~$\zeta \in \lambda$, we have
  \begin{equation}
    \label{I813}
    \left\|\Gamma{\left(\frac{a}{\alpha}(1- i\zeta)\right)} e^{\frac{a^2}{2 \alpha}\left(\zeta^2+\frac{1}{2}\right)}\right\|_{L_p(\lambda)} \le C \left\|\Gamma(z) e^{-\frac{\alpha z^2}{2}}\right\|_{L_p(\gamma)} e^{-\frac{a^2}{4 \alpha}}, \quad p \in [1,\infty],
  \end{equation}
  for~$C>0$ which does not depend on~$a$. The later inequality is easy to prove by writing out the norm on the left-hand side and changing the variables according to~\eqref{Transformation82}. We remind that the contour~$\gamma$ is specified in Fig.~\ref{contours} and it intersects the $x$-axis at~$1/(\alpha a)$.

  Set
  \begin{equation}
    \Delta = J_U - I,
  \end{equation}
  then the inequalities~\eqref{I812} and~\eqref{I813} imply
  \begin{equation}
    \label{eq:proof_U_int0}
    \|\Delta (\zeta)\|_{L_p(\lambda \cup \tilde{\lambda})} \le C e^{-\frac{a^2}{4 \alpha}}, \quad p \in [1, \infty].
  \end{equation}
  By the standard theory of small-norm Riemann--Hilbert problems for varying contours this means that~\ref{problem_u-rh} has a unique solution for large enough~$a$ and that~$U^-(\zeta)$ satisfies
  \begin{equation}
    \label{eq:proof_U_mu}
    \|U^-(\zeta)-I\|_{L_2(\lambda \cup \tilde{\lambda})} \le C e^{-\frac{a^2}{4 \alpha}}.
  \end{equation}

  For the reader's convenience, we recall essential points of this somewhat standard argument. In that we mostly follow Its~\cite[Section 9.3]{Its}, Fokas, Its, Kapaev, and Novokshenov~\cite[Chapter 3, \S 1]{FokasItsKapaevNovokshenov}, and Deift~\cite{Deift}.

  Let~$\op{C}_{\pm}$ denote the Cauchy operators corresponding to the contour~$\lambda\cup\tilde{\lambda}$, i.e.,
  \begin{equation}
    \left(\op{C}_{\pm}[f]\right)(\zeta)=\frac{1}{2\pi i} \lim \limits_{\xi\to\zeta^{\pm}}
    \int\limits_{\lambda\cup\tilde{\lambda}}\frac{f(\tau)}{\tau-\xi}\, d\tau,\quad \zeta \in \lambda\cup\tilde{\lambda},
  \end{equation}
  where the limits are taken from the~$(\pm)$-side of~$\lambda\cup\tilde{\lambda}$. The solution of the Riemann--Hilbert problem  admits the following integral representation
  \begin{equation}
    \label{815}
    U(\zeta)=I+\frac{1}{2\pi i}\int\limits_{\lambda\cup\tilde{\lambda}}\frac{m(\tau) \Delta(\tau)}{\tau-\zeta}\, d\tau,\quad \zeta\notin\lambda\cup\tilde{\lambda},
  \end{equation}
  where~$m = U^{-}$ solves the integral equation
  \begin{equation}
    \label{816}
    m=\op{I}+\op{C}_{-}\left[m\Delta\right],\quad \zeta\in\lambda\cup\tilde{\lambda}.
  \end{equation}

  Now, set
  \begin{equation}
    m_0 = m-I.
  \end{equation}
  The equation~\eqref{816} takes the form
  \begin{equation}
    \label{819}
    m_0=\op{C}_{-}\left[m_0\Delta\right]+\op{C}_{-}[\Delta],\quad \zeta\in\lambda\cup\tilde{\lambda}.
  \end{equation}
  Next, we would like to pass to the norms in this equation. Using an important property of the boundedness of~$\op{C}_-$ (e.g., see Fokas, Its, Kapaev, and Novokshenov~\cite[Chapter 3, \S1]{FokasItsKapaevNovokshenov}) and the estimate~\eqref{eq:proof_U_int0}, we arrive at
  \begin{equation}
    \label{eq:norm1_est}
    \|C_-[\Delta]\|_{L_2\left(\lambda\cup\tilde{\lambda}\right)} \le \|C_-\|_{\mathrm{op}} \|\Delta\|_{L_2\left(\lambda\cup\tilde{\lambda}\right)}\le C e^{-\frac{a^2}{4\alpha}},
  \end{equation}
  where~$\|\cdot\|_{\mathrm{op}}$ is the corresponding operator norm. Further, set
  \begin{equation}
    \op{C}^{\Delta}_-[f]\overset{\mathrm{def}}{=}C_-\left[f\Delta\right], \quad f \in {L_2\left(\lambda\cup\tilde{\lambda}\right)}.
  \end{equation}
  Using the same reasoning as above, we obtain
  \begin{equation}
    \label{eq:norm2_est}
    \|\op{C}^{\Delta}_-\|_{L_2\left(\lambda\cup\tilde{\lambda}\right)} \le \|\op{C}_-\|_{\mathrm{op}}\|\Delta\|_{L_{\infty} \left(\lambda\cup\tilde{\lambda}\right)}\leq Ce^{-\frac{a^2}{4\alpha}}.
  \end{equation}
  
  Note that the contour~$\lambda$ changes with~$a$; therefore, so do~$\|\op{C}_-\|_{\mathrm{op}}$ and~$C$ in~\eqref{eq:norm1_est} and~\eqref{eq:norm2_est}. Nonetheless, one can choose~$C>0$ so that it is independent of~$a>0$. This is justified because of the following general fact (see Bleher and Kuijlaars~\cite[Appendix A]{BleherKuijlaars} and Coifman, McIntosh, and Meyer~\cite{CoifmanMcIntoshMeyer}).

  Assume that~$\left\{\Gamma_{\sigma}\right\}_{\sigma\in\Lambda}$ is a family of contours in the complex plane each of which is of the parametric form
  \begin{equation}
    \Gamma_{\sigma}=\left\{z \in \mathbb{C}|\, \re{z}=p,\; \im{z}=\varphi_{\sigma}(p),\; -\infty<p<+\infty\right\},
  \end{equation}
  where~$\varphi_{\sigma}(p)$ is uniformly Lipschitz, i.e.,
  \begin{equation}
    \label{eq: Lip_cond}
    |\varphi_{\sigma}(x)-\varphi_{\sigma}(y)|\leq M|x-y|
  \end{equation}
  for some~$M>0$ independent of~$\sigma$. Then, the family of Cauchy operators~$\op{C}_{\pm,\Gamma_{\sigma}}$ associated with the contours~$\Gamma_{\sigma}$ is uniformly bounded, i.e., there
  exists a constant~$C>0$ which only depends on~$M$ and such that for all~$\sigma \in \Lambda$
  \begin{equation}
    \|\op{C}_{\pm,\Gamma_{\sigma}}\|_{\mathrm{op}}\leq C.
  \end{equation}
  
  In our concrete situation, it is clear that in a vicinity of~$\theta = -i\left(1-\frac{1}{a^2}\right)$, the contour~$\lambda$ (see Fig.~\ref{norm_cont}) can be chosen so that the Lipschitz condition~\eqref{eq: Lip_cond} holds uniformly in~$a>0$; the vertical parts of~$\lambda$ can be chosen to be independent of~$a$ at all. This means the premises are satisfied, and thus the operator norm~$\|C_-\|_{\mathrm{op}}$ is bounded uniformly in~$a>0$ as claimed above.

  Now, we get back to~\eqref{819} and~\eqref{eq:norm2_est}. It follows that for large enough~$a>0$ the operator~$\op{C}^{\Delta}_-$ is a contraction. Therefore, the integral equation~\eqref{819} has a unique solution which satisfies the bound (e.g., see Reed and Simon~\cite[Theorem V.21]{ReedSimon})
  \begin{equation}\label{Inequalitym0}
    \|m_0\|_{L_2\left(\lambda\cup\tilde{\lambda}\right)}\leq 2\|C_-[\Delta]\|_{L_2\left(\lambda\cup\tilde{\lambda}\right)}\leq Ce^{-\frac{a^2}{4\alpha}}.
  \end{equation}

  The final part of the proof is as follows. The formulas~\eqref{U86}, \eqref{815}, and~\eqref{816} give us
  \begin{equation}
    \label{824}
    \begin{aligned}
      &U_1(a)=-\frac{1}{2\pi i}\int\limits_{\lambda\cup\tilde{\lambda}}m(\tau)\Delta(\tau)\, d\tau\\
      &=-\frac{1}{2\pi i}\int\limits_{\lambda\cup\tilde{\lambda}}\Delta(\tau)\, d\tau-\frac{1}{2\pi i}\int\limits_{\lambda\cup\tilde{\lambda}}m_0(\tau)\Delta(\tau)\, d\tau.
    \end{aligned}
  \end{equation}
  The second integral on the right-hand side of this expression can be estimated by the Cauchy--Bunyakovsky--Schwarz inequality using~\eqref{eq:proof_U_int0} and~\eqref{Inequalitym0}. Namely, we have
  \begin{equation}
    \label{825}
    \left|\,\int\limits_{\lambda\cup\tilde{\lambda}}m_0(\tau)\Delta(\tau)\, d\tau\right|
    \leq\|m_0\|_{L_2\left(\lambda\cup\tilde{\lambda}\right)}\|\Delta\|_{L_2\left(\lambda\cup\tilde{\lambda}\right)}\leq Ce^{-\frac{a^2}{2\alpha}}.
  \end{equation}
  This inequality together with~\eqref{824} and with~\eqref{eq_jump_U} yield the asymptotic formulas~\eqref{eq:U_asymp_RH}. The proof is concluded.
\end{proof}
\begin{prop}
  \label{Proposition82}
  For the integrals in~\eqref{eq:U_asymp_RH} the following  asymptotic formulas hold,
  \begin{equation}
    \label{As826}
    \frac{1}{2 \pi i} \int \limits_{\lambda} \Gamma{\left(\frac{a}{\alpha}(1- i \zeta)\right)} e^{\frac{a^2 \zeta^2}{2 \alpha}}\, d\zeta=\frac{i\alpha}{a}e^{-\frac{a^2}{2\alpha}}\left(1+O\left(e^{-a}\right)\right),
  \end{equation}
  \begin{equation}
    \label{As827}
    \frac{1}{2 \pi i} \int \limits_{-\infty}^{+\infty} \left(\Gamma{\left(\frac{a}{\alpha}(1- i x)\right)} \right)^{-1} e^{-\frac{a^2 x^2}{2 \alpha}}\, dx =\frac{e^{-\frac{1}{2\alpha} \left(\log\frac{a}{\alpha}\right)^2}}{i\sqrt{2\pi\alpha}\Gamma\left(1+\frac{a}{\alpha}\right)} \left(1+O\left(\frac{(\log a)^2}{a}\right)\right),
  \end{equation}
  as~$a\to+\infty$.
\end{prop}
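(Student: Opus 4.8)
The plan is to treat the two integrals in \eqref{As826} and \eqref{As827} separately: the first is a residue sum, and the second a Laplace-type estimate in which the $\Gamma$-factor must be handled carefully because it is not slowly varying on the scale of the Gaussian peak.

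\emph{The integral \eqref{As826}.} The map $\zeta\mapsto\Gamma\!\left(\tfrac a\alpha(1-i\zeta)\right)$ is meromorphic with simple poles exactly at $\zeta_k=-i(1+k\alpha/a)$, $k=0,1,2,\dots$, all lying on the negative imaginary axis below the point $\theta$ at which $\lambda$ crosses it; since $\lambda$ is the image under \eqref{Transformation82} of $\gamma$, which encircles all non-positive integers, it winds around every $\zeta_k$. First I would record that the integrand $\Gamma\!\left(\tfrac a\alpha(1-i\zeta)\right)e^{\frac{a^2}{2\alpha}(\zeta^2+1/2)}$ decays as $\zeta\to-i\infty$ along $\lambda$, and more generally in a cone about the negative imaginary axis with small discs around the $\zeta_k$ removed: there $\re\zeta^2\to-\infty$, so the exponential decays doubly-exponentially, while the reflection formula $\Gamma(w)\Gamma(1-w)=\pi/\sin\pi w$ bounds $\Gamma\!\left(\tfrac a\alpha(1-i\zeta)\right)$ by a factorially decaying quantity away from its poles. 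This legitimizes collapsing $\lambda$ onto the ray of poles, so that $\frac1{2\pi i}\int_\lambda=\sum_{k\ge0}\Res_{\zeta=\zeta_k}$. A direct computation gives $\Res_{\zeta=\zeta_k}=\frac{(-1)^k}{k!}\frac{i\alpha}{a}e^{\frac{a^2}{2\alpha}(\zeta_k^2+1/2)}$, and since $\zeta_k^2+\tfrac12=-\tfrac12-\tfrac{2k\alpha}{a}-\tfrac{k^2\alpha^2}{a^2}$ the $k$-th term equals $\frac{(-1)^k}{k!}\frac{i\alpha}{a}e^{-a^2/(4\alpha)}e^{-ka-k^2\alpha/2}$; the $k=0$ term is exactly $\frac{i\alpha}{a}e^{-a^2/(4\alpha)}$ and the tail is $O\!\left(\tfrac\alpha a e^{-a^2/(4\alpha)}e^{-a}\right)$, which is \eqref{As826}. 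The orientation of $\lambda$ in Fig.~\ref{norm_cont} fixes the overall sign.

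\emph{The integral \eqref{As827}.} Here $1/\Gamma$ is entire, so this is a genuine Laplace integral whose Gaussian $e^{-\frac{a^2}{2\alpha}x^2}$ concentrates near $x=0$ on the scale $x\sim\sqrt\alpha/a$. I would first localize to $|x|\le a^{-2/3}$: on the complement the Gaussian dominates the bound $\bigl|(\Gamma(\tfrac a\alpha(1-ix)))^{-1}\bigr|\le\sqrt{\cosh(\pi a x/\alpha)}/\Gamma(a/\alpha)$ from \eqref{eq:est_gamma_inv}, so that contribution is super-exponentially small relative to the main term. On $|x|\le a^{-2/3}$ I would apply Stirling's formula \eqref{StirlingFormula} to $\Gamma(\tfrac a\alpha(1-ix))$; the crucial point is that the linear term in the expansion of $\log\Gamma$ carries the factor $\psi(a/\alpha)=\log(a/\alpha)+O(\alpha/a)$, hence $\frac1{\Gamma(\frac a\alpha(1-ix))}=\frac1{\Gamma(a/\alpha)}\exp\!\bigl(\tfrac{ia}{\alpha}\log\tfrac a\alpha\,x+O(\tfrac a\alpha x^2)+O(\tfrac1a)\bigr)$, and this linear term is \emph{not} negligible over the width of the Gaussian. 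Combining the two exponents and completing the square puts the saddle at $x_\ast=\tfrac ia\log\tfrac a\alpha$; its value contributes the factor $e^{-\frac1{2\alpha}(\log(a/\alpha))^2}$, which together with the $e^{-a^2/(4\alpha)}$ coming from the $-\tfrac{a^2}{2\alpha}\cdot\tfrac12$ term produces $e^{-\frac1{4\alpha}(a^2+2(\log(a/\alpha))^2)}$. Shifting the line of integration through $x_\ast$ (valid since the integrand is entire and Gaussian-decaying) and evaluating the remaining Gaussian gives $\sqrt{2\pi\alpha}/a$; assembling the prefactor $\frac1{2\pi i}\cdot\frac{\sqrt{2\pi\alpha}}{a}\cdot\frac1{\Gamma(a/\alpha)}$ and using $\Gamma(1+\tfrac a\alpha)=\tfrac a\alpha\Gamma(\tfrac a\alpha)$ recovers the prefactor $\frac1{i\sqrt{2\pi\alpha}\,\Gamma(1+a/\alpha)}$ in \eqref{As827}, the error being governed by the quadratic Stirling correction evaluated at $x_\ast$, which scales like $(\log(a/\alpha))^2/a$.

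I expect the main obstacle to be the error bookkeeping in \eqref{As827}: because $\psi(a/\alpha)$ grows, the amplitude $1/\Gamma$ is not slowly varying relative to the Gaussian, so one must produce a uniform Stirling expansion on the shrinking window $|x|\le a^{-2/3}$ with explicit control of the remainder along $w=\tfrac a\alpha(1-ix)$, justify the deformation to the off-axis saddle, and verify that the quadratic correction evaluated there feeds back into the error as exactly $O((\log a)^2/a)$ rather than a worse power of $\log a$. By comparison \eqref{As826} is routine once the decay needed to collapse $\lambda$ onto its poles has been recorded.
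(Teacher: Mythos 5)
For \eqref{As826} your residue argument in the $\zeta$-plane is, up to the change of variables \eqref{Transformation82}, the same as the paper's, which undoes the transformation and applies the residue theorem on $\gamma$; the residue sums agree term by term and the tail is $O(e^{-a})$ in both, so this part is fine.

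For \eqref{As827} your plan — localize to $|x|\le a^{-2/3}$, control the tail with the $\cosh$ bound \eqref{eq:est_gamma_inv}, apply Stirling uniformly on the window, complete the square, and shift the line through the off-axis saddle $x_\ast=\tfrac{i}{a}\log\tfrac{a}{\alpha}$ — is a correct Laplace-type argument and lands on the right leading term and error order. The paper reaches the same result by a somewhat different arrangement of the same ingredients: it rewrites $A(a)$ \emph{exactly} as the claimed prefactor times one plus a correction involving the auxiliary function $Q(s;t)=\Gamma(t)e^{-is\log t}/\Gamma(t-is)-1$, shifts the contour of the \emph{exact} entire integrand by $i\alpha^{-1}\log(a/\alpha)$, and only afterwards splits into $|s|\le\log a$ (where Stirling gives $Q=O((\log a)^2/a)$ uniformly) and $|s|>\log a$ (where \eqref{eq:est_gamma_inv} yields a super-exponential tail). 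The advantage of the paper's ordering is that the contour deformation is done once, on an exact entire function, so there is no need to reconcile a Stirling-approximated integrand on a truncated window with the exact one outside it; you flag exactly this bookkeeping as the main obstacle, and the $Q$-rewriting is the device that dissolves it. Both routes produce $O((\log a)^2/a)$, the key point in either case being that the saddle sits at imaginary distance $\sim\log a/a$ from the real axis, so the quadratic correction from Stirling evaluated there is of size $(\log a)^2/a$.
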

\begin{proof}
  Let us first obtain~\eqref{As826}. Recall that the contour~$\lambda$ is the image of the contour~$\gamma$ under the transformation~\eqref{Transformation82}. Therefore, we can write
  \begin{equation}
    \label{As828}
    \frac{1}{2 \pi i} \int \limits_{\lambda} \Gamma{\left(\frac{a}{\alpha}(1- i \zeta)\right)} e^{\frac{a^2 \zeta^2}{2 \alpha}}\, d\zeta=\frac{\alpha}{2\pi a}e^{-\frac{a^2}{2\alpha}}\int\limits_{\gamma} \Gamma(z)e^{-\frac{\alpha z^2}{2}+az}\, dz.
  \end{equation}
  The integral along~$\gamma$ on the right-hand side of~\eqref{As828} can be evaluated by the residue theorem,
  \begin{equation}
    \label{As829}
    \frac{1}{2\pi i} \int\limits_{\gamma} \Gamma(z)e^{-\frac{\alpha z^2}{2}+az}\,dz=\sum\limits_{k=0}^{\infty} \frac{(-1)^k}{k!}e^{-\frac{\alpha k^2}{2}-ak},
  \end{equation}
  where we used~$\underset{z=-k}\Res \Gamma(z)=\frac{(-1)^k}{k!}$. The sum above is equal to~$1+O\left(e^{-a}\right)$ as~$a\to+\infty$. Thus, the formula~\eqref{As826} is a direct consequence of~\eqref{As828} and~\eqref{As829}.

  To obtain the asymptotic formula~\eqref{As827}, we denote by~$A(a)$ the integral in its left-hand side, which after massaging takes the form
  \begin{equation}
    \label{As830}
    A(a)=\frac{1}{2\pi i\Gamma\left(\frac{a}{\alpha}\right)}
    \int\limits_{-\infty}^{+\infty} \frac{\Gamma\left(\frac{a}{\alpha}\right)}{\Gamma\left(\frac{a}{\alpha}\left(1-ix\right)\right)} e^{-\frac{a^2 x^2}{2\alpha}}dx.
  \end{equation}
  In order to guess the asymptotics of~$A(a)$ as~$a\to +\infty$, we can use Stirling's formula~\eqref{StirlingFormula} once again. Applying this formula to the ratio of gamma functions in~\eqref{As830} and omitting the low-order terms, we arrive at
  \begin{equation}
    \label{As831}
    A(a)\sim\frac{1}{2\pi i\Gamma\left(\frac{a}{\alpha}\right)}
    \int\limits_{-\infty}^{+\infty} \myexp{-\frac{a^2x^2}{2\alpha}+\frac{i a x}{\alpha}\log\frac{a}{\alpha}}\, dx
  \end{equation}
  as~$a\to+\infty$. The integral above can be computed explicitly, and we obtain
  \begin{equation}
    \label{As832}
    A(a)\sim\frac{e^{-\frac{1}{2\alpha}\left(\log\frac{a}{\alpha}\right)^2}}{
      i\sqrt{2\pi\alpha}\Gamma\left(1+\frac{a}{\alpha}\right)}
  \end{equation}
  as~$a\to+\infty$.

  To establish~\eqref{As827} rigorously, observe that~$A(a)$ can be written as
  \begin{equation}
    A(a)=\frac{e^{-\frac{1}{2\alpha}\left(\log\frac{a}{\alpha}\right)^2}}{
      i\sqrt{2\pi\alpha}\Gamma\left(1+\frac{a}{\alpha}\right)}\left(1+ \sqrt{\frac{\alpha}{2 \pi}}\int \limits_{-\infty}^{+\infty} Q\left(s;\frac{a}{\alpha}\right) \myexp{-\frac{\alpha}{2}\left(s - \frac{i}{\alpha} \log\frac{a}{\alpha}\right)^2}\, ds\right),
  \end{equation}
  where
  \begin{equation}
    \label{eq:def_Q}
    Q(s;t) = \frac{\Gamma{(t)}e^{- i s \log{t}}}{\Gamma{\left(t - i s\right)}} - 1.
  \end{equation}
  We deform the contour~$\mathbb{R} \mapsto \mathbb{R}+ i{\alpha}^{-1} \log{(a/\alpha)}$ and make the change of variables~$s \mapsto s + i{\alpha}^{-1}\log{(a/\alpha)}$, which brings the new contour back into~$\mathbb{R}$. This gives
  \begin{equation}
    A(a)=\frac{e^{-\frac{1}{2\alpha}\left(\log\frac{a}{\alpha}\right)^2}}{
      i\sqrt{2\pi\alpha}\Gamma\left(1+\frac{a}{\alpha}\right)}\left(1+ \sqrt{\frac{\alpha}{2 \pi}}\int \limits_{-\infty}^{+\infty}Q\left(s+i\phi(a);\frac{a}{\alpha}\right)e^{-\frac{\alpha s^2}{2}} ds\right),
  \end{equation}
  where~$\phi(a)=\alpha^{-1}\log{(a/\alpha)}$. The inequality~\eqref{eq:est_gamma_inv} implies the estimate 
  \begin{equation}
    \left|Q\left(s+ {i}\phi(a);\frac{a}{\alpha}\right)\right| \le  1+ 
    \frac{\Gamma\left(\frac{a}{\alpha}\right) \sqrt{\cosh{\pi s}}}{\Gamma\left(\frac{a}{\alpha} + \frac{1}{\alpha}\log\frac{a}{\alpha}\right)} \le 1 + e^{\frac{\pi |s|}{2}} \le 2e^{ \frac{\pi |s|}{2}}.  
  \end{equation}
  Using this estimate we obtain
  \begin{equation}
    \label{eq:est_inf_s1}
    \left|\, \int \limits_{|s|> \log{a}} Q\left(s + {i}\phi(a);\frac{a}{\alpha}\right) e^{-\frac{\alpha s^2}{2}}\, ds\right| \le 2 \int \limits_{|s|> \log{a}} e^{\frac{\pi |s|}{2}} e^{-\frac{\alpha s^2}{2}}\, ds \le C e^{-\varkappa (\log{a})^2},
  \end{equation}
  for some~$C>0$ and~$\varkappa>0$. For~$|s| \le \log{a}$, we note that the arguments of the integral satisfy~$s+i \phi(a)=O(\log{a})$ and~$a/\alpha = O(a)$ as~$a \to +\infty$, both~$O$-terms are uniform in~$s$ for the specified interval. Having this is mind, we calculate asymptotics of~$Q(\tilde{s},{\tilde{a}})$ for~$\tilde{s} = O(\log{a})$ and~$\tilde{a} = O(a)$.
  
  Since both arguments of the gamma function in~\eqref{eq:def_Q} approach infinity,  we can use~\eqref{StirlingFormula} to find out that
  \begin{equation}
    \begin{aligned}
      Q\left(\tilde{s};\tilde{a}\right) &= \myexp{-\left(\tilde{a}-i \tilde{s}-\frac{1}{2}\right) 
        \log\left(1 - \frac{i\tilde{s}}{\tilde{a}}\right)-i \tilde{s}}\left(1+ O\left(\frac{1}{\tilde{a}}\right)\right) - 1,
    \end{aligned}
  \end{equation}
  uniformly in~$s$ for~$|s| \le \log{a}$ as~$a \to +\infty$. Expanding the logarithm in a series up to a quadratic term, we obtain~$Q(\tilde{s},\tilde{a}) = O\left(\frac{(\log{\tilde{a}})^2}{\tilde{a}}\right)$ as~$a \to +\infty$. Plugging in~$\tilde{s}=s + i \phi(a)$ and~$\tilde{a}=a/\alpha$, we arrive at
  \begin{equation}
    Q(s + i \phi(a), a) = O\left(\frac{(\log{a})^2}{a}\right),
  \end{equation}
  uniformly in~$s$ for~$|s| \le \log{a}$, as~$a \to +\infty$. Therefore,
  \begin{equation}
    \label{eq:est_inf_s2}
    \left|\, \int \limits_{|s| \le \log{a}} Q\left(s + {i}\phi(a);\frac{a}{\alpha}\right) e^{-\frac{\alpha s^2}{2}}\, ds\right|\le C \frac{(\log{a})^2}{a},
  \end{equation}
  for some~$C>0$. This concludes the proof.
\end{proof}


\section{Proof of Theorem~\ref{TheoremMainResult} and of Corollary~\ref{cor_main_theorem}}
\label{Section9}
\begin{proof}[Proof of Theorem~\ref{TheoremMainResult}]
By Proposition~\ref{prop_72}, the gap probability~$\mathcal{P}(a)$ satisfies the differential equation~\eqref{PP}. Taking into account~\eqref{71part2}, we obtain
\begin{equation}\label{PT91}
  \frac{d^2}{da^2}\log\mathcal{P}(a)=\left(Y_1(a)\right)_{1,2}\left(Y_1(a)\right)_{2,1}.
\end{equation}

Integrating both the right-hand side and the left-hand side of~\eqref{PT91} yields
\begin{equation}\label{PT92}
  \frac{d}{da}\log\mathcal{P}(a)=-\int\limits_a^{\infty}\left(Y_1(x)\right)_{1,2}\left(Y_1(x)\right)_{2,1}dx.
\end{equation}
We used the fact that the logarithmic derivative of~$\mathcal{P}(a)$ is equal to 
$\left(Y_1(a)\right)_{1,1}$, which vanishes as~$a\to+\infty$, according to~\eqref{Relation87} and~\eqref{eq:U_asymp_RH}. By Lemma~\ref{Lemma71} and by the formula~\eqref{PFredholmFirst}, the logarithm of~$\mathcal{P}(a)$ also vanishes as~$a\to+\infty$. Therefore, equation~\eqref{PT92} implies
\begin{equation}
  \label{PT93}
  \log\mathcal{P}(a)=\int\limits_a^{+\infty}
  \left(\int\limits_x^{\infty}\left(Y_1(s)\right)_{1,2}\left(Y_1(s)\right)_{2,1}ds\right) dx.
\end{equation}

By Fubini's theorem, it is immediate to see that
\begin{equation}
  \label{PT95}
  \log\mathcal{P}(a)=\int\limits_a^{+\infty}(x-a)
  \left(Y_1(x)\right)_{1,2}\left(Y_1(x)\right)_{2,1} dx.
\end{equation}
This gives formulas~\eqref{MainExactFormula} and~\eqref{URH} in the statement of the theorem. The fact that the function~$u(x)$ defined by~\eqref{URH} has the asymptotics~\eqref{Asymptotics38} follows from~\eqref{Rl88} and from Propositions~\ref{th_RH_U_assympt}--\ref{Proposition82}.
\end{proof}

\begin{proof}[Proof of Corollary~\ref{cor_main_theorem}]
  First, notice that
  \begin{equation}
    \begin{aligned}
      &\left(\int\limits_a^{+\infty}(x-a) u_0(x) \, dx  \right)^2 \le \int\limits_a^{+\infty}(x-a) u_0(x) \frac{(\log{x})^2}{x}\, dx \int\limits_a^{+\infty}(y-a) u_0(y) \frac{y}{(\log{y})^2} \, dy\\
      &\le \int\limits_a^{+\infty}(x-a) u_0(x) \frac{(\log{x})^2}{x}\, dx \int\limits_a^{+\infty}  \frac{u_0(y) y^2}{(\log{y})^2} \, dy 
      \le C \int\limits_a^{+\infty}(x-a) u_0(x) \frac{(\log{x})^2}{x}\, dx
    \end{aligned}
  \end{equation}
  for some~$C>0$, where we used the Cauchy--Bunyakovsky--Schwarz inequality and the straight forward estimates.
  
  Expanding the exponential in~\eqref{MainExactFormula} in a series up to the second order terms and plugging in the asymptotics~\eqref{Asymptotics38} then yield~\eqref{eq:P_asymp}. The formula~\eqref{eq:P_asymp_crude} follows from~\eqref{eq:P_asymp} if one notices that
  \begin{equation}
    \begin{aligned}
      &(u_0(a))^{-1}\int\limits_a^{+\infty}(x-a) u_0(x) \, dx = (u_0(a))^{-1}\int\limits_0^{+\infty} x\, u_0(x+a) \; dx \\
      &=\int\limits_0^{+\infty} \frac{\alpha \Gamma{(1+\frac{x}{\alpha})}}{\Gamma{(\frac{x+a}{\alpha})}} {\myexp{-\frac{1}{2 \alpha}\left((x+a)^2 - a^2+ \left(\log{\frac{x+a}{\alpha}}\right)^2 - \left(\log{\frac{a}{\alpha}}\right)^2\right)}} \, dx \\
      &\le C \int\limits_0^{+\infty}  \Gamma{\left(1+\frac{x}{\alpha}\right)} e^{-\frac{x^2}{2 \alpha}}\, dx < +\infty.
    \end{aligned}
  \end{equation}
\end{proof}



\begin{thebibliography}{99}
\bibitem{Ahn}{Ahn, A. Global universality of Macdonald plane partitions. Ann. Inst. Henri Poincar\'{e} Probab. Stat.~56 (2020), no.~3, 1641--1705.} 
\bibitem{AkemannBurda}{Akemann, G.; Burda, Z. Universal microscopic correlation functions for products of independent Ginibre matrices. J. Phys. A 45 (2012), no.~46, 465201.}
\bibitem{AkemannBurdaKieburg1}{Akemann, G.; Burda, Z.; Kieburg, M. Universal distribution of Lyapunov exponents for products of Ginibre matrices. J. Phys. A 47 (2014), no.~39, 395202.}
\bibitem{AkemannBurdaKieburg2}{Akemann, G.; Burda, Z.; Kieburg, M. Universality of local spectral statistics of products of random matrices. Phys. Rev. E 102 (2020), no.~5, 052134.}
\bibitem{AkemannKieburgWei}{Akemann, G.; Kieburg M.; Wei, L. Singular value correlation functions for products of Wishart random matrices. J. Phys. A.~46 (2013) 275205.}
\bibitem{AkemannIpsenKieburg}{Akemann, G.; Ipsen, J.; Kieburg M. Products of rectangular random matrices: singular values and progressive scattering.  Phys. Rev. E.~88 (2013) 052118.}
\bibitem{AndersonGuionnetZeitouni}{Anderson, G. W.; Guionnet, A.; Zeitouni, O. An introduction to random matrices. Cambridge studies in advanced mathematics, 118, Cambridge university press, 2010.}
\bibitem{BaikDeiftSuidan}{Baik, J.; Deift, P.; Suidan, T. Combinatorics and random matrix theory. Graduate Studies in Mathematics, 172. American Mathematical Society, Providence, RI, 2016.}
\bibitem{BertolaCafasso}{Bertola, M.; Cafasso M. The transition between the gap probabilities from the Pearcey to the Airy Process: a Riemann--Hilbert approach. Int. Math. Res. Not. 2012, no.~7 (2012): 1519-1568.}
\bibitem{BleherKuijlaars}{Bleher, P. M.; Kuijlaars, A. B. Large~$n$ limit of Gaussian random matrices with external source, Part III: Double scaling limit, Comm. Math. Phys.~270 (2007), 481--517.}
\bibitem{BorodinDeift}{Borodin, A.; Deift, P. Fredholm determinants, Jimbo--Miwa--Ueno $\tau$-functions, and representation theory. Comm. Pure Appl. Math.~55 (2002), no.~9, 1160--1230.}
\bibitem{BorodinGorinStrahov}{Borodin, A.; Gorin, V.; Strahov, E. Product Matrix Processes as Limits of Random Plane Partitions, Int. Math. Res. Not., Vol.~2020 (2020), Issue~20, 6713--6768.}
\bibitem{CharlierLenellsMauersbergerI}{Charlier, C.; Lenells, J.; Mauersberger, J.  Higher Order Large Gap Asymptotics at the Hard Edge for Muttalib--Borodin Ensembles. Comm. Math. Phys.~384 (2021), 829--907.}
\bibitem{CharlierLenellsMauersbergerII}{Charlier, C.; Lenells, J.; Mauersberger, J. The multiplicative constant for the Meijer-G kernel determinant.  Nonlinearity, Vol.~34 (2021), No.~5, 2837--2877.}
\bibitem{ClaeysGirottiStivigny}{Claeys, T.; Girotti, M.; Stivigny, D. Large gap asymptotics at the hard edge for product random matrices and Muttalib-Borodin ensembles. Int. Math. Res. Not. IMRN 2019, no.~9, 2800--2847.}
\bibitem{CoifmanMcIntoshMeyer}{Coifman, R. R.; McIntosh, A.; Meyer, Y. L'int\'{e}grale de Cauchy d\'{e}finit un op\'{e}rateur born\'{e} sur L2 pour les courbes lipschitziennes.  Ann. of Math. (2) 116 (1982), no. 2, 361--387}.
\bibitem{Deift}{Deift, P. Integrable operators. Differential operators and spectral theory, 69–84, Amer. Math. Soc. Transl. Ser. 2, 189, Adv. Math. Sci., 41, Amer. Math. Soc., Providence, RI, 1999.}
\bibitem{Deift_book}{Deift, P. Orthogonal Polynomials and Random Matrices: A Riemann--Hilbert Approach. Courant Lecture Notes, Vol.~3 (2000), 261~p.}
\bibitem{DeiftItsKrasovsky}{Deift, P.; Its, A.; Krasovsky, I. Asymptotics of the Airy-kernel determinant. Comm. Math. Phys.~278 (2008), no.~3, 643--678.}
\bibitem{DeiftKrasovskyVasilevska}{Deift, P.; Krasovsky, I.; Vasilevska, J. Asymptotics for a determinant with a confluent hypergeometric kernel. Int. Math. Res. Not. IMRN 2011, no.~9, 2117--2160.}
\bibitem{DeiftZhou}{Deift, P.; Zhou, X. A steepest descent method for oscillatory Riemann--Hilbert problems. Bull. Amer. Math. Soc. (N.S.) 26 (1992), no.~1, 119--123.}
\bibitem{FokasItsKapaevNovokshenov}{Fokas A. S.; Its A. R.; Kapaev A. A.; Novokshenov V. Yu. Painlev\'{e} Transcendents: The Riemann-Hilbert Approach. Mathematical Surveys and Monographs. Vol.~128 (2006).}
\bibitem{Forrester}{Forrester, P. J. Log-Gases and Random Matrices, Princeton University Press, 2010.}
\bibitem{ForresterWitteI}{Forrester, P. J.; Witte, N. S. Application of the $\tau$-function theory of Painlev\'{e} equations to random matrices: PIV, PII and the GUE. Comm. Math. Phys.~219 (2001), no.~2, 357--398.}
\bibitem{ForresterWitteII}{Forrester, P. J.; Witte, N. S. Application of the $\tau$-function theory of Painlev\'{e} equations to random matrices: PV, PIII, the LUE, JUE, and CUE. Comm. Pure Appl. Math.~55 (2002), no.~6, 679--727.}
\bibitem{Girotti1}{Girotti, M. Asymptotics of the tacnode process: a transition between the gap probabilities from the tacnode to the Airy process. Nonlinearity~27 (2014), no. 8, 1937--1968.}
\bibitem{Girotti2}{Girotti, M. Gap Probabilities for the Generalized Bessel Process: A Riemann-Hilbert Approach. Mathematical Physics, Analysis and Geometry~17 (2014), 103--121.}
\bibitem{Girotti3}{Girotti, M. Riemann-Hilbert approach to gap probabilities for the Bessel process. Phys. D 295/296 (2015), 103--121.}
\bibitem{Gohberg}{Gohberg, I; Goldberg, S.; Krupnik, N. Traces and Determinants of linear Operators, Birkh\"{a}auser, Basel, 2000.}
\bibitem{Its}{Its, A. R. Painlev\'{e} transcendents. The Oxford handbook of random matrix theory, 176--197, Oxford Univ. Press, Oxford, 2011.}
\bibitem{ItsIzerginKorepinSlavnov}{Its, A. R.;  Izergin, A. G.;  Korepin, V. E.;  Slavnov, N. A. Differential equations for quantum correlation functions, Internat. J. Modern Phys. B 4 (1990), 1003--1037.}
\bibitem{JimboMiwaMoriSato}{Jimbo, M.;  Miwa, T.;  M\^{o}ri, Y.;  Sato, M. Density matrix of an impenetrable bose gas and the fifth Painlev\'{e} transcendent. Physica D  1 (1980), 80--158.}
\bibitem{KuijlaarsZhang}{Kuijlaars, A. B. J.; Zhang, L. Singular values of products of Ginibre random matrices, multiple orthogonal polynomials and hard edge scaling limits. Comm. Math. Phys.~332 (2014), no.~2, 759--781.}
\bibitem{Krasovsky}{Krasovsky, I. V. Gap probability in the spectrum of random matrices and asymptotics of polynomials orthogonal on an arc of the unit circle. Int. Math. Res. Not.~2004, no.~25, 1249--1272.}
\bibitem{LiuWangWang}{Liu, D.-Zh.; Wang, D.;  Wang, Y.  Lyapunov exponent, universality and phase transition for products of random matrices.  arXiv:1810.00433.}  
\bibitem{LiuWangZhang}{Liu, D.-Zh.; Wang, D.; Zhang, L. Bulk and soft-edge universality for singular values of products of Ginibre random matrices. Annales de l'Institut Henri Poincar\'{e}, Probabilit\'{e}s et Statistiques, Ann. Inst. H. Poincar\'{e} Probab. Statist. 52 (2016), No.~4, 1734--1762.}
\bibitem{MangazeevForrester}{Mangazeev, V. V.; Forrester, P. J. Integrable structure of products of finite complex Ginibre random matrices. Phys. D 384/385 (2018), 39--63.}
\bibitem{DLMF}{NIST Digital Library of Mathematical Functions. http://dlmf.nist.gov/, Release 1.1.1 of 2021-03-15.}
\bibitem{ReedSimon}{Reed, M.; Simon, B. Methods of modern mathematical physics. I. Functional analysis. Second edition. Academic Press, Inc. [Harcourt Brace Jovanovich, Publishers], New York, 1980.}
\bibitem{Strahov}{Strahov, E. Differential equations for singular values of products of Ginibre random matrices. J. Phys. A 47 (2014), no.~32, 325203.}
\bibitem{TracyWidomIII}{Tracy, C. A.; Widom, H. Fredholm determinants, differential equations and matrix models. Comm. Math. Phys.~163 (1994), no.~1, 33--72.}
\bibitem{TracyWidomI}{Tracy, C. A.; Widom, H. Level-spacing distributions and the Airy kernel. Comm. Math. Phys. 159 (1994), no.~1, 151--174.}
\bibitem{TracyWidomII}{Tracy, C. A.; Widom, H. Level-spacing distributions and the Bessel kernel. Comm. Math. Phys. 161 (1994), no.~2, 289--309.}
\bibitem{TracyWidomIV}{Tracy, C. A.; Widom, H. On orthogonal and symplectic matrix ensembles. Comm. Math. Phys.~177 (1996), no.~3, 727--754.}
\bibitem{WitteForrester}{Witte, N. S.; Forrester, P. J. Singular values of products of Ginibre random matrices. Stud. Appl. Math. 138 (2017), no.~2, 135--184.}
\bibitem{Zhang}{Zhang, L. On Wright's generalized Bessel kernel. Phys. D 340 (2017), 27--39.}
\end{thebibliography}
\end{document}